\numberwithin{equation}{section}
\theoremstyle{plain}
\newtheorem{theorem}{Theorem}[section]
\newtheorem{prop}[theorem]{Proposition}
\newtheorem{lem}[theorem]{Lemma}
\newtheorem{cor}[theorem]{Corollary}
\theoremstyle{definition}
\newtheorem{dfn}[theorem]{Definition}
\newtheorem{example}[theorem]{Example}
\newtheorem{rem}[theorem]{Remark}
\newcommand{\emme}{{\scriptscriptstyle{M}}}
 \newcommand{\N}{{\mathbb{N}}}
 \newcommand{\K}{{\mathbb{K}}}
\newcommand{\gd}{\delta} 
\newcommand{\gD}{\Delta} 
\newcommand{\gve}{\varepsilon} 
\newcommand{\gvf}{\varphi}
\newcommand{\gs}{\sigma} 
\newcommand{\gS}{\Sigma}
\newcommand{\cO}{{\mathcal O}}
\newcommand{\Hom}{\operatorname{Hom}}
\newcommand{\ad}{{\operatorname{ad}}}
\newcommand{\coad}{{\operatorname{{coad}}}}
\newcommand{\alg}{{\operatorname{{alg}}}}
\newcommand{\coalg}{{\operatorname{{coalg}}}}
\newcommand{\Tor}{{\rm Tor}}
\newcommand{\Ext}{{\rm Ext}}
\newcommand{\id}{{\rm id}}
\newcommand{\Tot}{{\rm Tot}\,}
\newcommand{\pl}{\partial}
\newcommand{\rmref}[1]{{\rm (}\ref{#1}{\rm )}}
\newcommand{{\Hl}}{{H^{\ell}}} 
\newcommand{{\mHop}}{{m_{H^{\rm op}}}} 
\newcommand{{\Hop}}{{H^{\rm op}}} 
\newcommand{{\mUop}}{{m_{U^{\rm op}}}} 
\newcommand{{\mUopp}}{{m_{\scriptscriptstyle{U^{\rm op}}}}} 
\newcommand{{\Uop}}{{U^{\rm op}}}
\newcommand{{\mVop}}{{m_{V^{\rm op}}}} 
\newcommand{{\Vop}}{{V^{\rm op}}}  
\newcommand{{\Ae}}{{A^{\rm e}}}
\newcommand{{\Be}}{{B^{\rm e}}}
\newcommand{{\Ue}}{{U^{\rm e}}}
\newcommand{{\He}}{{H^{\rm e}}}
\newcommand{{\Aop}}{{A^{\rm op}}}
\newcommand{{\Aope}}{({A^{\rm op}})^{\rm e}}
\newcommand{{\Aopl}}{{A^{\rm op}_\pl}}
\newcommand{{\Bop}}{{B^{\rm op}}}
\newcommand{{\Bopp}}{{\scriptscriptstyle{{B^{\rm op}}}}}
\newcommand{{\Bope}}{({B^{\rm op}})^{\rm e}}
\newcommand{{\Bpl}}{{B_\pl}}
\newcommand{{\op}}{{{\rm op}}}
\newcommand{{\coop}}{{{\rm coop}}}
\newcommand{{\sop}}{{*^{\rm op}}}
\newcommand{{\co}}{{{\rm co}}}
\newcommand{{\diag}}{{{\rm diag}}}
\newcommand{\kmod}{\K\mbox{-}\mathbf{Mod}}                     %
\newcommand{\hmod}{H\mbox{-}\mathbf{Mod}}                     %
\newcommand{\comodh}{\mathbf{Comod}\mbox{-}H}
\newcommand{\lact}{\smalltriangleright}                  
\newcommand{\ract}{\smalltriangleleft}
\newcommand{{\gog}}{{G \rightrightarrows G_0}}
\newcommand{{\rra}}{\rightrightarrows}
\newcommand{{\lra}}{\ \longrightarrow \ }
\newcommand{{\lla}}{\ \longleftarrow \ }
\newcommand{{\lma}}{\ \longmapsto \ }
\newcommand{{\bull}}{{\scriptscriptstyle{\bullet}}}
\newcommand{{\qqquad}}{{\quad\quad\quad}}
\newsavebox{\foobox}
\newcommand{\blue}[1]{{\color{blue}{#1}}}
\begin{document}

\title{Cyclic Gerstenhaber-Schack cohomology} 

\author{Domenico Fiorenza}
\author{Niels Kowalzig}

\begin{abstract}
We show that the diagonal complex computing the Gerstenhaber-Schack cohomology of a bialgebra (that is, the cohomology theory governing bialgebra deformations) can be given the structure of an operad with multiplication if the bialgebra is a (not necessarily finite dimensional) Hopf algebra with invertible antipode; if the antipode is involutive, the operad is even cyclic. 
Therefore, the Gerstenhaber-Schack cohomology of any such Hopf algebra carries a Gerstenhaber resp.\ Batalin-Vilkoviski\u\i\ algebra structure; in particular, one obtains a cup product and a cyclic boundary $B$ that generate the Gerstenhaber bracket, and that allows to define cyclic Gerstenhaber-Schack cohomology. 
In case the Hopf algebra in question is finite dimensional, the Gerstenhaber bracket turns out to be zero in cohomology and hence the interesting structure is not given by this $e_2$-algebra structure but rather by the resulting $e_3$-algebra structure, which is expressed in terms of the cup product and $B$. 
\end{abstract}

\address{Dipartimento di Matematica, Universit\`a degli Studi di Roma La
Sapienza, P.le Aldo Moro 5, 00185 Roma, Italia}
\email{fiorenza@mat.uniroma1.it}

\address{Dipartimento di Matematica, Universit\`a degli Studi di Napoli Federico II, Via Cintia, 80126 Napoli, Italia}
\email{niels.kowalzig@unina.it}

\keywords{Gerstenhaber-Schack cohomology, bialgebra deformations, Hopf algebras, cyclic homology, operads, Gerstenhaber algebras, Batalin-Vilkoviski\u\i\ algebras}

\subjclass[2010]{
{18D50, 16E40, 19D55, 16T05, 18G60.}
}

\maketitle

\setcounter{tocdepth}{1}
\tableofcontents

\section{Introduction}
\subsection{Aims and objectives}
\label{firstofall}
Much as Hochschild cohomology for an associative algebra characterises deformations of the product structure \cite{Ger:TCSOAAR}, Gerstenhaber-Schack cohomology characterises bialgebra deformations \cite{GerSch:ABQGAAD}, that is, it occurs when one wants to simultaneously deform both the multiplication as well as the comultiplication, maintaining the compatibility between the two of them. More precisely, for $H$ a bialgebra over a field of characteristic zero $\K$ (assumption that can actually be relaxed), one defines the Gerstenhaber-Schack bicomplex of $H$ as $C^{pq}_{GS}(H,H)=\Hom_\K(H^{\otimes p},H^{\otimes q})$ in degree $(p,q)$, the columns of which are given by the Hochschild cochain complex which uses products and actions and the rows by the coHochschild (or {\em Cartier}) cochain complex which, in turn, uses coproducts and coactions. The \emph{Gerstenhaber-Schack cohomology} of $H$, denoted $H^\bullet_{GS}(H,H)$, is the cohomology of the total complex associated to this bicomplex. As $C^{\bullet\bullet}_{GS}(H,H)$ is a bicomplex associated to a bi-cosimplicial $\K$-module via the Dold-Kan correspondence, by the Dold-Puppe generalisation of the Eilenberg-Zilber theorem the cohomology of the total complex can be computed by the diagonal complex $(C^\bullet_{\diag}(H,H),\delta^\diag)$, where $C^n_{\diag}(H,H) = \Hom_\K(H^{\otimes n}, H^{\otimes n})$. 

A classical result tells that Hochschild cohomology carries a higher structure in the form of a Gerstenhaber algebra \cite{Ger:TCSOAAR}, that is, a Lie bracket of degree $-1$ plus a graded commutative cup product that are compatible in a graded Leibniz sense. By the above deformation analogy, it is therefore natural to ask whether such a structure also exists on Gerstenhaber-Schack cohomology, a question 
which
has been raised several times in the literature before (see, for example,  \cite{FarSol:GSOTCOHA, GerSch:ABQGAAD, Tai:IHBCOIDHAAGCOTYP}, and possibly others) but apparently so far no construction has been found that gives a nontrivial bracket.

In this article, we show that when $H$ is a Hopf algebra, the collection of $\K$-modules $C^\bullet_{\diag}(H,H)$ carries a natural structure of an operad with multiplication whose associated Hochschild-type 
complex (that is, originating from its cosimplicial structure) is the diagonal complex mentioned above. By a classical result \cite{GerSch:ABQGAAD, McCSmi:ASODHCC}, this implies in particular that the cohomology of $(C^\bullet_{\diag}(H,H),\delta^\diag)$ (hence, the Gerstenhaber-Schack cohomology of $H$) carries the structure of a Gerstenhaber algebra. This bracket is in general nonzero: for instance, if $H=\mathcal{U}_{\mathfrak{g}}$, the universal enveloping algebra of a Lie algebra $\mathfrak{g}$, the Gerstenhaber bracket between two $1$-cocycles $f$ and $g$ reads
\begin{equation}
\label{dayne}
\{f,g\}(XY)=[g(X),f(Y)]-[f(X),g(Y)], \qquad \forall X,Y\in \mathfrak{g},
\end{equation}
while there are no nonzero $1$-coboundaries.
If moreover the antipode of $H$ is involutive (or, somewhat weaker, the Hopf algebra is endowed with a modular pair in involution), then $C^\bullet_{\diag}(H,H)$ is actually a cyclic operad with multiplication, and hence the Gerstenhaber algebra structure on its cohomology is part of a 
Batalin-Vilkoviski\u\i\
(BV) algebra structure as then automatically follows
by a well-known result of Menichi \cite[Thm.~1.4]{Men:BVAACCOHA}.

On the other hand, when $H$ is finite dimensional, the aforementioned Gerstenhaber bracket on $H^\bullet_{GS}(H,H)$ vanishes. Although the respective Gerstenhaber brackets are constructed in quite a different manner, this fact somehow mirrors an analogous observation made by Taillefer 
\cite[Appendix A]{Tai:IHBCOIDHAAGCOTYP}. As a consequence, by Theorem 5.7 in \cite{FioKow:HBOCANCC},  
one obtains an induced $e_3$-algebra structure on Gerstenhaber-Schack cohomology. 

The different behaviour of the Gerstenhaber bracket in the finite dimensional and infinite dimensional case may sound surprising, but it actually has a transparent explanation in terms of Hopf algebra theory: a finite dimensional Hopf algebra exhibits no nonzero primitive elements. The  $e_3$-algebra structure constructed this way on the Gerstenhaber-Schack cohomology of a finite dimensional Hopf algebra is presumably different from the 
ones obtained by higher categorical methods in \cite{GinYal:DTOBHHCAF, Sho:DGCADC, Sho:TOABFATMC} as the latter do not rely on the finite dimensionality of $H$. Nevertheless, even our approach does not seem to work if $H$ is merely a bialgebra, that is, without an antipode, the necessity of which was conjectured by Shoikhet in his approach \cite[p.~9]{Sho:TOABFATMC}.

In the general bialgebra case, one gets an affirmative answer to the existence of Gerstenhaber algebra structures on Gerstenhaber-Schack cohomologies by considering a variant of the latter; namely, by replacing $C^{\bullet\bullet}_{GS}(H,H)$ with the $(1,1)$-shifted double complex $\overline{C}^{\bullet\bullet}_{GS}(H,H)$
having $\Hom_\K(H^{\otimes p},H^{\otimes q})$ in bidegree $(p-1,q-1)$, for $p,q\geq 1$. 
The question whether this (bi-)shifted bicomplex 
has a Gerstenhaber-like bracket $[-,-]$ such that $m + \Delta$ satisfies the Maurer-Cartan equation $[m + \Delta, m + \Delta] = 0$ if and only if $(m, \Delta)$ is a bialgebra structure on $H$ and such that the differential of the total complex of $\overline{C}^{\bullet\bullet}_{GS}(H,H)$ is given by $[m + \Delta,-]$, has been solved by Markl \cite{Mar:Intrinsic} by setting the problem in the more general and flexible context of $\mathrm{Lie}_\infty$-algebras.  More precisely, $\overline{C}^{\bullet\bullet}_{GS}(H,H)$ is endowed in {\em op.~cit.} with a $\mathrm{Lie}_\infty$-algebra structure such that the Lie bracket $[-,-]$ almost governs the bialgebra structure: the quadratic equation $[m + \Delta, m + \Delta] = 0$ recovers both the associativity of the multiplication $m$ and the coassociativity of the comultiplication $\Delta$, but not the compatibility between them, 
which, however, is read off by the ``quantum Maurer-Cartan equation'' involving the higher Lie brackets in the $\mathrm{Lie}_\infty$-algebra structure on $\overline{C}^{\bullet\bullet}_{GS}(H,H)$.

\subsection{Main results}
We refer to the main text for notation, terminology, and all details used in this introductory presentation.

In \S\ref{steckdose}, we obtain a cocyclic structure on the diagonal complex by considering 
 in the Gerstenhaber-Schack bicomplex
first the columns, then the rows, and finally restricting to the diagonal. The somewhat surprising observation is here that both columns and rows, that is, the Hochschild resp.\ coHochschild (Cartier) cochain complexes are in general only para-cocyclic
(that is, the cyclic operator does not power to the identity after $n+1$ steps), whereas this restriction disappears on the diagonal. More precisely, the Hochschild (and likewise the coHochschild) cochain complex for a $\K$-algebra usually does {\em not} admit a cyclic structure (see \cite[Thm.~1.3]{Kow:WEIABVA} for a sufficiency criterion) but the presence of additional structure maps in the Hopf case does allow at least for a para-cocyclic operator $\tau_v$ (resp.\ $\tau_h$), which only in case of cocommutativity of the underlying coalgebra becomes truly cocyclic. 
This way, one obtains the structure of a bi-para-cocyclic $\K$-module on the Gerstenhaber-Schack bicomplex. What is more, a direct computation yields the additional property
\begin{equation}
\label{khatiabuniatishvili}
\tau_h^{n+1} \circ \tau_v^{n+1} =  \tau_v^{n+1} \circ \tau_h^{n+1} = \id
\end{equation}
in degree $n$, which does {\em not} require cocommutativity any more but only 
the antipode to be involutive (or rather equipped with a modular pair in involution).
A bi-para-cocyclic $\K$-module that fulfils \rmref{khatiabuniatishvili} is called {\em cylindrical} 
in \cite[p.~164]{GetJon:TCHOCPA}. Defining then
$$
\tau_\diag  := \tau_\alg \circ \tau_\coalg = \tau_\coalg \circ \tau_\alg
$$
obviously yields a truly cocyclic operator on the diagonal cochain complex and enables us to prove in Theorem \ref{backspace}:

{\renewcommand{\thetheorem}{{A}}
\begin{theorem}
\label{A}
For any Hopf algebra over a field with involutive antipode, the Gerstenhaber-Schack double complex defines a cylindrical $\K$-module and its associated diagonal complex a cocyclic $\K$-module.
\end{theorem}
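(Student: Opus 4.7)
The plan is to exhibit explicit para-cocyclic operators $\tau_v$ on the columns and $\tau_h$ on the rows of the Gerstenhaber-Schack bicomplex $C^{\bullet\bullet}_{GS}(H,H)$, verify the bi-cosimplicial and bi-para-cocyclic axioms, establish the cylindrical identity \rmref{khatiabuniatishvili}, and then restrict to the diagonal with $\tau_\diag := \tau_v \circ \tau_h$ to obtain a genuine cocyclic structure.

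First I would fix a column $q$ and describe the Hochschild-type para-cocyclic operator $\tau_v$ on $C^{\bullet,q}_{GS}(H,H) = \Hom_\K(H^{\otimes \bullet}, H^{\otimes q})$. Here $H^{\otimes q}$ is endowed with the obvious left $H$-action via the iterated coproduct and multiplication. For $f \in \Hom_\K(H^{\otimes p}, H^{\otimes q})$ and using Sweedler notation, $\tau_v f$ should be defined of Connes--Moscovici type: it cyclically shifts the domain slots and uses the antipode to recycle the discarded slot as a left action on the codomain, so that one recovers the cyclic module of Connes--Moscovici in the special case $q=1$. Dually, fixing a row $p$, one defines the Cartier-type para-cocyclic operator $\tau_h$ on $C^{p,\bullet}_{GS}(H,H)$ by cyclically permuting codomain slots and dualising: the ``overflowing'' slot is absorbed as a left coaction on the domain, again using the antipode. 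Verifying the para-cocyclic axioms (compatibility with coface and codegeneracy operators in each direction separately) is then a routine but lengthy Hopf-algebraic bookkeeping; the failure of $\tau_v^{p+1}=\id$ on $\Hom_\K(H^{\otimes p}, H^{\otimes q})$ and of $\tau_h^{q+1}=\id$ arises, as customary, from an $S^2$-twist on the output tensors, which is the only obstruction to true cocyclicity at the level of separate columns or rows.

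Next I would verify the bi-cosimplicial compatibilities between the vertical and horizontal directions. Since the horizontal cofaces and codegeneracies act only on the codomain $H^{\otimes q}$ (via coproducts and counits) while the vertical ones act only on the domain $H^{\otimes p}$ (via products and units), the two families commute tautologically, and one checks analogously that $\tau_v$ and $\tau_h$ commute with the coface and codegeneracy operators of the opposite direction. This upgrades the collection $C^{\bullet\bullet}_{GS}(H,H)$ to a bi-para-cocyclic $\K$-module in the sense of \cite{GetJon:TCHOCPA}.

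The main obstacle and the decisive step is the cylindrical identity $\tau_h^{n+1}\circ\tau_v^{n+1} = \tau_v^{n+1}\circ\tau_h^{n+1} = \id$ on $C^{n,n}_{GS}(H,H)$. The plan is to compute both iterated operators explicitly. Each application of $\tau_v$ shifts domain slots and contributes one factor of $S$ to the adjoint-type action on the codomain; after $n+1$ iterations every domain slot returns to its original position, but each codomain tensorand has been acted upon by a product which, when the dust settles and Hopf axioms are used, collapses to an application of $S^2$ on each codomain factor. Dually, $\tau_h^{n+1}$ produces an action of $S^{-2}$ (equivalently, $S^2$ on the opposite side) on each codomain factor. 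When the antipode is involutive both twists are trivial, and their composition is the identity; the computation in the opposite order $\tau_v^{n+1}\circ\tau_h^{n+1}$ is symmetric. The variant invoking a modular pair in involution is obtained by the same mechanism, the modular pair providing precisely the datum needed to twist the antipode into an involutive one inside the formulas.

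Finally, setting $\tau_\diag := \tau_v \circ \tau_h$ on the diagonal $C^n_\diag(H,H) = \Hom_\K(H^{\otimes n}, H^{\otimes n})$, the commutation $\tau_v\tau_h = \tau_h\tau_v$ on the diagonal plus the cylindrical identity immediately yield $\tau_\diag^{n+1} = \tau_v^{n+1}\tau_h^{n+1} = \id$, while the remaining cocyclic identities descend from the bi-cosimplicial structure. This equips $C^\bullet_\diag(H,H)$ with the desired cocyclic $\K$-module structure.
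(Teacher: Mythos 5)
Your overall architecture (para-cocyclic operators on columns and rows, bi-para-cocyclic compatibilities, cylindricity, then $\tau_\diag$ on the diagonal) matches the paper, but the decisive step --- the cylindrical identity --- is argued by a mechanism that is not correct. You claim that the failure of $\tau_v^{p+1}=\id$ on a column (and dually of $\tau_h^{q+1}=\id$ on a row) is ``only an $S^2$-twist on the output tensors'', so that involutivity of the antipode already trivialises each of $\tau_v^{n+1}$ and $\tau_h^{n+1}$ separately, and cylindricity follows by composing two identities. That is not what happens: the explicit computations \rmref{pausenscheibe} and \rmref{schonwiederkrank} show that $\tau_\alg^{n+1}$ and $\tau_\coalg^{n+1}$ are each given by an $S^{\mp2}$-twist \emph{conjugated} by diagonal actions of coproduct legs of the inputs, and this conjugation defect does not disappear when $S^2=\id$; it disappears only under cocommutativity (this is precisely why Propositions \plref{columns} and \plref{rows} assert cocyclicity of the separate columns and rows only for cocommutative $H$ --- consider, e.g., the commutative, non-cocommutative Hopf algebra $\K^G$ for a nonabelian finite group $G$, which has $S^2=\id$). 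So your intermediate claim, if true, would prove more than the paper does, and it is false in general.

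The actual content of the theorem is that these two conjugation defects cancel \emph{against each other} when one composes $\tau_\coalg^{n+1}$ with $\tau_\alg^{n+1}$: in the paper's computation the surviving legs collapse via the identities $S^2(u_{(2)})S(u_{(1)})=S(u_{(2)})S^2(u_{(1)})=\gve(u)$, leaving
\begin{equation*}
(\tau_\coalg^{n+1}\tau_\alg^{n+1}f)(u^1,\ldots,u^n)=(S^2\otimes\cdots\otimes S^2)\,f\big(S^{-2}(u^1),\ldots,S^{-2}(u^n)\big),
\end{equation*}
which is the identity exactly when $S^2=\id$ (and the bracketing with a modular pair in involution enters at this point, not slotwise as you suggest). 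Without carrying out this composite computation --- or an equivalent argument showing the cancellation of the two defects --- the cylindricity, and hence $\tau_\diag^{n+1}=\id$, is not established. A smaller but related gap: the commutation of the vertical and horizontal structures is not ``tautological'', since the vertical cofaces act on the codomain through the diagonal action (see $\gd^v_0$ and $\gd^v_{n+1}$ in \rmref{sosoaha}) and the horizontal ones act on the domain through coproducts (see $\gd^h_0$ and $\gd^h_{n+1}$ in \rmref{susuaha}); these compatibilities, as well as $\tau_\alg\tau_\coalg=\tau_\coalg\tau_\alg$ needed to make $\tau_\diag$ well defined, require genuine (if routine) verification.
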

}

This allows to define  {\em cyclic Gerstenhaber-Schack cohomology} as in Remark \ref{cyclicgs}: the diagonal of the Gerstenhaber-Schack bicomplex can be made into a mixed complex the standard way, the total cohomology of which gives cyclic cohomology, and in particular an operator $B: H^{\bullet}_{GS}(H,H) \to H^{\bullet-1}_{GS}(H,H)$ that squares to zero.

In \S\ref{waldkonzert}, we shall approach the Gerstenhaber-Schack diagonal complex from 
the operadic point of view, which as mentioned in \S\ref{firstofall} by a classical construction directly leads to higher structures in cohomology. 
More precisely, in Theorem \ref{japonvuduciel1} and Corollary \ref{fegato}, we show:

{\renewcommand{\thetheorem}{{B}}
\begin{theorem}
\label{B}
%
%
%
For a not necessarily finite dimensional Hopf algebra with invertible antipode, 
the diagonal Gerstenhaber-Schack complex carries the structure of an operad with multiplication, which induces the structure of a Gerstenhaber algebra on Gerstenhaber-Schack (or deformation) cohomology. 
\end{theorem}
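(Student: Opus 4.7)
The plan is to define partial composition maps
$\circ_i \colon \mathcal{O}^p \otimes \mathcal{O}^q \to \mathcal{O}^{p+q-1}$
(for $1 \le i \le p$) and a distinguished multiplication element $\mu \in \mathcal{O}^2$ on the graded $\K$-module
$\mathcal{O}^\bullet := C^\bullet_{\diag}(H,H) = \Hom_\K(H^{\otimes\bullet},H^{\otimes\bullet})$,
turning it into an operad with multiplication in the sense of McClure-Smith. Once this is done, the Gerstenhaber algebra structure on the cohomology of the associated Hochschild-type cosimplicial complex is automatic by \cite{McCSmi:ASODHCC}, and it only remains to identify that cosimplicial complex with the diagonal Gerstenhaber-Schack complex.

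The partial composition $f \circ_i g$ is designed so as to treat the arguments of a cochain as tensors over the algebra $H$ and its values as tensors over the coalgebra $H$: from the $p+q-1$ input arguments of $f \circ_i g$ one produces, by iterated coproducts, both (i) $q$ elements that feed $g$ and (ii) a second copy to be interleaved later with the outputs of $f$; the $q$ outputs of $g$ are then collapsed by iterated multiplication into the $i$-th slot of $f$, and the $p$ outputs of $f$ are interleaved with the deposited copies of the $g$-outputs to produce a $(p+q-1)$-tensor in $H^{\otimes p+q-1}$. The invertibility of the antipode enters precisely here, since the interleaved $g$-copies must be twisted by $S$ or $S^{-1}$ according to which side of the corresponding $f$-output they land on, so that the Hopf-algebraic covariance required by the operad axioms is preserved.

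The principal technical obstacle is the verification of the pre-Lie associativity relations satisfied by the $\circ_i$: this is a lengthy but essentially combinatorial Sweedler-notation calculation relying on coassociativity and associativity, on the bialgebra compatibility between $m$ and $\Delta$, and on the antipode identities $m(S \otimes \id)\Delta = \eta\varepsilon = m(\id \otimes S)\Delta$ together with their $S^{-1}$-analogues; the main point to track is that the spurious factors produced by ``opening and closing'' coproducts telescope to $\eta\varepsilon$ and disappear.  Once this is settled, the natural candidate for the multiplication is $\mu(h_1 \otimes h_2) := \Delta(h_1 h_2) = h_1^{(1)} h_2^{(1)} \otimes h_1^{(2)} h_2^{(2)}$, and the identity $\mu \circ_1 \mu = \mu \circ_2 \mu$ then reduces to the associativity of $m$ combined with the bialgebra compatibility.

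A final direct computation identifies the induced Hochschild-type coboundary
\begin{equation*}
\delta f \;=\; \mu \circ_1 f \;+\; \sum_{i=1}^n (-1)^i\, f \circ_i \mu \;+\; (-1)^{n+1}\, \mu \circ_2 f
\end{equation*}
with the diagonal Gerstenhaber-Schack differential $\delta^\diag$, via the Alexander-Whitney map implicit in the Dold-Puppe version of the Eilenberg-Zilber theorem invoked in \S\ref{firstofall}; this concludes the proof of Theorem \ref{B} and yields the desired Gerstenhaber algebra structure on $H^\bullet_{GS}(H,H)$.
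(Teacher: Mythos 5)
There is a genuine gap: your text is a programme, not a proof, and the one place where it becomes specific it describes a composition that is not the one that works. You never write down the partial composition $\circ_i$; the verbal recipe you give --- feed $g$ with (copies of) the inputs, \emph{collapse the $q$ outputs of $g$ by iterated multiplication into the $i$-th slot of $f$}, then interleave the outputs of $f$ with the deposited copies --- is essentially the naive mixing of the endomorphism and coendomorphism operads, which the paper explicitly points out does \emph{not} produce an operad. In the composition that does work, see \rmref{aglio}, what occupies the $i$-th argument of $f$ is the \emph{product of Sweedler legs of the input arguments} $u^i_{(i+2)}\cdots u^{i+q-1}_{(i+2)}$ (not any image under $g$); the output of $g$ survives as an untouched $q$-tensor block of the final $(p+q-1)$-tensor, placed under a left diagonal action by $S^{-1}$ of a product of input legs and a right diagonal action by the remaining inputs, and is multiplied factorwise against the $\Delta^{q-1}$-expansion of the $i$-th output leg of $f$. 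Your proposed operation, taken literally, is a different cochain-level map, and nothing in your argument shows it satisfies the pre-Lie/associativity relations; finding \emph{some} formula for which these relations hold is precisely the content of the theorem, so it cannot be discharged by asserting that ``spurious factors telescope to $\eta\varepsilon$.'' In the paper this verification is a delicate multi-page Sweedler computation (Appendix \ref{panic}), not a routine cancellation, and the check that $\mu(u,v)=\Delta(uv)$ is an associative multiplication element also uses the specific form of \rmref{aglio}, not merely associativity plus bialgebra compatibility.

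Two smaller points. First, the identification of the operadically induced coboundary with the Gerstenhaber--Schack differential does not go through any Alexander--Whitney map: once \rmref{aglio} is in place, the induced cofaces $\mu\circ_2 f$, $f\circ_i\mu$, $\mu\circ_1 f$ are computed directly and coincide with the cofaces \rmref{trockenfirma} of the diagonal complex (the Dold--Puppe/Eilenberg--Zilber theorem was already used, earlier and independently, to replace the total complex by the diagonal one). Second, your remark that the antipode enters ``to twist the interleaved copies according to which side they land on'' gestures at the right phenomenon but does not determine the formula; in particular the invertibility of $S$ is needed because the single twist $S^{-1}(u^i_{(i+1)}\cdots u^{p+q-1}_{(i+1)})\lact(-)$ acts diagonally on the whole block containing $g$'s output, and getting this (rather than some $S$-variant) wrong breaks the associativity computation. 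As it stands, the proposal would need the explicit composition law and the full verification of the operad axioms before the McClure--Smith machinery can be invoked.
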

}

The (unfortunately too naive) idea that the operadic structure deployed in this theorem can be simply obtained by intertwining the classical ones for the endomorphism resp.\ coendomorphism operad of the first column resp.\ row in \rmref{tauschetasche}, possibly generalised to more general coefficients in the higher columns resp.\ rows, does not work: the precise composition law requires the existence of an antipode and hence, whereas the Gerstenhaber-Schack bicomplex can be perfectly defined if the underlying object is merely a $\K$-bialgebra, this does not seem to be the case for the operadic structure on the diagonal complex.

As a pleasant surprise, the cocyclic operator $\tau_\diag$ turns out to be compatible with the operadic structure in Theorem \ref{B} as well as with the multiplication element, which leads to the following result 
in Theorem \ref{japonvuduciel2} and Corollary \ref{nikolaus}:

{\renewcommand{\thetheorem}{{C}}
\begin{theorem}
\label{C}
For a not necessarily finite dimensional Hopf algebra with involutive antipode (or with a modular pair in involution), 
the diagonal Gerstenhaber-Schack complex carries the structure of a cyclic operad with multiplication which induces the structure of a BV algebra on Gerstenhaber-Schack cohomology. 
\end{theorem}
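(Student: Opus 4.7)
The plan is to combine Theorems A and B above: Theorem A provides the cocyclic operator $\tau_\diag$ on the diagonal complex $C^\bullet_\diag(H,H)$ under the involutivity hypothesis on the antipode, while Theorem B equips this very complex with the structure of an operad with multiplication $(\circ_i,\mu)$. The strategy is to verify directly that these two structures are compatible in the sense of a cyclic operad in the style of Getzler--Kapranov, after which Menichi's theorem \cite[Thm.~1.4]{Men:BVAACCOHA} immediately yields the BV algebra structure on cohomology.

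First I would recall the explicit form of the operadic composition $\circ_i \colon C^p_\diag(H,H) \otimes C^q_\diag(H,H) \to C^{p+q-1}_\diag(H,H)$ constructed in the proof of Theorem B, together with the explicit form of the multiplication element $\mu \in C^2_\diag(H,H)$. Since this composition is built by intertwining the Hochschild and coHochschild operadic compositions through the antipode, and since by Theorem A one has $\tau_\diag = \tau_\alg \circ \tau_\coalg = \tau_\coalg \circ \tau_\alg$ as the product of the two para-cocyclic operators coming from the bi-cosimplicial structure, the cyclic operad compatibility relations should factor into a ``vertical'' check against $\tau_\alg$ and a ``horizontal'' check against $\tau_\coalg$. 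These two checks can be handled separately and then recombined by means of the cylindricity identity \rmref{khatiabuniatishvili}.

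More concretely, the main computational steps consist in verifying the cyclic operad axioms $\tau_\diag(f \circ_i g) = \tau_\diag(f) \circ_{i-1} g$ for $1 \leq i \leq p-1$, the outer axiom $\tau_\diag(f \circ_0 g) = \tau_\diag(g) \circ_{p+q-1} \tau_\diag(f)$, together with the invariance $\tau_\diag(\mu) = \mu$ of the multiplication element. Each of these is a purely Hopf-algebraic identity: expanding $\tau_\diag$ and the action of $\circ_i$ produces expressions involving the antipode, the multiplication, and the comultiplication in Sweedler notation, and the hypothesis $S^2 = \id$ (or, more generally, the presence of a modular pair in involution) is exactly what is needed to match the two sides, very much in the spirit of the argument used for Theorem A.

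The main obstacle, as is typically the case in such Hopf-algebraic combinatorial arguments, will lie in the bookkeeping of Sweedler components after applying $\tau_\diag$ to $f \circ_i g$: the antipodes that enter in the definition of $\circ_i$ through Theorem B must conspire with those produced by the cyclic rotation so that involutivity cancels them precisely at the right places. Once this compatibility is established, Corollary \ref{nikolaus} follows at once from \cite[Thm.~1.4]{Men:BVAACCOHA}, which guarantees that the cohomology of a cyclic operad with multiplication carries a BV algebra structure whose underlying Gerstenhaber bracket is the one produced by Theorem B and whose BV operator is the Connes boundary $B$ induced by the cocyclic structure of Theorem A.
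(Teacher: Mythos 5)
Your overall route is the same as the paper's: one checks by hand that $\tau_\diag$ together with the composition \rmref{aglio} and the element $\mu$ satisfies the cyclic operad axioms \rmref{cycl1}--\rmref{cycl3} plus $\tau_\diag\mu=\mu$, and then invokes Menichi's theorem to obtain the BV structure on cohomology; this is exactly how Theorem \ref{japonvuduciel2} and Corollary \ref{nikolaus} are proved (with the general-degree computations carried out in Appendix \ref{panic}).

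However, the organizing idea you propose for the verification rests on a premise the paper explicitly rules out. The composition $\circ_i$ of Theorem \ref{B} is \emph{not} obtained by intertwining the endomorphism and coendomorphism (column and row) operads through the antipode -- the paper stresses that this naive construction does not work -- so there is no reason the cyclic compatibility relations should split into a ``vertical'' check against $\tau_\alg$ and a ``horizontal'' check against $\tau_\coalg$. Indeed, $\tau_\alg$ and $\tau_\coalg$ are only para-cocyclic and neither is separately compatible with \rmref{aglio} in the cyclic-operad sense; in the actual proof the relations \rmref{cycl1} and \rmref{cycl2} are verified directly against the full formula \rmref{aglio}, with all antipodes expanded in Sweedler notation, and the cylindricity identity \rmref{khatiabuniatishvili} (equivalently, the computation in Theorem \ref{backspace}) enters \emph{only} to establish $\tau_\diag^{p+1}=\id$, i.e.\ axiom \rmref{cycl3}. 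Notably, involutivity of $S$ is needed only there: the checks of \rmref{cycl1}, \rmref{cycl2} and $\tau_\diag\mu=\mu$ go through for merely invertible $S$. A minor further point: in your ``outer'' axiom the target index should be the arity of $g$ (in the paper's conventions, $\tau_\diag(f\circ_1 g)=\tau_\diag g\circ_q\tau_\diag f$), not $p+q-1$, which exceeds the arity of $\tau_\diag(g)$. So the plan is salvageable, but only if you abandon the factorization strategy and commit to the direct (admittedly lengthy) Sweedler computation, as the paper does.
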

}

As already mentioned above, in case the Hopf algebra is finitely generated as a module over $\K$, the situation changes substantially as then the Gerstenhaber bracket vanishes in cohomology. In this situation, the interesting structure is of {\em even higher} nature, that is to say, a degree $-2$ bracket that is again compatible (in a graded sense) with the degree zero structure, {\em viz.}, the cup product. More precisely,
in Theorem \ref{schnief} and Corollary \ref{schnief2}, we conclude by:

{\renewcommand{\thetheorem}{{D}}
\begin{theorem}
\label{D}
Let $H$ be a finite dimensional Hopf algebra over a field of characteristic zero. Then the Gerstenhaber bracket from Theorem \ref{B} vanishes on Gerstenhaber-Schack cohomology, which therefore becomes an $e_3$-algebra.
\end{theorem}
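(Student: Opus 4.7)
The plan is to show that the Gerstenhaber bracket induced by the operad with multiplication of Theorem~\ref{B} vanishes on $H^\bullet_{GS}(H,H)$ when $H$ is finite dimensional; the $e_3$-algebra enhancement then follows automatically from Theorem~5.7 of \cite{FioKow:HBOCANCC}, applied to the cyclic operad with multiplication provided by Theorem~\ref{C}. The conceptual input, as already advertised in the introduction, is that a finite dimensional Hopf algebra over a field of characteristic zero admits no nonzero primitive elements: any primitive $x \neq 0$ would, via $\Delta(x^n) = \sum_{k} \binom{n}{k}\, x^k \otimes x^{n-k}$, produce an infinite family of linearly independent elements, contradicting finite dimensionality. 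The same applies to the dual Hopf algebra $H^*$, which is again finite dimensional.

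The first step is to examine the bracket in degree one, by analogy with the explicit formula \rmref{dayne} for $\mathcal{U}_{\mathfrak{g}}$. Reading off the pre-Lie composition from the proof of Theorem~\ref{B} on a pair of $1$-cocycles $f, g \in C^1_\diag(H,H) = \Hom_\K(H,H)$, the values of $\{f,g\}$ should factor, up to coboundaries and in analogy with \rmref{dayne}, through the primitive part of $H$ (or, dually, of $H^*$), and therefore vanish already in cohomology in the finite dimensional setting. To lift this vanishing to arbitrary degree, I would exploit the duality $H \cong H^*$ available in finite dimension: it induces an involution on the diagonal complex $C^\bullet_\diag(H,H)$ that interchanges the horizontal and vertical directions of the Gerstenhaber-Schack bicomplex, that is, swaps products and actions with coproducts and coactions. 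Compatibility of this involution with the operadic composition of Theorem~\ref{B}, combined with the graded antisymmetry of $[-,-]$ and its biderivation property with respect to the cup product, should force the bracket to vanish in all degrees.

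The hard part is precisely this second step. The operadic composition of Theorem~\ref{B} uses the antipode $S$ and its inverse in a delicate way, and the duality $H \cong H^*$ exchanges $S$ with $S^*$ rather than with itself, so the sign bookkeeping is not immediate. I expect the cyclic operator $\tau_\diag$ from Theorem~\ref{A}, together with the compatibility between $\tau_\diag$ and the operadic composition recorded in Theorem~\ref{C}, to provide the right tool for controlling these signs. The resulting statement parallels Taillefer's analogous observation \cite[Appendix~A]{Tai:IHBCOIDHAAGCOTYP} for her (a priori different) bracket on Gerstenhaber-Schack cohomology, which can serve as a sanity check on the final formulae; once the vanishing is established, Theorem~5.7 of \cite{FioKow:HBOCANCC} immediately upgrades the BV structure of Theorem~\ref{C} to the claimed $e_3$-algebra structure, expressed in terms of the cup product and the Connes-type operator $B$.
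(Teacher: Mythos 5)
You correctly identify the decisive input (a finite dimensional Hopf algebra over a field of characteristic zero has no nonzero primitive elements) and the source of the $e_3$-structure (Theorem 5.7 of \cite{FioKow:HBOCANCC}), but the core of your argument --- the passage from degree one to arbitrary degree --- has a genuine gap. First, the proposed tool does not exist: finite dimensionality gives $H^{**}\cong H$ and makes $H^*$ a Hopf algebra again, but it does \emph{not} give an isomorphism $H\cong H^*$; for instance, the group algebra of a nonabelian finite group is cocommutative and noncommutative, while its dual is commutative and noncocommutative, so no Hopf (anti)isomorphism interchanging the two can exist in general. Second, even granting some involution of $C^\bullet_\diag(H)$ swapping the horizontal and vertical directions of the bicomplex, no mechanism is offered for why compatibility with it, graded antisymmetry, and the biderivation property would force $\{-,-\}$ to vanish: every Gerstenhaber bracket is antisymmetric and a biderivation, and these properties never imply triviality by themselves. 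Your degree-one heuristic (that the values of $\{f,g\}$ ``factor through the primitive part'') is also not how the vanishing actually arises, so it cannot be bootstrapped.

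The paper's proof works uniformly in every degree and never invokes a self-duality of $H$: for a $\gd^\diag$-cocycle $f\in C^p_\diag(H)$, evaluating the cocycle condition at unit arguments and using the absence of nonzero primitives (this is where characteristic zero and finite dimensionality enter, e.g.\ $f(1)$ is primitive hence zero when $p=1$) yields the normalisation \rmref{minestra} and then the key commutation identity \rmref{recitadinatale},
\[
f(u^1_{(1)},\ldots,u^p_{(1)})\otimes u^1_{(2)}\cdots u^p_{(2)} \;=\; u^1_{(1)}\cdots u^p_{(1)}\otimes f(u^1_{(2)},\ldots,u^p_{(2)}),
\]
valid for any cocycle. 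Substituting this identity into the operadic composition \rmref{aglio} one checks directly that $\{f,g\}$ is zero (as in the degree-one computation, which reproves the heuristic behind \rmref{dayne}) or a coboundary (as in the $1$-cocycle/$2$-cocycle case worked out in Appendix \ref{favetti}). To repair your argument you would need to replace the $H\cong H^*$ step by a derivation of \rmref{recitadinatale} (or an equivalent ``$\gD$-equivariance'' of cocycles) from the cocycle condition plus the no-primitives statement, and then carry out the substitution into \rmref{aglio}; the cyclic operator $\tau_\diag$ plays no role in this part of the proof.
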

}

\subsection{Notation}
\label{notation}
In the following, $(H, m, \Delta, \eta, \gve, S)$ is most of the time a not necessarily finite dimensional Hopf algebra over a field $\K$ (sometimes merely a ring, most of the times of characteristic zero) with invertible antipode $S$ unless otherwise specified. 
Here, $m: H \otimes H \to H$ denotes the multiplication and $\eta: k \to H$ the unit in $H$, mostly suppressed in notation as is the specification of $\K$ in unadorned tensor products. For the coproduct, we use the customary Sweedler notation $\gD(u) = u_{(1)} \otimes u_{(2)}$ for $u \in H$. 
For two left $H$-modules $M$ and $N$, we will denote the left diagonal action on $M \otimes N$ by
\begin{equation}
\label{monoidal}
u \lact (m \otimes n) := u_{(1)} m \otimes u_{(2)} n
\end{equation}
for $m \in M$ and $n \in N$, and likewise the right one $(m \otimes n) \ract u := m u_{(1)}  \otimes n u_{(2)} $ if $M$ and $N$ were right $H$-modules instead.
The base field $\K$ will be considered as an $(H,H)$-bimodule via the the counit $\gve$:
\[
u\lact 1:= \gve(u)=:1\ract u.
\]
Furthermore, for the sake of compactness in notation, we often abbreviate a tensor power $u^1 \otimes \cdots \otimes u^n \in H^{\otimes n}$ by a tuple $(u^1, \ldots, u^n)$.
Quite on the contrary, for a $\K$-linear map $f: H^{\otimes p} \to H^{\otimes q}$, for all $p,q \geq 0$ we will sometimes use the notation
\begin{equation*}
\label{}
f^{(1)}(u_1, \ldots, u_p) \otimes \cdots \otimes f^{(q)}(u_1, \ldots, u_p) \in H^{\otimes q}
\end{equation*}
if we think that this increases comprehension.
Also, we often write $[a \otimes b][c \otimes d]$ to express the factorwise multiplication $ac \otimes bd$ on a tensor product.

\section{Gerstenhaber-Schack cohomology}

\subsection{The Gerstenhaber-Schack double and diagonal complex}

Whereas the formal deformation theory of an associative $\K$-algebra leads
to the Hochschild cochain complex, deforming a bialgebra $H$, that is, deforming simultaneously the multiplication and the comultiplication maintaining the compatibility between the two of them leads to a double complex with entries
$$
C^{pq}_{GS}(H,H) := \Hom_k(H^{\otimes p}, H^{\otimes q})
$$
for $p,q \geq 0$; see \cite[\S8]{GerSch:ABQGAAD} for all details as well as \cite[\S3]{Sho:TOABFATMC} for more information. The columns in this bicomplex  are given by the Hochschild cochain complex and Hochschild coboundary $\gd^v$ with values in the various tensor powers of $H$, whereas the rows are the coHochschild (also known as {\em Cartier}) cochain complexes with coHochschild coboundary $\delta^h$, equally with values in the various tensor powers of $H$:
\begin{equation}
\label{tauschetasche}
	\xymatrix{
\cdots & \cdots & \cdots &  
\\
\Hom_\K(H^{\otimes 2},\K) \ar_{\delta^v}[u] \ar^{\delta^h}[r] & \Hom_\K(H^{\otimes 2},H)  \ar_{\delta^v}[u] \ar^{\delta^h}[r] & \Hom_\K(H^{\otimes 2},H^{\otimes 2}) \ar_{\delta^v}[u] \ar[r] & \cdots 
\\
\Hom_\K(H,\K) \ar_{\delta^v}[u] \ar^{\delta^h}[r] & \Hom_\K(H,H) \ar_{\delta^v}[u] \ar^{\delta^h}[r] & \Hom_\K(H,H^{\otimes 2}) \ar_{\delta^v}[u] \ar[r] & \cdots 
\\
\Hom_\K(\K,\K) \ar_{\delta^v}[u] \ar^{\delta^h}[r] & \Hom_\K(\K,H) \ar_{\delta^v}[u] \ar^{\delta^h}[r] & \Hom_\K(\K,H^{\otimes 2}) \ar_{\delta^v}[u] \ar[r] & \cdots 
}
\end{equation}
Here, we start counting by zero, {\em i.e.}, the leftmost column and the bottom row will be referred to as the zeroth column and row, respectively.
The vertical resp.\ horizontal differential explicitly look as follows. For each column and $q \geq 0$, define the cosimplicial $\K$-module $C^\bullet(H, H^{\otimes q}) := \Hom_\K(H^{\otimes \bullet},H^{\otimes q})$ with the following cofaces and codegeneracies: for any $f \in C^n(H, H^{\otimes q})$,
set
\begin{equation}
\begin{array}{rcl}
\label{sosoaha}
(\gd^v_i f)(u^0, \ldots, u^n) \!\!\!\! &:=  \!\!\!\!&
\left\{  
\begin{array}{lcl}
u^0 \lact f(u^1, \ldots, u^n)
& \mbox{if} & i = 0, 
\\
 f(u^0, \ldots, u^{i-1} u^i, \ldots, u^n)
& \mbox{if} & 1 \leq i \leq  n, 
\\
 f(u^0, \ldots, u^{p-1}) \ract u^n
& \mbox{if} & i = n+1, 
\\
\end{array}\right.
\\
\
\\
(\gs^v_j f)(u^1, \ldots, u^{n-1})  \!\!\!\! &:=  \!\!\!\! &  
f(u^1, \ldots, u^{j}, 1, u^{j+1}, \ldots, u^{n-1})
, 
\quad  0 \leq j \leq n-1, 
\end{array}
\end{equation}
where $\lact, \ract$ denote the left resp.\ right diagonal action \rmref{monoidal}, and where we, as we will often do, denote tensor chains by tuples, {\em cf.} \S\ref{notation}.
Observe that when $q = 0$, the diagonal action on $\K$ is to be understood via the counit $\gve$. As usual, put $\delta^v: = \sum^{n+1}_{i=0} (-1)^i \delta^v_i$. 

Likewise, for each row and $p \geq 0$, define another cosimplicial $\K$-module by $C^\bullet(H^{\otimes p}, H) := \Hom_\K(H^{\otimes p},H^{\otimes \bullet})$ along with the following cofaces and codegeneracies: for any $f \in C^n(H^{\otimes p}, H)$,
set
\begin{small}
\begin{equation}
\begin{array}{rcl}
\label{susuaha}
(\gd^h_i f)(u^1, \ldots, u^p) \!\!\!\! &:=  \!\!\!\!&
\left\{  
\begin{array}{lcl}
u^1_{(1)} \cdots u^p_{(1)} \otimes  f(u^1_{(2)}, \ldots, u^p_{(2)})
& \mbox{if} & i = 0, 
\\
(\id^{i-1} \otimes \Delta \otimes \id^{q-i}) f(u^1, \ldots, u^p)
& \mbox{if} & 1 \leq i \leq  n, 
\\
f(u^1_{(1)}, \ldots, u^p_{(1)}) \otimes u^1_{(2)} \cdots u^p_{(2)} 
& \mbox{if} & i = n+1, 
\\
\end{array}\right.
\\
\
\\
(\gs^h_j f)(u^1, \ldots, u^{p})  \!\!\!\! &:=  \!\!\!\! &  
(\id^{j} \otimes \gve \otimes \id^{n-j-1}) f(u^1, \ldots, u^{p})
, 
\quad  0 \leq j \leq n-1. 
\end{array}
\end{equation}
\end{small}
Analogously as above, we put  $\delta^h: = \sum^{n+1}_{i=0} (-1)^i \delta^h_i$.

\begin{dfn}
The cohomology of the total complex associated to the Gerstenhaber-Schack bicomplex is called {\em Gerstenhaber-Schack cohomology} and denoted 
$$
H^\bullet_{GS}(H,H) := H^\bullet(\Tot C_{GS}(H,H)).
$$
\end{dfn}

It is a straightforward check that $C^{\bullet\bullet}_{GS}(H,H)$ becomes a bi-cosimplicial object by means of the quadruple $(\gd^v_\bull, \gs^v_\bull,  \gd^h_\bull, \gs^h_\bull)$;
hence, by the Dold-Puppe generalisation \cite{DolPup:HNAFA} of the Eilenberg-Zilber theorem, one has 
$$
H^\bullet_{GS}(H,H) := H^\bullet(\Tot C_{GS}(H,H)) \simeq H^\bullet(C_{\rm diag} (H)).
$$
Here, 
$$
C_{\rm diag}^n (H) := C^{nn}_{GS}(H,H)
$$ 
is the {\em diagonal (cochain) complex} with coboundary $\gd^\diag := \sum^{n+1}_{i=0} (-1)^i \delta^\diag_i$ with cofaces given as
\begin{equation}
\label{hihihaha}
\delta^\diag_i := \delta^v_i \circ \delta^h_i = \delta^h_i \circ \delta^v_i 
\end{equation}
for all $0 \leq i \leq n+1$ and codegeneracies by $\gs^\diag_j := \gs^v_j \circ \gs^h_j = \gs^h_j \circ \gs^v_j$ for $0 \leq j \leq n-1$.
In other words, $H^\bullet_{GS}(H,H)$ can be computed by the cosimplicial $\K$-module
\begin{equation}
\label{gazette}
C_{\rm diag}^n (H) = \Hom_k(H^{\otimes n}, H^{\otimes n}),
\end{equation}
where the cofaces and codegeneracies are explicitly given as follows:
\begin{equation}
\begin{split}
\label{trockenfirma}
(&\gd^\diag_i f)(u^0, \ldots, u^n) 
\\
&:=  
\left\{  
\begin{array}{lcl}
u^0_{(1)} \cdots u^n_{(1)} \otimes  u^0_{(2)}  \lact f(u^1_{(2)}, \ldots, u^n_{(2)})
& \mbox{if} & i = 0, 
\\
 (\id^{i-1} \otimes \Delta \otimes \id^{n-i}) f(u^0, \ldots, u^{i-1} u^i, \ldots, u^n) 
& \mbox{if} & 1 \leq i \leq  n, 
\\
f(u^0_{(1)}, \ldots, u^{n-1}_{(1)}) \ract u^n_{(1)}  \otimes u^1_{(2)} \cdots u^n_{(2)} 
& \mbox{if} & i = n+1, 
\\
\end{array}\right.
\\
& 
\
\\
(&\gs^\diag_j f)(u^1, \ldots, u^{n-1})  
\\
&:= 
(\id^{j} \otimes \gve \otimes \id^{n-j-1}) f(u^1, \ldots, u^{j}, 1, u^{j+1}, \ldots, u^{n-1})
, 
\quad  0 \leq j \leq n-1. 
\end{split}
\end{equation}

We will sometimes say {\em Gerstenhaber-Schack complex} to mean the diagonal complex just introduced, in contrast to {\em Gerstenhaber-Schack bicomplex} to mean the full bicomplex.

\section{Cyclic structures for the Gerstenhaber-Schack complex}
\label{steckdose}

In this section, we will show how to obtain a para-cocyclic structure both on the rows and on the columns of the Gerstenhaber-Schack bicomplex.

For a few details on (para-)cocyclic modules and terminology, see Appendix \ref{defilambda}.

\subsection{Para-cocyclic structures on the columns}
\label{yippie}

The idea of how to obtain a para-cocyclic structure on the columns 
goes as follows: when the Hopf algebra $H$ is merely seen as a $k$-algebra, that is, ignoring the full Hopf structure, the Hochschild cochain complex (the first column in the Gerstenhaber-Schack bicomplex) does not carry a cocyclic structure unless certain (Yetter-Drinfel'd type) conditions are fulfilled (see \cite[Thm.~1.3]{Kow:WEIABVA}) as is the case for, {\em e.g.}, Frobenius or Calabi-Yau algebras. 

On the other hand, using the full Hopf structure incorporated in the left adjoint action $H \otimes M \to M, \ u \otimes m \mapsto u_{(1)} m S(u_{(2)})$,
it is a well-known fact that for any $H$-bimodule $M$ there is a cochain isomorphism
\begin{equation}
\label{sesam0}
\Hom_k(H^{\otimes \bullet}, M) \simeq \Hom_k(H^{\otimes \bullet}, \ad(M))
\end{equation}
such that 
(via the mapping theorem VIII.3.1 in \cite{CarEil:HA})
the Hochschild cohomology of $H$ with values in the $H$-bimodule $M$ is isomorphic to its Hopf algebra cohomology with values in the left $H$-module $\ad(M)$, that is,
\begin{equation}
\label{sesam0a}
\Ext^\bullet_{H^e_\alg}
(H,M) \simeq \Ext^\bullet_{H}(k,\ad(M)).
\end{equation}
Explicitly, this map is defined as
\begin{equation}
\begin{array}{rcl}
\label{sesam1}
\xi: \Hom_k(H^{\otimes n}, M) &\to& \Hom_k(H^{\otimes n}, \ad(M)) \\
f &\mapsto& \{(u^1, \ldots, u^n) \mapsto f(u^1_{(1)}, \ldots, u^n_{(1)}) S(u^1_{(2)} \cdots u^n_{(2)}) \} 
\end{array}
\end{equation}
in degree $n$, where we again denoted tensor powers by tuples. It is easy to see that
\begin{equation*}
\label{sesam2}
g \mapsto \{  (u^1, \ldots, u^n) \mapsto g(u^1_{(1)}, \ldots, u^n_{(1)}) u^1_{(2)} \cdots u^n_{(2)}\}
\end{equation*}
gives the inverse. 
As a side remark, we add here that (at least in case $M= H$ or more generally if $M$ is a braided commutative Yetter-Drinfel'd algebra) this map even induces a map of Gerstenhaber algebras, see \cite[Ex.~3.5]{Kow:BVASOCAPB}.

On the other hand, by \cite[Thm.~1.2]{Kow:WEIABVA} the cochain complex defined by the right hand side in \rmref{sesam0} {\em can} be turned into a (para-)cocyclic $\K$-module provided that $M$ is a so-called {\em  anti Yetter-Drinfel'd (aYD)  contramodule} over $H$, see \S\ref{guertel} right below for more details. Hence, we will check for $M := H^{\otimes q}$ for $q \geq 0$ whether this is the case and if so, transfer the (para-)cocyclic structure from the right hand side in \rmref{sesam0} by means of the cochain isomorphism to its left hand side, that is, to the columns in the Gerstenhaber-Schack bicomplex.
Afterwards, in \S\ref{yuppue} we explain a dual theory the full details of which will appear elsewhere.

\subsubsection{Hopf algebra cohomology}

Define for any left $H$-module $M$ (with action simply denoted by juxtaposition) the complex $\big(C^\bullet(H,M), \delta_\emme \big)$ as follows: put
\begin{equation}
\label{earlyinthemorning}
C^n(H,M) := \Hom_k(H^{\otimes n}, M) 
\end{equation}
in degree $n \in \N$ and give it the structure of a cosimplicial $\K$-module $\big(C^\bullet(H,M), \delta_\bull, \gs_\bull \big)$ by means of the following cofaces and codegeneracies: for every $f \in C^n(H,M)$, set
\begin{equation}
\begin{array}{rcl}
\label{deepatnight2}
(\gd_i f)(u^0, \ldots, u^n) \!\!\!\! &:=  \!\!\!\!&
\left\{  
\begin{array}{lcl}
u^0 f(u^1, \cdots, u^n) & \mbox{for} & i = 0, 
\\
f(u^0, \cdots, u^{i-1} u^{i}, \cdots, u^n) & \mbox{for} & 1 \leq i \leq  n, 
\\
f(u^0, \cdots, u^{n-1}) \gve (u^n)  & \mbox{for} & i = n+1, 
\\
\end{array}\right.
\\
\
\\
(\gs_j f)(u^1, \ldots, u^{n-1})  \!\!\!\! &:=  \!\!\!\! &  
 f(u^1, \ldots, u^{j}, 1, u^{j+1}, \ldots, u^{n-1}), 
\quad  0 \leq j \leq n-1. 
\end{array}
\end{equation}
Defining as usual the differential by 
$
\delta_\emme := \sum^{n+1}_{i=0} (-1)^i \delta_i,
$
the complex $\big(C^\bullet(H,M), \delta_\emme \big)$ then computes the {\em Hopf algebra cohomology}
$\Ext^\bullet_{H_\alg}(\K,M)$ with values in $M$, where $\K$ is seen as a left $H$-module via the counit. The subscript $_\alg$ here indicates for better distinction (compared to \S\ref{yuppue}) that this is an $\Ext$ in $\hmod$, the category of left $H$-modules for the $\K$-algebra $H$.

\subsubsection{Cocyclic structures on Hopf algebra cohomology}
\label{guertel}

Following \cite[Cor.~4.13]{Kow:WEIABVA}, in order to define a cocyclic operator on 
$\big(C^\bullet(H,M), \delta_\bull, \gs_\bull \big)$,
we have to assume more structure 
on the coefficients $M$: beyond being a left $H$-module, we also need 
$M$ to be a right $H$-contramodule, that is, comes equipped with a map $\gamma: \Hom_\K(H,M) \to M$ subject to certain (contra)associativity and (contra)unitality conditions (see {\em op.~cit.}
for all details; in case $H$ is finitely generated as a module over $\K$, think of a right $H^*$-module).
As in \cite[Thm.~1.2]{Kow:WEIABVA}, a 
para-cocyclic operator on this complex is now given by
\begin{equation}
\label{deeperatnight2a}
(\tau f)(u^1, \ldots, u^n) = 
\gamma\Big(u^1_{(1)} f\big(u^2_{(1)}, \ldots, u^n_{(1)}, S(u^1_{(2)} u^2_{(2)} \cdots u^n_{(2)})(-) \big)\Big).
\end{equation}

The so-defined para-cocyclic $\K$-module 
$\big(C^\bullet(H,M), \delta_\bull, \gs_\bull, \tau\big)$
becomes {\em cyclic}, that is, additionally fulfils $\tau^{n+1} = \id$ in degree $n$ only if $M$ is a {\em stable aYD contramodule}, that is, obeys the following compatibility between left $H$-action and right $H$-contraaction,
\begin{equation}
\label{nawas1}
u (\gamma(f)) = \gamma \big(u_{(2)} f(S(u_{(3)})(-)u_{(1)}) \big), \qquad \forall u \in H, \ f \in \Hom_\K(H,M),
\end{equation}
as well as the stability condition,
\begin{equation}
\label{stablehalt}
\gamma((-)m)= m
\end{equation}
for all $m \in M$, where we denote 
$(-)m \colon u \mapsto um$ as a map in $ \Hom_\K(H,M)$.

For example, the trivial contraaction $\gamma: \Hom_\K(H,M) \to M, \ f \mapsto f(1)$, which we most of the times suppress in notation, obviously fulfils stability in the sense of \rmref{stablehalt}, but it is equally obvious that it is in general {\em not}  
compatible with the $H$-action in the sense of the aYD-condition \rmref{nawas1} {\em unless} $H$ is cocommutative (or unless the $H$-action on $M$ is itself trivial in the sense of being given by $u \otimes m \mapsto \gve(u)m$), and hence for the trivial contraaction we in general do not have that $\tau^{n+1} = \id$ in degree $n$. 

Nevertheless, we want to closer examine the case of the trivial contraaction: the para-cocyclic operator \rmref{deeperatnight2a} here simplifies to 
\begin{equation}
\label{deeperatnight2}
(\tau f)(u^1, \ldots, u^n) = 
u^1_{(1)} f\big(u^2_{(1)}, \ldots, u^n_{(1)}, S(u^1_{(2)} u^2_{(2)} \cdots u^n_{(2)})\big),
\end{equation}
and if the antipode $S$ is invertible, $\tau$ is equally invertible with inverse
\begin{small}
\begin{equation}
\label{deeperatnight3}
\begin{split}
(\tau^{-1} f)&(u^1, \ldots, u^n) 
= 
u^1_{(3)} \cdots u^{n-1}_{(3)} u^n_{(2)}  f\big(S^{-1}(u^1_{(2)}  \cdots u^{n-1}_{(2)} u^n_{(1)}), u^1_{(1)}, \ldots,  u^{n-1}_{(1)} \big),
\end{split}
\end{equation}
\end{small}
which is the operator we shall deal with in what follows and which is why we drop the superscript $^{-1}$ from the notation ({\em i.e.}, we tacitly exchange the r\^oles of $\tau$ and $\tau^{-1}$).

As our goal in this section is to find a para-cocyclic operator on the columns in the Gerstenhaber-Schack bicomplex, we will focus on the case in which $M$ is given by the tensor powers $H^{\otimes q}$ for $q \geq 0$ seen as a left $H$-module by means of the adjoint action
$$
u \rightslice (v^1 \otimes \cdots \otimes v^p) :=  u_{(1)} \lact (v^1 \otimes \cdots \otimes v^p) \ract S(u_{(2)}),
$$ 
where $\lact, \ract$ denote the left resp.\ right diagonal action as in \S\ref{notation}. We will refer to this situation by $\ad(H^{\otimes q})$.
With this, we form the cochain complex $C^\bullet(H,\ad(H^{\otimes q})) = \Hom_k(H^{\otimes \bullet}, \ad(H^{\otimes q}))$ for every $q \geq 0$ with differential
\begin{equation*}
\begin{split}
(\delta_\ad f)(u^0, \ldots, u^n) &= u^0_{(1)} \lact f(u^1, \ldots, u^n) \ract S(u^0_{(2)}) 
\\
&
\quad
+ \sum_{i=0}^{n-1} (-1)^{i+1} f(u^0, \cdots, u^i u^{i+1}, \cdots, u^n) 
\\
& 
\quad
+ (-1)^{n+1}  f(u^0, \ldots, u^{n-1}) \gve(u^n), 
\end{split}
\end{equation*}
which computes the groups $\Ext^\bullet_{H_\alg}(\K,{\rm ad}(H^{\otimes q}))$. As above 
\begin{equation}
\label{deeperatnight4}
\begin{split}
(\tau_\ad f)&(u^1, \ldots, u^n) 
\\
&
= 
(u^1_{(3)} \cdots u^{n-1}_{(3)} u^n_{(2)}) \rightslice  f\big(S^{-1}(u^1_{(2)}  \cdots u^{n-1}_{(2)} u^n_{(1)}), u^1_{(1)}, \ldots,  u^{n-1}_{(1)} \big)
\end{split}
\end{equation}
hence yields a para-cocyclic operator on (the cosimplicial $\K$-module associated to) $\big(C^\bullet(H,\ad(H)), \delta_\ad \big)$. 

\subsubsection{Transferring the para-cocyclic structure} 
\label{yeppee}


The cochain isomorphism \rmref{sesam1} for $M := H^{\otimes q}$, $q \geq 0$, explicitly looks as follows:  
\begin{eqnarray*}
C^n(H, H^{\otimes q}) &\to& C^n(H, \ad(H^{\otimes q})) 
\\
 \xi: f \mapsto \{(u^1, \ldots, u^n) &\mapsto& f(u^1_{(1)}, \ldots, u^n_{(1)}) \ract S(u^1_{(2)} \cdots u^n_{(2)}) \} \\
 \xi^{-1}: \{  g(u^1_{(1)}, \ldots, u^n_{(1)}) \ract (u^1_{(2)} \cdots u^n_{(2)})  &\mapsfrom& (u^1, \ldots, u^n) \} \mapsfrom g
\end{eqnarray*}
for all $q \geq 0$, where as before $\lact, \ract$ denote the left resp.\ right diagonal action on an element $f(u^1, \ldots, u^n) \in H^{\otimes q}$. Hence, Eq.~\rmref{sesam2} reads as
$$
\Ext^\bullet_{H^e_\alg}
(H,H^{\otimes q}) \simeq \Ext^\bullet_{H}(k,\ad(H^{\otimes q})).
$$ 
By means of this cochain isomorphism, we now transport the para-cocyclic operator $\tau_\ad$ from \rmref{deeperatnight4} to the para-cocyclic operator 
$$
\tau_\alg := \xi^{-1} \circ \tau_\ad \circ \xi
$$
on the Hochschild complex for $H$ with coefficients in $H^{\otimes q}$, that is, to the columns in the Gerstenhaber-Schack bicomplex \rmref{tauschetasche}. Explicitly, after a straightforward computation this comes out as
\begin{equation*}
\begin{split}
(\tau_\alg f)(u^1, \ldots, u^n) =  (u^1_{(3)} \cdots u^n_{(3)}) \lact  f\big(S^{-1}(u^1_{(2)} \cdots u^n_{(2)}), u^1_{(1)}, \ldots, u^{n-1}_{(1)}\big) \ract u^n_{(1)}.
\end{split}
\end{equation*}
A tedious but straightforward computation the details of which we also happily omit yields
\begin{equation}
\label{pausenscheibe}
\begin{split}
(\tau_\alg^{n+1} f)&(u^1, \ldots, u^n) 
\\
&
=  \big(u^1_{(5)} \cdots u^n_{(5)} S^{-1}(u^1_{(2)} \cdots u^n_{(2)})\big) \lact  f\big(S^{-2}(u^1_{(3)}), \ldots S^{-2}(u^n_{(3)})\big) 
\\
&
\quad
\ract  \big(S^{-1}(u^1_{(4)} \cdots u^n_{(4)}) u^1_{(1)} \cdots u^n_{(1)}\big),
\end{split}
\end{equation}
which is the identity if $H$ is cocommutative (which in turn implies $S^{-2} = \id$). Summing up, we obtain:

\begin{prop}
\label{columns}
For all $q \geq 0$, the cosimplicial $\K$-module associated to the Hochschild complex $\big(C^\bullet(H, H^{\otimes q}), \gd^v\big)$ for a Hopf algebra $H$ seen merely as a $\K$-algebra (that is, the columns in the Gerstenhaber-Schack bicomplex) can be given the structure of a para-cocyclic $\K$-module, which is cocyclic if $H$ is cocommutative.
\end{prop}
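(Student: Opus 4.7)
The plan is to reduce the claim to the already-established para-cocyclic structure on $C^\bullet(H, \ad(H^{\otimes q}))$ by transferring it along the cochain isomorphism $\xi$ from \rmref{sesam1}, and then to extract cocyclicity in the cocommutative case from the explicit formula \rmref{pausenscheibe}.

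First, I would verify that $\xi$ is actually a morphism of cosimplicial $\K$-modules between $\big(C^\bullet(H, H^{\otimes q}), \gd^v_\bull, \gs^v_\bull\big)$ and $\big(C^\bullet(H, \ad(H^{\otimes q})), \gd_\bull, \gs_\bull\big)$, the latter with the structure maps \rmref{deepatnight2} for $M = \ad(H^{\otimes q})$. For the codegeneracies this is immediate, since insertion of the unit commutes with the right-acting $S$-twist via $\gD(1) = 1 \otimes 1$. The inner cofaces $\gd^v_i$ for $1 \leq i \leq n$ are handled by the bialgebra axiom $\gD(uv) = \gD(u)\gD(v)$. The two extremal cofaces are the substantive check: the identity $\xi \circ \gd^v_0 = \gd_0 \circ \xi$ reduces, after applying $S(ab) = S(b)S(a)$, to a matching of coproduct components, while $\xi \circ \gd^v_{n+1} = \gd_{n+1} \circ \xi$ uses the antipode axiom $u_{(1)} S(u_{(2)}) = \eta\gve(u)$ to produce the counit factor on the right-hand side.

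Second, the general results of \cite[Thm.~1.2]{Kow:WEIABVA}, applied to $\ad(H^{\otimes q})$ equipped with its trivial contraaction $\gamma(f) = f(1)$, guarantee that the operator $\tau_\ad$ from \rmref{deeperatnight4} satisfies all the para-cocyclic identities with respect to $(\gd_\bull, \gs_\bull)$. Conjugating by $\xi$, the operator $\tau_\alg := \xi^{-1} \circ \tau_\ad \circ \xi$ automatically inherits these identities with respect to $(\gd^v_\bull, \gs^v_\bull)$, equipping $C^\bullet(H, H^{\otimes q})$ with the desired para-cocyclic structure.

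Finally, for the cocyclicity in the cocommutative case I would invoke the formula \rmref{pausenscheibe}: cocommutativity forces $S^2 = \id$, so $S^{-2} = \id$ kills the innermost antipode on the arguments of $f$. Moreover, the left flanking factor $u^1_{(5)} \cdots u^n_{(5)} S^{-1}(u^1_{(2)} \cdots u^n_{(2)})$ and the right flanking factor $S^{-1}(u^1_{(4)} \cdots u^n_{(4)}) u^1_{(1)} \cdots u^n_{(1)}$ both collapse via the identity $u_{(1)} S^{-1}(u_{(2)}) = \gve(u) = S^{-1}(u_{(1)}) u_{(2)}$, where cocommutativity is essential in order to freely relabel the Sweedler indices so that matched pairs become adjacent. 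The main computational obstacle throughout the proof is this Sweedler-index bookkeeping in the intertwining step; once that is settled, everything else is a formal transfer of structure or a direct substitution.
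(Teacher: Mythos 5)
Your proposal is correct and follows essentially the same route as the paper: transferring the para-cocyclic structure of \cite[Thm.~1.2]{Kow:WEIABVA} on $C^\bullet(H,\ad(H^{\otimes q}))$ (with the trivial contraaction) along the cochain isomorphism $\xi$ of \rmref{sesam1}, and then reading off cocyclicity in the cocommutative case from the explicit formula \rmref{pausenscheibe} for $\tau_\alg^{n+1}$. The only difference is that you spell out the check that $\xi$ intertwines the cosimplicial structures, which the paper treats as a well-known fact.
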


\subsection{Para-cocyclic structures on the rows}
\label{yuppue}

This section can be seen as a sort of dual theory to that developed in \S\ref{yippie} the underlying idea being very similar: dual to the above, 
let $M$ be an $H$-bicomodule and consider it also as a right $H$-comodule $\coad(M)$ by means of its right coadjoint coaction. Similarly to the above, 
one has a cochain isomorphism
\begin{equation}
\label{mases0}
\Hom_k(M, H^{\otimes \bullet}) \simeq \Hom_k({\rm coad}(M), H^{\otimes \bullet})
\end{equation}
between the coHochschild (or {\em Cartier}) cochain complex on the left hand side
and the (what we call since we did not find a better name for it) Hopf coalgebra cohomology
such that 
$$
\Ext^\bullet_{H^e_\coalg}
(H,M) \simeq \Ext^\bullet_{H}(k,{\rm coad}(M)),
$$
where $H^e_\coalg$ this time denotes the enveloping coalgebra. This isomorphism is (for certain coefficients $M$ such as $H$ itself) again one of Gerstenhaber algebras, which will be proven elsewhere. 
As above, we are mainly interested in the case $M := H^{\otimes q}$, $q \geq 0$, now seen as $H$-comodules, and here the isomorphism \rmref{mases0} comes out as
\begin{equation}
\label{mases1}
\begin{array}{rcl}
\Hom_k(H^{\otimes p}, H^{\otimes n}) &\to& \Hom_k({\rm coad}(H^{\otimes p}), H^{\otimes n}) \\
 \eta: f \mapsto \{(u^1, \ldots, u^p) &\mapsto& S(u^1_{(1)} \cdots u^p_{(1)}) \lact f(u^1_{(2)}, \ldots, u^p_{(2)}) \} \\
 \eta^{-1}: \{ (u^1_{(1)} \cdots u^p_{(1)}) \lact g(u^1_{(2)}, \ldots, u^p_{(2)}) &\mapsfrom& (u^1,  \ldots, u^p) \} \mapsfrom g.
\end{array}
\end{equation}

Although this dual theory is somewhat involved, we will be quite brief and defer most of the details to a separate publication, notably the comments on so-called {\em countermodules}, see below. 

\subsubsection{Hopf coalgebra cohomology}
Define for any right $H$-comodule $M$ (with coaction denoted by the usual Sweedler notation $M \to M \otimes H, \ m \mapsto m_{(0)} \otimes m_{(1)}$) the complex $\big(C^\bullet(M,H), \delta^\emme \big)$ as follows: put
$C^n(M,H) := \Hom_k(M, H^{\otimes n})$ in each degree $n \in \N$ 
and give it the structure of a cosimplicial $\K$-module $\big(C^\bullet(M,H), \delta_\bull, \gs_\bull \big)$ by means of the following cofaces and codegeneracies: for every $f \in C^n(M,N)$, set
\begin{equation}
\begin{array}{rcl}
\label{deepatnight6}
(\gd_i f)(m) \!\!\!\! &:=  \!\!\!\!&
\left\{  
\begin{array}{lcl}
 1 \otimes f(m) & \mbox{for} & i = 0, 
\\
 (\id^{i-1} \otimes \gD \otimes \id^{n-i}) f(m) & \mbox{for} & 1 \leq i \leq  n, 
\\
 f(m_{(0)}) \otimes m_{(1)}  & \mbox{for} & i = n+1, 
\\
\end{array}\right.
\\
\
\\
(\gs_j f)(m)  \!\!\!\! &:=  \!\!\!\! &  (\id^{j} \otimes \gve \otimes \id^{n-1-j}) f(m), \quad 0 \leq j \leq n-1.
\end{array}
\end{equation}
Defining again the differential as the sum over all pieces, that is, by 
$
\delta^\emme := \sum^{n+1}_{i=0} (-1)^i \delta_i,
$
the complex $\big(C^\bullet(M,H), \delta^\emme \big)$ then 
computes the {\em Hopf coalgebra cohomology}
$\Ext^\bullet_{H_\coalg}(\K,M)$ with values in $M$, where $\K$ is seen as a trivial right $H$-comodule via the unit in $H$.  The subscript $_\coalg$ here indicates that this is an $\Ext$ in $\comodh$, the category of right $H$-comodules.

\subsubsection{Cocyclic structures on Hopf coalgebra cohomology}

In \S\ref{guertel}, to obtain a cocyclic operator, we had to assume an extra structure on the left $H$-modules $M$ given by a right $H$-contraaction which can be seen as a map $\gamma_\emme: \Hom_\K(H,M) \to \Hom_\K(\K,M)$. In the picture at hand, one has to equip right $H$-comodules with a sort of dual contraaction, more precisely with a map 
$$
\gamma^\emme: \Hom_\K(M,H) \to \Hom_\K(M,\K), 
$$
again subject to a certain associativity condition (which, however, turns out to be more involved as the Hom-tensor adjunction only works in one direction). One might call these objects {\em countermodules}, but we refrain from spelling out the details here. Whereas contraactions can be thought of as right $H^*$-actions in case $H$ is finitely generated as a module over $\K$, a counteraction might be thought of as a right $H$-action on $\Hom_\K(M,\K) =: M^*$ in case $M$ is finitely generated over $\K$ instead.

Dual to \S\ref{guertel}, we will now focus on the case in which the coefficients $M$ are given by the right $H$-comodule $H^{\otimes p}$ for $p \geq 0$ by means of the coadjoint coaction
\begin{equation}
\label{schnellschnell}
\begin{array}{rcl}
H^{\otimes p} &\to& H^{\otimes p} \otimes H, 
\\
u^1 \otimes \cdots \otimes u^p &\mapsto&
  u^1_{(2)} \otimes \cdots \otimes u^p_{(2)} \otimes  S(u^1_{(1)} \cdots u^p_{(1)})  u^1_{(3)} \cdots u^p_{(3)},
\end{array}
\end{equation}
and we will denote  $H^{\otimes p}$ equipped with this right coaction by $\coad(H^{\otimes p})$. With this, we 
obtain the cochain complex $C^\bullet(\coad(H^{\otimes p}), H) = \Hom_k(\coad(H^{\otimes p}), H^{\otimes \bullet})$ for every $p \geq 0$ with differential
\begin{equation*}
\begin{split}
(\delta_\coad f)(u^1, \ldots, u^p) 
&
= 1 \otimes f(u^1, \ldots, u^p) 
\\
&
\qquad
+ \sum_{i=1}^{n} (-1)^{i} (\id^{i-1} \otimes \Delta \otimes \id^{n-i}) f(u^1, \ldots, u^p) 
\\
& 
\qquad
+ (-1)^{n+1}  f(u^1_{(2)}, \ldots, u^p_{(2)}) \otimes  S(u^1_{(1)} \cdots u^p_{(1)})  u^1_{(3)} \cdots u^p_{(3)}, 
\end{split}
\end{equation*}
for $f \in  C^n(\coad(H^{\otimes p}), H)$,
which computes the groups $\Ext^\bullet_{H_\coalg}(\K,{\rm coad}(H^{\otimes p}))$.

Adding now the trivial counteraction  $\gamma^\emme:  \Hom_\K(M,H) \to \Hom_\K(M,\K), \ f \mapsto \gve \circ f$ given by the counit, one obtains a cocyclic operator:

\begin{lem}
\label{coad}
In the situation above, 
for $f \in  C^n(\coad(H^{\otimes p}), H)$,
the operator
\begin{footnotesize}
\begin{equation}
\label{deeperatnight1047}
\begin{split}
&(\tau_\coad f)(u^1, \ldots, u^p) 
\\
&
= 
S\big(f^{(1)}(u^1_{(2)}, \ldots, u^p_{(2)})\big) \lact
\\
&
\quad
 \big( 
f^{(2)}(u^1_{(2)}, \ldots, u^p_{(2)}) \otimes \cdots \otimes f^{(n)}(u^1_{(2)}, \ldots, u^p_{(2)}) \otimes S(u^1_{(1)} \cdots u^p_{(1)})  u^1_{(3)} \cdots u^p_{(3)}
\big), 
\end{split}
\end{equation}
\end{footnotesize}
where as always $\lact$ denotes the left diagonal $H$-action,
yields a para-cocyclic operator that completes the cosimplicial $\K$-module associated to the cochain complex $\big(C^\bullet(\coad(H^{\otimes q}), H), \delta_\coad \big)$ to a para-cocyclic $\K$-module, which is cyclic if $H$ is cocommutative. 
\end{lem}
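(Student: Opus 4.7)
The plan is to mirror, in a dualised form, the argument that established Proposition \ref{columns}. First, I would observe that the stated formula for $\tau_\coad$ arises naturally from transferring, through the cochain isomorphism $\eta$ of \rmref{mases1}, a cyclic rotation on the coHochschild complex $C^\bullet(H^{\otimes p},H)$ constructed by means of the trivial counteraction $\gamma^\emme: f \mapsto \gve \circ f$: the antipode in front of $f^{(1)}$ in \rmref{deeperatnight1047} reflects the $\eta$-conjugation, while the right coadjoint tail reflects the coaction \rmref{schnellschnell} on $\coad(H^{\otimes p})$. This is precisely the dual of the route from $\tau_\ad$ to $\tau_\alg$ carried out in \S\ref{yeppee}.

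Next, I would verify the Connes para-cocyclic identities relating $\tau_\coad$ to the cofaces and codegeneracies of $\delta_\coad$. The relations involving the interior cofaces $\delta_i$ for $1 \leq i \leq n$, and all codegeneracies $\sigma_j$, reduce to coassociativity, counitality, and a harmless reshuffling of Sweedler components. The crucial identity $\tau_\coad \circ \delta_0 = \delta_{n+1}$ is the conceptual heart of the statement: the unit $1$ inserted in the first slot by $\delta_0$ is mapped by $\tau_\coad$ using $S(1)=1$, which neutralises the antipode prefactor, and the remaining terms reassemble exactly into the coadjoint shifting prescribed by $\delta_{n+1}$ via the coaction \rmref{schnellschnell}.

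For cyclicity in the cocommutative case, I would compute $\tau_\coad^{n+1}$ by iteration and obtain an expression formally analogous to \rmref{pausenscheibe}: each interior tensor slot of $f$ would carry a conjugation by $S^{\pm 2}$, while the final slot accumulates a chain of coadjoint corrections depending on the Sweedler decompositions of $u^1,\ldots,u^p$. Cocommutativity forces $S^2 = \id$ and permits Sweedler factors to be freely permuted, so the accumulated corrections telescope to a counit and the composite reduces to the identity, confirming $\tau_\coad^{n+1}=\id$.

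The main obstacle is not conceptual but purely combinatorial: balancing the $p$ Sweedler decompositions of the inputs against the $n$ tensor slots of $f$ and the factorwise diagonal action of an $n$-fold coproduct of $S(f^{(1)}(\cdots))$ makes the direct verification notationally dense. The economical workaround is to perform the computation first on the transferred complex $C^\bullet(H^{\otimes p},H)$, where the analogue of $\tau_\coad$ takes a more symmetric ``rotate-and-twist'' shape, and then transport the verified para-cocyclic structure back to $C^\bullet(\coad(H^{\otimes p}),H)$ through $\eta$.
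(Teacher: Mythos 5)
Your proposal is correct and is in substance the paper's own argument: the authors likewise leave the para-cocyclic relations as a direct (``nasty but straightforward'') verification---your check of $\tau_\coad\circ\gd_0=\gd_{n+1}$ via $S(1)=1$ and the coadjoint coaction \rmref{schnellschnell} is exactly how that relation goes---and they settle the cocommutative cyclicity precisely by your proposed workaround, namely by computing the $(n+1)$-st power of the transferred operator on the coHochschild complex (Eq.~\rmref{schonwiederkrank} in the proof of Proposition \ref{rows}) and conjugating back through $\eta$. The only inessential differences are that the paper runs the transfer in the opposite direction---$\tau_\coad$ is defined directly from the counteraction formalism on $\coad(H^{\otimes p})$ and then pushed to the rows via \rmref{coalg}---and that your transport argument tacitly uses that $\eta$ intertwines the two cosimplicial structures (while the ``interior'' relations also invoke the antipode identities, not only coassociativity and counitality), all of which are routine checks of the same flavour as those the paper omits.
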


\begin{proof}
That  $\big(C^\bullet(\coad(H^{\otimes q}), H), \delta_\bull, \gs_\bull, \tau_\coad \big)$ with the cosimplicial pieces from \rmref{deepatnight6} and the coadjoint coaction \rmref{schnellschnell} gives a para-cocyclic $\K$-module is a nasty but straightforward computation which we omit. The last statement will follow (somewhat backwards) from Proposition \ref{rows}.
\end{proof}

\subsubsection{Transferring the para-cocyclic structure}
Analogously to \S\ref{yeppee}, we now transport the operator $\tau_\coad$ to the rows in the Gerstenhaber-Schack bicomplex by means of the cochain isomorphism \rmref{mases1}, that is,
\begin{equation}
\label{coalg}
\tau_\coalg := \eta^{-1} \circ \tau_\coad \circ \eta.
\end{equation}
Explicitly, we obtain the following para-cocyclic operator on the coHochschild (or Cartier) cochain complex $C^\bullet(H^{\otimes p}, H) = \Hom_k(H^{\otimes p}, H^{\otimes \bullet})$:
\begin{equation}
\label{watnweesznicke}
\begin{split}
&(\tau_\coalg f)(u^1, \ldots, u^p)
\\
&
=
\big(u^1_{(1)} \cdots u^p_{(1)} S(f^{(1)}(u^1_{(2)}, \ldots, u^p_{(2)}))\big) \lact
\\
&
\quad
 \big( 
f^{(2)}(u^1_{(2)}, \ldots, u^p_{(2)}) \otimes \cdots \otimes f^{(n)}(u^1_{(2)}, \ldots, u^p_{(2)}) \otimes u^1_{(3)} \cdots u^p_{(3)}
\big), 
\end{split}
\end{equation}
for every $f \in  C^n(H^{\otimes p}, H)$.

In total, and
analogously to Proposition \ref{columns}, we obtain:

\begin{prop}
\label{rows}
For all $p \geq 0$, the cosimplicial $\K$-module associated to the coHochschild (or Cartier) cochain complex $\big(C^\bullet(H^{\otimes p}, H), \gd^h\big)$ for a Hopf algebra $H$ seen merely as a $\K$-coalgebra (that is, the rows in the Gerstenhaber-Schack bicomplex) can be given the structure of a para-cocyclic $\K$-module, which is cyclic if $H$ is cocommutative.
\end{prop}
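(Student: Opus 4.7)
The plan is to transfer the para-cocyclic operator $\tau_\coad$ of Lemma \ref{coad} along the cochain isomorphism $\eta$ of \eqref{mases1}, defining $\tau_\coalg := \eta^{-1} \circ \tau_\coad \circ \eta$. The first thing I would check is that $\eta$ is in fact an isomorphism not merely of cochain complexes but of cosimplicial $\K$-modules, intertwining the cofaces and codegeneracies of \eqref{deepatnight6} applied to the coadjoint comodule $M = \coad(H^{\otimes p})$ with those of the Cartier complex in \eqref{susuaha}. For the inner cofaces $\delta_i$ with $1 \leq i \leq n$ and all codegeneracies this is immediate, as these operators only act on the target $H^{\otimes n}$ and therefore commute with the prefactor $S(u^1_{(1)} \cdots u^p_{(1)}) \lact (-)$ appearing in $\eta$. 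The genuinely nontrivial cases are $\delta_0$ and $\delta_{n+1}$: for these one expands the diagonal action
\[
\Delta\bigl(S(u^1_{(1)} \cdots u^p_{(1)})\bigr) = S(u^1_{(2)} \cdots u^p_{(2)}) \otimes S(u^1_{(1)} \cdots u^p_{(1)})
\]
(in a suitable iterated Sweedler notation) and then collapses the first slot via the antipode axiom $S(v_{(1)}) v_{(2)} = \gve(v)$ to reproduce $\delta^h_0$ (and similarly $\delta^h_{n+1}$, the latter matching the coadjoint coaction \eqref{schnellschnell} under $\eta^{-1}$).

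Granted this cosimplicial compatibility, the para-cocyclic relations for $\tau_\coalg$ with the cofaces and codegeneracies of \eqref{susuaha} follow formally from the corresponding relations for $\tau_\coad$ established in Lemma \ref{coad}. To extract the explicit form \eqref{watnweesznicke}, I would substitute \eqref{deeperatnight1047} into $\eta^{-1} \circ \tau_\coad \circ \eta$ and simplify the telescoping products: the antipode $S(u^1_{(1)} \cdots u^p_{(1)})$ produced by $\eta$ combines with the outer antipode $S(f^{(1)})$ of $\tau_\coad$ and with the $u^1_{(1)} \cdots u^p_{(1)}$ reintroduced by $\eta^{-1}$, and the various coadjoint-coaction factors $S(u^1_{(1)} \cdots u^p_{(1)}) u^1_{(3)} \cdots u^p_{(3)}$ recombine to yield exactly the claimed formula.

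For the cocyclicity claim when $H$ is cocommutative, I would compute $\tau_\coalg^{n+1}$ directly from \eqref{watnweesznicke}, in analogy with \eqref{pausenscheibe} for the columns. Iterating the shift-and-permute step produces a prefactor of the form $(u^1_{(\cdot)} \cdots u^p_{(\cdot)}) S(u^1_{(\cdot)} \cdots u^p_{(\cdot)})$ acting on a tuple obtained by cyclically permuting and antipoding the components $f^{(1)}, \ldots, f^{(n)}$. Under cocommutativity these Sweedler pieces can be freely reshuffled so that all $S$-factors telescope against matching $u^i$-factors via $m \circ (S \otimes \id) \circ \Delta = \eta \circ \gve$, collapsing the whole expression to the identity. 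Read backwards through $\eta$, this same computation delivers the cocommutative cyclicity of $\tau_\coad$ that was deferred in the proof of Lemma \ref{coad}.

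The hard part is purely computational: the Sweedler bookkeeping in verifying the intertwining of $\eta$ at the external cofaces, and in chasing the iterated $\tau_\coalg^{n+1}$ under cocommutativity, is lengthy but conceptually routine. The essential structural ingredient is, in both steps, the pair of identities $m \circ (S \otimes \id) \circ \Delta = \eta \circ \gve = m \circ (\id \otimes S) \circ \Delta$, which is precisely what forces the antipode into play in this dual, row-side picture and which explains why the para-cocyclic phenomenon of Proposition \ref{columns} admits a mirror on the Cartier side.
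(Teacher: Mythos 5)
Your proposal follows essentially the same route as the paper: define $\tau_\coalg := \eta^{-1}\circ\tau_\coad\circ\eta$, use Lemma \ref{coad} (with the tacit compatibility of $\eta$ with the cosimplicial structures, which you rightly flag as the point needing the antipode identities), and then establish cyclicity under cocommutativity by computing $\tau_\coalg^{n+1}$ explicitly, which is exactly the paper's formula \eqref{schonwiederkrank} with its $S^2$-factors and telescoping Sweedler legs. Your closing observation that this computation, read back through $\eta$, also settles the deferred cyclicity claim of Lemma \ref{coad} is precisely the remark made in the paper's proof.
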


\begin{proof}
This is an immediate consequence of \rmref{coalg} and Lemma \ref{coad}. Since we did not show the statement about cyclicity and cocommutativity there (as we do not need an explicit expression for $\tau_\coad^{n+1}$ in what follows), we will do this here. One computes (after a while):
\begin{equation}
\label{schonwiederkrank}
\begin{split}
&(\tau_\coalg^{n+1} f)(u^1, \ldots, u^p)  
\\
&
=  \big(u^1_{(1)} \cdots u^p_{(1)} S(u^1_{(4)} \cdots u^p_{(4)})\big) \lact \big( (S^2 \otimes \cdots \otimes S^2) f(u^1_{(3)}, \ldots, u^p_{(3)}) \big) 
\\
&
\quad
\ract  \big(S(u^1_{(2)} \cdots u^p_{(2)}) u^1_{(5)} \cdots u^p_{(5)}\big),
\end{split}
\end{equation}
from which the last statement (as well as the one in Lemma \ref{coad}) is then obvious.
\end{proof}

\subsection{The cyclic structure on the diagonal}
\label{yappae}

By a {\em bi-para-cocyclic $\K$-module} we mean a double complex in which both the rows and the columns are para-cocyclic $\K$-modules, and where all vertical structure maps commute with all horizontal ones.

As in \cite[p.~164]{GetJon:TCHOCPA}, we call a {\em cylindrical} $\K$-module a bi-para-cocyclic $\K$-module in which for the respective vertical and horizontal cocyclic operators in degree $n$ additionally $\tau_v^{n+1} \circ \tau_h^{n+1} = \id$ is satisfied.

\begin{theorem}
\label{backspace}
For any Hopf algebra over a field with involutive antipode, the Gerstenhaber-Schack double complex defines a cylindrical $\K$-module. Therefore, 
the cocyclic operator 
$$
\tau_\diag  := \tau_\alg \circ \tau_\coalg = \tau_\coalg \circ \tau_\alg
$$ 
completes the cosimplicial $\K$-module $\big(C^\bullet_\diag(H), \gd^\diag, \gs^\diag \big)$ from \rmref{gazette} to a cocyclic $\K$-module. 
\end{theorem}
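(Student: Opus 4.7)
The plan is to combine the vertical and horizontal para-cocyclic structures supplied by Propositions \ref{columns} and \ref{rows} into a cylindrical structure on the Gerstenhaber-Schack bicomplex, and then invoke the standard cylinder construction to pass to a genuine cocyclic structure on the diagonal.
First, I would verify that the quadruple $(\gd^v_\bull,\gs^v_\bull,\gd^h_\bull,\gs^h_\bull,\tau_\alg,\tau_\coalg)$ endows $C^{\bullet\bullet}_{GS}(H,H)$ with a bi-para-cocyclic structure: the bi-cosimplicial commutations are already implicit in \rmref{hihihaha}, so what remains is to check that $\tau_\alg$ commutes with each of $\gd^h_i$, $\gs^h_j$, and with $\tau_\coalg$, and symmetrically that $\tau_\coalg$ commutes with the vertical structure maps. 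This reduces to a direct but routine Sweedler calculation, organised by the observation that vertical operators act on the tensor factors of the input while horizontal ones act on the tensor factors of the output; the various $(H,H)$-bi(co)module compatibilities built into the adjoint and coadjoint (co)actions then make most terms commute essentially by formal inspection.

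Second, and this is the computational heart, I would establish the cylindrical identity
\[
\tau_\alg^{n+1}\circ\tau_\coalg^{n+1}=\id
\]
in degree $n$. The formulas \rmref{pausenscheibe} and \rmref{schonwiederkrank} both contain an internal $S^{\pm 2}$ applied to one Sweedler copy of the input, flanked by left and right diagonal actions of products of the form $S^{\mp 1}(u^1_{(i)}\cdots u^n_{(i)})\cdot u^1_{(j)}\cdots u^n_{(j)}$. Under the hypothesis $S^2=\id$, the inner $S^{\pm 2}$ disappears and $S^{-1}=S$, so the two operators reduce to nearly symmetric triple-Sweedler expressions. Composing $\tau_\alg^{n+1}$ after $\tau_\coalg^{n+1}$ and expanding the coproduct, the outer antipode products of one operator telescope against those of the other via the antipode axiom $\sum x_{(1)}S(x_{(2)})=\gve(x)=\sum S(x_{(1)})x_{(2)}$ applied factorwise, eventually leaving $f(u^1,\ldots,u^n)$ untouched; this is precisely the identity claimed.

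Third, I would deduce the diagonal cocyclic structure. By the bi-para-cocyclic property just verified, $\tau_\alg$ and $\tau_\coalg$ commute, so $\tau_\diag:=\tau_\alg\circ\tau_\coalg=\tau_\coalg\circ\tau_\alg$ is well defined on $C^\bullet_\diag(H)$, and the cylindrical identity immediately gives $\tau_\diag^{n+1}=\tau_\alg^{n+1}\circ\tau_\coalg^{n+1}=\id$ in degree $n$. Compatibility of $\tau_\diag$ with the diagonal cofaces and codegeneracies \rmref{trockenfirma} follows formally from $\gd^\diag_i=\gd^v_i\circ\gd^h_i$, $\gs^\diag_j=\gs^v_j\circ\gs^h_j$, and the individual para-cocyclic relations obeyed by $(\tau_\alg,\gd^v,\gs^v)$ and $(\tau_\coalg,\gd^h,\gs^h)$; this is exactly the standard Getzler-Jones observation that the diagonal of a cylindrical module is cocyclic.

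The main obstacle is clearly the second step: tracking the Sweedler indices in $\tau_\alg^{n+1}\circ\tau_\coalg^{n+1}$ demands care, since each operator already produces five Sweedler copies per argument, so the composition involves a priori many levels of coproducts before any cancellation. An alternative that might be cleaner in writing would be to carry out the computation upstairs, i.e.\ in the adjoint/coadjoint picture, and transport back along the cochain isomorphisms $\xi$ and $\eta$ from \rmref{sesam1} and \rmref{mases1}; however, the essential combinatorics of antipode telescoping, powered by the hypothesis $S^2=\id$, cannot be circumvented.
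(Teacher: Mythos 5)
Your proposal is correct and follows essentially the same route as the paper: establish the bi-para-cocyclic (cylindrical) structure, prove $\tau_\alg^{n+1}\circ\tau_\coalg^{n+1}=\id$ in degree $n$ from the explicit formulas \rmref{pausenscheibe} and \rmref{schonwiederkrank}, and conclude cocyclicity of the diagonal \`a la Getzler--Jones. The only cosmetic difference is that you impose $S^2=\id$ from the outset (note the cancellations then also need the $S^{-1}$-form of the antipode axiom, which coincides with the usual one precisely because $S=S^{-1}$), whereas the paper carries out the composition for a general invertible antipode, using $S^2(u_{(2)})S(u_{(1)})=S(u_{(2)})S^2(u_{(1)})=\gve(u)$ to find that the composite is conjugation by $S^2$, and only then invokes involutivity.
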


\begin{proof}
For an $f \in C^n_\diag(H)$, the cocyclic operator $\tau_\diag$ after a straightforward computation comes out as:
\begin{equation}
\begin{split}
\label{duschdas}
(&\tau_\diag f)(u^1, \ldots, u^n) 
\\&
= 
 \Big(u^1_{(1)} \cdots u^{n-1}_{(1)} S\big(f^{(1)}\big(S^{-1}(u^1_{(3)} \cdots  u^{n-1}_{(3)} u^n_{(n)}), u^1_{(2)}, \ldots, u^{n-1}_{(2)}\big)\big)\Big) \lact 
\\
&
\qquad
 \big(f^{(2)}\big(S^{-1}(u^1_{(3)} \cdots u^{n-1}_{(3)} u^n_{(n)}), u^1_{(2)}, \ldots, u^{n-1}_{(2)} \big) u^n_{(1)}  \otimes \cdots 
\\
&
\qquad 
\otimes (f^{(n)}\big(S^{-1}(u^1_{(3)} \cdots u^{n-1}_{(3)} u^n_{(n)}), u^1_{(2)}, \ldots, u^{n-1}_{(2)}  \big) u^n_{(n-1)} \otimes 1\big). 
\end{split}
\end{equation}
By the very definition of $\tau_\diag$, it is already clear from \S\S\ref{yippie}--\ref{yuppue} that $\big(C^\bullet_\diag(H), \gd^\diag, \gs^\diag \big)$ is a para-cocyclic $\K$-module; it remains to show that 
$$
\tau_\diag^{n+1} = \tau_\alg^{n+1} \circ \tau_\coalg^{n+1}  = \tau_\coalg^{n+1} \circ \tau_\alg^{n+1} =  \id
$$ 
in degree $n$: using the expressions \rmref{pausenscheibe} along with \rmref{schonwiederkrank}, we compute for an $f \in C^n_\diag(H)$:
\begin{footnotesize}
\begin{equation*}
\begin{split}
& (\tau_\coalg^{n+1} \tau_\alg^{n+1} f)(u^1, \ldots, u^n)
\\
&
=  \big(u^1_{(1)} \cdots u^p_{(1)} S(u^1_{(4)} \cdots u^p_{(4)})\big) \lact 
\\
&
\qquad
\big( (S^2 \otimes \cdots \otimes S^2) (\tau_\alg^{n+1} f)(u^1_{(3)}, \ldots, u^p_{(3)}) \big) 
\ract  \big(S(u^1_{(2)} \cdots u^p_{(2)}) u^1_{(5)} \cdots u^p_{(5)}\big)
\\
&
=  
\big(u^1_{(1)} \cdots u^p_{(1)} S(u^1_{(8)} \cdots u^p_{(8)})\big) \lact 
\\
&
\qquad
\Big( (S^2 \otimes \cdots \otimes S^2) 
\Big(
 \big(u^1_{(7)} \cdots u^n_{(7)} S^{-1}(u^1_{(4)} \cdots u^n_{(4)})\big) \lact  f\big(S^{-2}(u^1_{(5)}), \ldots S^{-2}(u^n_{(5)})\big) 
\\
&
\qquad
\ract  \big(S^{-1}(u^1_{(6)} \cdots u^n_{(6)}) u^1_{(3)} \cdots u^n_{(3)}\big)
\Big)
\Big) 
\ract  \big(S(u^1_{(2)} \cdots u^p_{(2)}) u^1_{(9)} \cdots u^p_{(9)}\big)
\\
&
=  
\Big(u^1_{(1)} \cdots u^p_{(1)} S(u^1_{(8)} \cdots u^p_{(8)}) S^2(u^1_{(7)} \cdots u^n_{(7)}) S(u^1_{(4)} \cdots u^n_{(4)}) 
\Big)
\lact
\\
&
\qquad  
\Big((S^2 \otimes \cdots \otimes S^2) 
f\big(S^{-2}(u^1_{(5)}), \ldots S^{-2}(u^n_{(5)})\big)
\Big) 
\\
&
\qquad
\ract  \Big(S(u^1_{(6)} \cdots u^n_{(6)}) S^2(u^1_{(3)} \cdots u^n_{(3)}) S(u^1_{(2)} \cdots u^p_{(2)}) u^1_{(9)} \cdots u^p_{(9)}\Big)
\\
&
=  
(S^2 \otimes \cdots \otimes S^2) 
f\big(S^{-2}(u^1), \ldots S^{-2}(u^n)\big), 
\end{split}
\end{equation*}
\end{footnotesize}
where we used $S^2(u_{(2)}) S(u_{(1)}) = S(u_{(2)}) S^2(u_{(1)}) = \gve(u)$. If the antipode is involutive, the above expression obviously gives the identity on $C^n_\diag(H)$ and hence 
$
\tau_\diag^{n+1} =  \id.
$
\end{proof}

\begin{rem}
We want to underline that both $\tau_\alg$ and $\tau_\coalg$ were only para-cocyclic {\em unless} $H$ is cocommutative, but this restriction somewhat surprisingly disappears on the diagonal, which is precisely characteristic for a cylindrical $\K$-module. 
The condition of the antipode to be involutive can be replaced by the notion of a {\em modular pair in involution} in the sense of \cite{ConMos:CCAHA}, but we shall not pursue this generalisation here as this is not the main focus of our present considerations.
\end{rem}

\begin{rem}[{\bf Cyclic Gerstenhaber-Schack cohomology}]
\label{cyclicgs}
The cyclic operator $\tau_\diag$ yields as in \rmref{e-mantra} the {\em cyclic} (or {\em Connes'}) {\em boundary} 
$B =: B_\diag$.
In particular, if $S^2 = \id$, then one has $B^2_\diag =0$ and the triple $(C^\bullet_\diag(H), \gd^\diag, B_\diag)$ becomes a mixed complex. By the cyclic Eilenberg-Zilber theorem \cite[Thm.\ 3.1]{GetJon:TCHOCPA}, we know that there is a quasi-isomorphism of ``para-chain'' complexes between the total complex and the diagonal one (see {\em op.~cit.}~for all details), and it makes therefore sense to consider the cohomology of the complex 
$$
(C^\bullet_\diag(H)[[u]], \gd^\diag + u B_\diag), 
$$
where $u$ is a degree $+2$ variable, and call it {\em cyclic Gerstenhaber-Schack cohomology}.
\end{rem}

\begin{rem}
The cocyclic structure on the Gerstenhaber-Schack complex in Theorem \ref{backspace} will later on also automatically follow from Corollary \ref{forwardspace} implied by the existence of the structure of a cyclic operad on the family of $\K$-modules underlying the diagonal complex. 
\end{rem}

\section{Higher structures on the Gerstenhaber-Schack complex}
\label{waldkonzert}

In this section, we will approach the ($\K$-modules underlying the) Gerstenhaber-Schack complex from a slightly different point of view, {\em i.e.}, by exhibiting its operadic resp.\ cyclic operadic structure.

\subsection{The Gerstenhaber-Schack complex as an operad with multiplication}

One might be tempted to think that the operadic structure on the  ($\K$-modules underlying the) Gerstenhaber-Schack complex can be obtained by somehow composing and generalising the classical ones for the endomorphism resp.\ coendomorphism operad of the first column resp.\ row in \rmref{tauschetasche}, but things appear to be much more intricate involving the antipode, that is, probably would not work for merely a $\K$-bialgebra.

\begin{theorem}
\label{japonvuduciel1}
The collection $\{\Hom_\K(H^{\otimes n}, H^{\otimes n})\}_{n \geq 0}$ of $\K$-modules carries the structure of an operad with multiplication whose associated cochain complex is the Gerstenhaber-Schack complex.
\end{theorem}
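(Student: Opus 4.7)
The plan is to produce explicit partial composition maps
$$
\circ_i : \cO(m) \otimes \cO(n) \to \cO(m+n-1), \qquad 1 \leq i \leq m,
$$
on the collection $\cO(n) := \Hom_\K(H^{\otimes n}, H^{\otimes n})$, together with a distinguished multiplication element $\mu \in \cO(2)$ and a unit $e \in \cO(1)$, and then to verify that (a) the $\circ_i$ satisfy the May-type sequential and parallel operadic associativity axioms, (b) $\mu$ is (operadically) associative and unital with respect to $e$, and (c) the Hochschild-type cosimplicial structure induced by the triple $(\cO,\mu,e)$ coincides with $\big(C^\bullet_\diag(H),\gd^\diag,\gs^\diag\big)$ from \rmref{trockenfirma}.

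The natural candidates for the distinguished elements are $\mu := \gD \circ m_H \colon u\otimes v \mapsto (uv)_{(1)} \otimes (uv)_{(2)}$ in $\Hom_\K(H^{\otimes 2}, H^{\otimes 2})$ and $e := \id_H$. Comparison with the middle cofaces of \rmref{trockenfirma} immediately suggests that $f \circ_i \mu$ should be given by $(\id^{i-1}\otimes \gD \otimes \id^{m-i})\circ f\circ (\id^{i-1}\otimes m_H \otimes \id^{m-i})$, which in particular hits $\gd^\diag_i$ for $1 \leq i \leq n$. Extending this to a general $f \circ_i g$ is where the subtle point lies: the naive recipe of multiplying the $n$ outputs of $g$ into a single element, feeding the result into slot $i$ of $f$, and comultiplying the $i$-th output of $f$ back into $n$ factors does reproduce the middle cofaces, but it fails to account for the coadjoint twistings $u^0_{(2)}\lact f(u^1_{(2)},\ldots)$ and $f(\ldots)\ract u^n_{(1)}$ that appear in $\gd^\diag_0$ and $\gd^\diag_{n+1}$. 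These extreme cofaces have to come from operadic compositions $\mu \circ_j f$ for $j=1,2$, and matching them forces one to insert $S$- (and, via the invertibility hypothesis, possibly $S^{-1}$-)twisted Sweedler factors on the inputs of the block $(u^i,\ldots,u^{i+n-1})$ before applying $g$ and combining its outputs with those of $f$. It is precisely this correction that brings the antipode into the composition law, explains why only the diagonal and not the full bicomplex acquires an operad structure, and why a mere bialgebra does not suffice.

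Once the partial compositions are written down explicitly, part (c) reduces to a term-by-term comparison of $\mu\circ_1 f$, $f \circ_i \mu$ and $\mu \circ_2 f$ with $\gd^\diag_0$, $\gd^\diag_i$ and $\gd^\diag_{n+1}$ respectively, together with the analogous matching of codegeneracies via the unit $e$ and the counit axiom; the operadic associativity $\mu\circ_1 \mu = \mu \circ_2 \mu$ of the multiplicative element is immediate from associativity of $m_H$, coassociativity of $\gD$, and the bialgebra compatibility $\gD \circ m_H = (m_H \otimes m_H)\circ(\id\otimes\tau\otimes\id)\circ(\gD\otimes\gD)$ between them, while operadic unitality of $\mu$ with respect to $e$ is the (co)unit axiom.

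The main technical obstacle is the verification of the May-type sequential and parallel associativity of the $\circ_i$ themselves. After spelling out all Sweedler legs, the identities $(f\circ_i g)\circ_j h = f \circ_i (g \circ_{j-i+1} h)$ (in the appropriate index range) and the parallel counterpart become long chains of equalities that must be collapsed by repeated and careful use of the antipode axioms $S(u_{(1)})u_{(2)}=\gve(u)=u_{(1)}S(u_{(2)})$, the anti-(co)multiplicativity of $S$, and the bialgebra axiom, the point being that the $S$-twists introduced separately by the inner and outer compositions must cancel against one another in order for the two sides of each associativity axiom to agree. This calculation is lengthy but conceptually transparent, and it is exactly the step that would break down over a bare bialgebra: without an antipode the correction terms have nothing to annihilate, and the operadic structure simply does not exist.
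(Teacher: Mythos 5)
Your outline correctly identifies the shape of the argument the paper uses (explicit partial compositions on $\Hom_\K(H^{\otimes n},H^{\otimes n})$, the multiplication element $\mu=\gD\circ m_H$, matching of the induced cosimplicial structure with \rmref{trockenfirma}, and the essential role of the antipode), but as it stands it is a plan rather than a proof: the one piece of genuine mathematical content of the theorem, namely the explicit composition law, is never written down, and no operadic axiom is actually verified. The whole difficulty is concentrated in producing a formula for $f\circ_i g$ that simultaneously (a) reproduces \emph{all} cofaces of \rmref{trockenfirma} via $\mu\circ_2 f$, $f\circ_i\mu$, $\mu\circ_1 f$, and (b) satisfies the sequential and parallel associativity relations \rmref{danton}; asserting that the needed $S$-twists ``must cancel against one another'' after ``repeated and careful use of the antipode axioms'' is exactly the statement to be proved, not an argument for it. In the formula that actually works, Eq.~\rmref{aglio}, the correction is moreover \emph{not} placed where you guess it: $g$ is applied to untwisted Sweedler legs $u^i_{(i)},\ldots,u^{i+q-1}_{(i)}$, while a single factor $S^{-1}(u^i_{(i+1)}\cdots u^{p+q-1}_{(i+1)})$ acts via the left diagonal action $\lact$ on the entire tensor block containing the output of $g$ (right-acted by legs of the later inputs) together with products of legs of \emph{all} inputs $u^i,\ldots,u^{p+q-1}$ filling the slots $1,\ldots,i-1$, padded by $1^{\otimes p-i}$; a twist ``on the inputs of the block before applying $g$'' is a different ansatz, and you give no verification that any such variant satisfies \rmref{danton}. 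Since the paper's own associativity check occupies several pages of Sweedler calculus, this gap is not cosmetic.

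Two smaller points. You take the unit to be $e:=\id_H\in\cO(1)$, conflating it with the identity $\mathbb{1}$: in the operad-with-multiplication formalism used here one needs $e\in\cO(0)=\Hom_\K(\K,\K)\simeq\K$ (with the convention $\gD^{-1}:=\gve$ in the composition), because the codegeneracies are $\gs_j\gvf=\gvf\circ_{j+1}e$; with $e$ in arity one you cannot recover the codegeneracies of \rmref{trockenfirma}. Also, $\mu\circ_1\mu=\mu\circ_2\mu$ is not ``immediate from the bialgebra axiom'' in the abstract: it must be checked against the chosen composition \rmref{aglio}, where the antipode terms have to collapse (they do, by a short computation), so even this step presupposes the formula you have not supplied.
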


\begin{proof}
Recall first the notation for the (left or right) diagonal action $\lact$ resp.\ $\ract$ from \rmref{monoidal}. As in \S\ref{notation}, we also write $[a \otimes b][c \otimes d]$ to express the factorwise multiplication $ac \otimes bd$.

Define then the partial operadic composition for 
$f \in \Hom_\K(H^{\otimes p}, H^{\otimes p})$
and 
$g \in \Hom_\K(H^{\otimes q}, H^{\otimes q})$
as follows:
\begin{small}
\begin{equation}
\label{aglio}
\begin{split}
&(f \circ_i g)(u^1, \ldots, u^{p+q-1}) 
\\
&:= [(\id^{i-1} \otimes \Delta^{q-1} \otimes \id^{p-i}) f(u^1, \ldots, u^{i-1}, u^i_{(i+2)} \cdots  u^{i+q-1}_{(i+2)}, u^{i+q}_{(i+2)}, \ldots,  u^{p+q-1}_{(i+2)})]
\\
& 
\qquad
[S^{-1}(u^{i}_{(i+1)} \cdots u^{p+q-1}_{(i+1)}) \lact (u^{i}_{(1)} \cdots u^{p+q-1}_{(1)} \otimes \cdots \otimes u^{i}_{(i-1)} \cdots u^{p+q-1}_{(i-1)} 
\\
& 
\qquad 
\otimes g(u^{i}_{(i)}, \ldots, u^{i+q-1}_{(i)}) \ract (u^{i+q}_{(i)} \cdots u^{p+q-1}_{(i)})) \otimes 1^{\otimes p-i}],
\end{split}
\end{equation}
\end{small}
where $\Delta^0 := \id$, $\Delta^1 := \Delta$, $\Delta^2 := (\Delta \otimes \id)\Delta =  (\id \otimes \Delta)\Delta$, and so on.
If $g \in \Hom_\K(\K, \K) \simeq \K$
is an element of degree zero, that is, an element in $\K$, this formula has to be read by putting $\Delta^{-1} := \gve$, or rather
\begin{equation*}
\begin{split}
&(f \circ_i g)(u^1, \ldots, u^{p-1}) 
= (\id^{i-1} \otimes \gve \otimes \id^{p-i}) f(u^1, \ldots, u^{i-1}, \eta(g), u^{i}, \ldots,  u^{p-1}).
\end{split}
\end{equation*}
To check that this indeed fulfils the associativity axioms of the partial composition of an operad is an ordeal but nevertheless straightforward by enhanced Hopf algebra yoga: in order to avoid panic attacks among our readership, the full proof is moved to Appendix \S\ref{panic}, and we exemplify this here by a low degree computation: for example, let $f\in \Hom_\K(H^{\otimes 2}, H^{\otimes 2})$, $ g \in \Hom_\K(H^{\otimes 3}, H^{\otimes 3})$, and $h \in \Hom_\K(H, H)$, as well as $i= 2$, $j=3$. Then for $u, v,w, x \in H$, one computes

\begin{footnotesize}
\begin{equation*}
\begin{split}
\big(&(f \circ_2 g) \circ_3 h)(u, v, w, x) 
\\
&= [(f \circ_2 g)(u, v, w_{(5)}, x_{(5)})]
[S^{-1}( w_{(4)} x_{(4)}) \lact (w_{(1)} x_{(1)} \otimes  w_{(2)} x_{(2)}
\otimes h( w_{(3)}) \ract x_{(3)}) \otimes 1^{\otimes 2}]
\\
&=
 \big[[(\id \otimes \Delta^{2}) f(u, v_{(4)} w_{(8)} x_{(8)})]
[S^{-1}(v_{(3)} w_{(7)} x_{(7)}) \lact 
(v_{(1)} w_{(5)} x_{(5)}
\otimes g(v_{(2)}, w_{(6)}, x_{(6)}))]
\big]
\\
&
\qquad
\big[S^{-1}(w_{(4)} x_{(4)}) \lact 
(w_{(1)} x_{(1)} \otimes w_{(2)} x_{(2)}  
\otimes h(w_{(3)}) 
\ract x_{(3)}) \otimes 1\big]
\\
&=
\big[\big(\id \otimes \Delta^{2}\big)  
f(u, v_{(4)} w_{(7)} x_{(7)}) \big]
\big[S^{-1}(v_{(3)} w_{(6)} x_{(6)}) \lact (
v_{(1)} \otimes g(v_{(2)}, w_{(5)}, 
x_{(5)}))\big] 
\\
&
\qquad
\big[w_{(1)} x_{(1)} \otimes 
S^{-1}(w_{(4)} x_{(4)}) \lact
(w_{(2)} x_{(2)} \otimes h(w_{(3)}) \ract x_{(3)}) \otimes 1\big]
\\
&=
\big[\big(\id \otimes \Delta^{2}\big)  
f(u, v_{(4)} w_{(7)} x_{(7)}) \big]\big[S^{-1}(v_{(3)} w_{(6)} x_{(6)}) \lact
\\
&
\qquad 
\Big(v_{(1)} w_{(1)}  x_{(1)} \otimes 
[g(v_{(2)}, w_{(5)}, x_{(5)}) 
\ract (S^{-1}(w_{(4)}x_{(4)}))]
[w_{(2)} x_{(2)} \otimes h(w_{(3)}) 
\ract x_{(3)} \otimes 1]\Big)\big].
\end{split}
\end{equation*}
\end{footnotesize}
On the other hand, we have
\begin{footnotesize}
\begin{equation*}
\begin{split}
\big(&f \circ_2 (g \circ_2 h))(u,v,w,x)
\\
&= [(\id \otimes \Delta^{2}) f(u, v_{(4)} w_{(4)} x_{(4)})]
[S^{-1}(v_{(3)} w_{(3)} x_{(3)}) \lact (v_{(1)} w_{(1)} x_{(1)} \otimes (g \circ_{2} h)(v_{(2)}, w_{(2)}, x_{(2)}))]
\\
&=
\big[(\id \otimes \Delta^{2})f(u, v_{(4)} w_{(7)} x_{(7)})\big]
\big[S^{-1}(v_{(3)} w_{(6)} x_{(6)}) \lact 
\\
&
\qquad
\Big(v_{(1)} w_{(1)}x_{(1)} \otimes 
[g(v_{(2)}, w_{(5)}, x_{(5)})]
[S^{-1}(w_{(4)} x_{(4)}) \lact (w_{(2)} x_{(2)}
\otimes h(w_{(3)}) \ract x_{(3)}) \otimes 1]
\Big)\big]
\\
&=
\big[\big(\id \otimes \Delta^{2}\big)  
f(u, v_{(4)} w_{(7)} x_{(7)}) \big]\big[S^{-1}(v_{(3)} w_{(6)} x_{(6)}) \lact
\\
&
\qquad 
\Big(v_{(1)} w_{(1)}  x_{(1)} \otimes 
[g(v_{(2)}, w_{(5)}, x_{(5)}) 
\ract (S^{-1}(w_{(4)}x_{(4)}))]
[w_{(2)} x_{(2)} \otimes h(w_{(3)}) 
\ract x_{(3)} \otimes 1]\Big)\big],
\end{split}
\end{equation*}
\end{footnotesize}
which is the same expression as above.

 To see that we moreover deal with an operad with multiplication, define 
the multiplication element $ \mu \in \Hom_\K(H^{\otimes 2}, H^{\otimes 2})$
by
\begin{equation}
\label{stabiloboss}
\mu(u,v) = \Delta(uv) = u_{(1)}v_{(1)} \otimes u_{(2)}v_{(2)},
\end{equation}
along with the identity $\mathbb{1} := \id_H \in \Hom_\K(H, H)$ and the unit $e := 1_\K \in \Hom_\K(\K, \K) \simeq \K$. From the counitality of the underlying coalgebra, one immediately obtains $\mu \circ_1 e = \mu \circ_2 e = \mathbb{1}$ and likewise, for $u,v,w \in H$, we easily compute:
\begin{equation*}
\begin{split}
(&\mu \circ_1 \mu)(u,v,w)  
\\
&= 
[(\Delta \otimes \id) \mu(u_{(3)} v_{(3)}, w_{(3)})][S^{-1}(u_{(2)} v_{(2)}w_{(2)}) \lact  \mu(u_{(1)}, v_{(1)})  \ract w_{(1)}
\otimes 1]
\\
&=  
[(\Delta \otimes \id) \mu(u v, w)]
[1 \otimes 1 \otimes 1]
\\
&= u_{(1)} v_{(1)}w_{(1)} \otimes u_{(2)} v_{(2)}w_{(2)} \otimes u_{(3)} v_{(3)}w_{(3)}
\\
&=
[(\id \otimes \Delta) \mu(u, vw)]
[1 \otimes 1 \otimes 1]
%
\\
&
= (\mu \circ_2 \mu)(u,v,w).
\end{split}
\end{equation*}

As for the second statement, it is a straightforward check that the differential \rmref{hihihaha} on the diagonal complex is indeed as in 
\rmref{immaginedellacitta`} given by
$$
\delta^\diag f = (-1)^{p+1} \{\mu, f\},
$$
for $f \in \Hom_\K(H^{\otimes p}, H^{\otimes p})$,
where $\{\cdot, \cdot\}$ is defined as in \rmref{naemlichhier}. Since to see this nearly the same computation is required as the one we shall execute in the proof of Lemma \ref{haltnull}, we skip it at this point.
This completes the proof.
\end{proof}

\begin{rem}
One can probably relax the condition of the antipode to be invertible and replace the operadic composition \rmref{aglio} by its opposite which would use the antipode itself instead of its inverse. We do not pursue this generalisation here as in the next section we are mainly interested in the case of an involutive antipode.
\end{rem}

The following is then automatic by a well-known result, see \S\ref{pamukkale1} and the references given there.

\begin{cor}
\label{fegato}
For any Hopf algebra with invertible antipode, its Gerstenhaber-Schack cohomology is a Gerstenhaber algebra.
\end{cor}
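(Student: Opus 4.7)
The plan is essentially to invoke a classical meta-theorem: every operad with multiplication induces a Gerstenhaber algebra on the cohomology of its associated Hochschild-type cochain complex. Since Theorem \ref{japonvuduciel1} has already produced an operad with multiplication structure $(\cO_\diag, \mu, e, \mathbb{1})$ on the family $\{\Hom_\K(H^{\otimes n}, H^{\otimes n})\}_{n \geq 0}$ whose cochain complex (that is, the one obtained from the canonical cosimplicial structure coming from $\mu$) is precisely the Gerstenhaber-Schack diagonal complex, the corollary is a direct application of this principle and requires no Hopf-algebraic work beyond what is already done.

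Concretely, on any operad with multiplication one has the brace $f \bar\circ g := \sum_{i=1}^{p}(-1)^{(i-1)(q-1)} f \circ_i g$, and one defines the Gerstenhaber bracket by $\{f,g\} := f \bar\circ g - (-1)^{(p-1)(q-1)} g \bar\circ f$ and the cup product by $f \smile g := (\mu \circ_1 f) \circ_{p+1} g$. The cosimplicial differential satisfies $\delta f = (-1)^{p+1}\{\mu, f\}$, a fact already noted and used inside the proof of Theorem \ref{japonvuduciel1}. The classical result of McClure-Smith \cite{McCSmi:ASODHCC} (building on Gerstenhaber-Schack \cite{GerSch:ABQGAAD}) then asserts that $\smile$ and $\{-,-\}$ descend to $H^\bullet(\cO_\diag, \delta)$, are graded commutative and graded Lie of degree $-1$ respectively on cohomology, and satisfy the graded Leibniz compatibility. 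Since $H^\bullet(\cO_\diag, \delta) \simeq H^\bullet_{GS}(H,H)$, this yields the desired Gerstenhaber algebra structure.

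There is essentially no obstacle: the only content of the corollary is packaging Theorem \ref{japonvuduciel1} into the McClure-Smith framework. The bracket and cup product obtained this way are determined by the explicit operadic composition \rmref{aglio} and the multiplication element \rmref{stabiloboss}, and a sanity check (to be performed elsewhere in the paper) can be carried out in the case $H = \mathcal{U}_\mathfrak{g}$ to recover the formula \rmref{dayne}. The only delicate aspect worth flagging in the write-up is verifying that the cochain complex obtained from the cosimplicial structure of the operad with multiplication truly matches $(C^\bullet_\diag(H), \delta^\diag)$ with the differential \rmref{hihihaha}, but this identification has already been reduced inside the proof of Theorem \ref{japonvuduciel1} to the identity $\delta^\diag f = (-1)^{p+1}\{\mu, f\}$, so no further computation is required here.
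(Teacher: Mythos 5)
Your proposal is correct and takes essentially the same route as the paper: there the corollary is declared ``automatic'' from Theorem \ref{japonvuduciel1} together with the classical operad-with-multiplication result of McClure--Smith and Gerstenhaber--Schack recalled in \S\ref{pamukkale1}, with the cup product and bracket given by \rmref{nuvole} and \rmref{naemlichhier} for the composition \rmref{aglio}, and with the identification $\gd^\diag f=(-1)^{p+1}\{\mu,f\}$ already handled inside the proof of Theorem \ref{japonvuduciel1} (and Lemma \ref{haltnull}), exactly as you flag. Note only that your cup product $(\mu\circ_1 f)\circ_{p+1} g$ coincides with the paper's $(\mu\circ_2 g)\circ_1 f$ by the operad associativity relations \rmref{danton}, so the two write-ups agree.
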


Explicitly, the cup product and the Gerstenhaber bracket are given by Eqs.~\rmref{nuvole} and \rmref{naemlichhier}, respectively, with the operadic composition of Eq.~\rmref{aglio}; see also Lemma \ref{haltnull} below.

\begin{rem}
The question of the existence of a (nontrivial) Gerstenhaber algebra structure on Gerstenhaber-Schack cohomology has been asked in the literature several times before \cite{FarSol:GSOTCOHA, GerSch:ABQGAAD, Tai:IHBCOIDHAAGCOTYP}. In \S\ref{finite}, we shall show that if $H$ is finitely generated as a module over $\K$, then the Gerstenhaber bracket from Corollary \ref{fegato} vanishes in cohomology and one rather obtains an $e_3$-algebra structure, that is, a bracket of degree $-2$ which is compatible with the cup product (see, {\em e.g.}, \cite[Def.~5.1]{SalWah:FDOABVA} for a definition).
\end{rem}

\begin{example}[Universal enveloping algebras of Lie algebras]\label{pranzo-di-natale}
A simple computation for the enveloping algebra of a Lie algebra in, say, degree one, shows that the Gerstenhaber bracket from Corollary \ref{fegato} on Gerstenhaber-Schack cohomology does {\em not} vanish, see \rmref{dayne}. 
However, if the underlying Lie algebra is abelian, that is, the enveloping algebra both commutative and cocommutative, then the bracket vanishes again. 
\end{example}

\begin{rem}
In \S\ref{klempnerimmernochda} we are going to show that this Gerstenhaber structure is actually part of the stronger notion of a Batalin-Vilkoviski\u\i\ (or BV) structure.
\end{rem}

\begin{lem}
\label{haltnull}
The cup product is explicitly given by the formula 
\begin{equation}
\label{herbst}
\begin{split}
(&f \smallsmile g)(u^1,\ldots, u^{p+q}) 
\\
&
= f(u^1_{(1)}, \ldots, u^p_{(1)}) \ract (u^{p+1}_{(1)} \cdots u^{p+q}_{(1)}) 
\otimes (u^1_{(2)} \cdots u^p_{(2)}) \lact g(u^{p+1}_{(2)}, \ldots, u^{p+q}_{(2)}),
\end{split}
\end{equation}
for $f \in \Hom_\K(H^{\otimes p}, H^{\otimes p})$ 
and $g \in  \Hom_\K(H^{\otimes q}, H^{\otimes q})$.
\end{lem}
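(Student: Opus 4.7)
The plan is to apply the operadic composition formula \rmref{aglio} to the standard expression of the cup product in an operad with multiplication, namely $f \smallsmile g := (\mu \circ_1 f) \circ_{p+1} g$ as in \rmref{nuvole}, and to simplify the resulting expression. By the operadic associativity established in Theorem \ref{japonvuduciel1} (see also Appendix \S\ref{panic}) the same cup product is also given by $(\mu \circ_2 g) \circ_1 f$, and one may work with whichever presentation turns out to be notationally lighter; I would use the first form for concreteness.

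First I would compute the intermediate element $\mu \circ_1 f \in \Hom_\K(H^{\otimes(p+1)}, H^{\otimes(p+1)})$ by plugging $F = \mu$, $G = f$, $P = 2$, $Q = p$ and $i = 1$ into \rmref{aglio}. Using $\mu(a,b) = \Delta(ab)$ and coassociativity, the first factor in \rmref{aglio} rewrites cleanly as the iterated coproduct of the product $v^1_{(3)} \cdots v^{p+1}_{(3)}$ distributed over $p+1$ tensor slots, while the second factor contains a single antipode term $S^{-1}(v^1_{(2)} \cdots v^{p+1}_{(2)})$ acting diagonally on a tensor whose last slot is just $1$.

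Next I would apply $\circ_{p+1} g$ to this intermediate element by a second application of \rmref{aglio}, this time with $P = p+1$, $Q = q$ and $i = p+1$. Because the insertion happens at the last slot, the formula simplifies: the ``middle dots'' part of \rmref{aglio} collects the $p$ slots already present in $\mu \circ_1 f$, and a further antipode factor $S^{-1}(u^{p+1}_{(p+2)} \cdots u^{p+q}_{(p+2)})$ enters. After expanding everything and coassociating so that the Sweedler indices produced by the two nested compositions align, two simplifications occur in tandem: the two $S^{-1}$-factors each meet an adjacent coproduct piece and telescope away via
\[
u_{(1)} S^{-1}(u_{(2)}) = \gve(u)\,1 = S^{-1}(u_{(1)}) u_{(2)},
\]
while the remaining coproduct pieces regroup, by the bialgebra compatibility \rmref{monoidal} of the diagonal action with the product of $H$, into precisely $f(u^1_{(1)}, \ldots, u^p_{(1)}) \ract (u^{p+1}_{(1)} \cdots u^{p+q}_{(1)})$ in the first $p$ slots and $(u^1_{(2)} \cdots u^p_{(2)}) \lact g(u^{p+1}_{(2)}, \ldots, u^{p+q}_{(2)})$ in the last $q$ slots, which is the asserted formula.

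The only genuine obstacle is the Sweedler-index bookkeeping: each input $u^k$ is split into four or five components during the two nested applications of \rmref{aglio}, and the antipode cancellations become visible only after careful coassociation that realigns these components. However, no algebraic identities beyond coassociativity, the antipode axiom and the bialgebra compatibility of $H$ are used, so the verification is of the same nature as, but considerably lighter than, the associativity check already exemplified in the proof of Theorem \ref{japonvuduciel1} and fully carried out in Appendix \S\ref{panic}.
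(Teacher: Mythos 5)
Your plan is essentially the paper's own proof: a direct expansion of the operadic cup product via \rmref{aglio}, followed by coassociativity and antipode cancellations, so the approach is sound. Two small corrections are in order, though. First, the bracketing you start from, $(\mu \circ_1 f) \circ_{p+1} g$, is not \rmref{nuvole} verbatim; identifying it with the expression the paper actually expands, $(\mu \circ_2 g) \circ_1 f$, uses the \emph{parallel}-composition case of \rmref{danton} (the case $j \geq q+i$), which is exactly the part of the associativity check that the appendix leaves to the reader -- legitimate, since Theorem \ref{japonvuduciel1} asserts the full operad structure, but it is cleaner (and what the paper does) to expand $(\mu \circ_2 g) \circ_1 f$ directly; your route does have the pleasant feature that the intermediate element simplifies to $(\mu\circ_1 f)(u^1,\ldots,u^{p+1}) = f(u^1_{(1)},\ldots,u^p_{(1)})\ract u^{p+1}_{(1)} \otimes u^1_{(2)}\cdots u^{p+1}_{(2)}$ before the second insertion. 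Second, the cancellation identity you quote is wrong-sided: for an antipode that is merely invertible (the standing hypothesis here -- involutivity is not assumed for this lemma) one has $u_{(2)}S^{-1}(u_{(1)}) = \gve(u)1 = S^{-1}(u_{(2)})u_{(1)}$, whereas $u_{(1)}S^{-1}(u_{(2)}) = \gve(u)1$ would require $S^2 = \id$. Fortunately, the products that actually arise from \rmref{aglio} (components of the coproduct multiplying $S^{-1}$ of \emph{earlier} components from the left) have the correct orientation, so once you state the identity correctly the telescoping you describe does go through and yields \rmref{herbst}.
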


\begin{proof}
The cup product that belongs to the Gerstenhaber structure arising from an operad with multiplication is computed by the customary formula
$$
f \smallsmile g = (\mu \circ_2 g) \circ_1 f.
$$
Using \rmref{aglio} and the expression \rmref{stabiloboss} for the multiplication element $\mu$, we see by multiple coassociativity and the antipode identities for a Hopf algebra that
\begin{footnotesize}
\begin{equation*}
\begin{split}
(&(\mu \circ_2 g) \circ_1 f)(u^1, \ldots, u^{p+q}) 
\\
&= 
[(\Delta^{p-1} \otimes \id^{q}) \big((\mu \circ_2 g)(u^1_{(3)} \cdots  u^{p}_{(3)}, u^{p+1}_{(3)}, \ldots,  u^{p+q}_{(3)})\big)]
\\
& 
\qquad
[S^{-1}(u^1_{(2)} \cdots u^{p+q}_{(2)}) \lact f(u^{1}_{(1)}, \ldots, u^{p}_{(1)}) \ract (u^{p+1}_{(1)} \cdots u^{p+q}_{(1)}) \otimes 1^{\otimes q}]
\\
&= 
[(\Delta^{p-1} \otimes \id^{q}) 
\big(
[(\id \otimes \Delta^{q-1}) \mu(u^1_{(3)} \cdots  u^{p}_{(3)}, u^{p+1}_{(6)} \cdots  u^{p+q}_{(6)})]
\\
&
\qquad
[S^{-1}( u^{p+1}_{(5)} \cdots  u^{p+q}_{(5)}) \lact 
(u^{p+1}_{(3)} \cdots  u^{p+q}_{(3)} \otimes g( u^{p+1}_{(4)}, \ldots,  u^{p+q}_{(4)}))]\big)]
\\
& 
\qquad
[S^{-1}(u^1_{(2)} \cdots u^{p+q}_{(2)}) \lact f(u^{1}_{(1)}, \ldots, u^{p}_{(1)}) \ract (u^{p+1}_{(1)} \cdots u^{p+q}_{(1)}) \otimes 1^{\otimes q}]
\\
&= 
[u^1_{(3)} \cdots  u^{p+q}_{(3)} \otimes \cdots \otimes u^1_{(p+2)} \cdots  u^{p+q}_{(p+2)}  \otimes (u^1_{(p+3)} \cdots  u^{p}_{(p+3)}) 
\lact  g( u^{p+1}_{(p+3)}, \ldots,  u^{p+q}_{(p+3)})]
\\
& 
\qquad
[S^{-1}(u^1_{(2)} \cdots u^{p+q}_{(2)}) \lact f(u^{1}_{(1)}, \ldots, u^{p}_{(1)}) \ract (u^{p+1}_{(1)} \cdots u^{p+q}_{(1)}) \otimes 1^{\otimes q}]
\\
&= 
f(u^{1}_{(1)}, \ldots, u^{p}_{(1)}) \ract (u^{p+1}_{(1)} \cdots u^{p+q}_{(1)}) \otimes (u^1_{(2)} \cdots  u^{p}_{(2)}) 
\lact  g( u^{p+1}_{(2)}, \ldots,  u^{p+q}_{(2)}),
\end{split}
\end{equation*}
\end{footnotesize}
which is the desired formula.
\end{proof}

\begin{rem}
This formula coincides with the cup product given by Taillefer \cite[Prop.\ 4.1 \& Rem.\ 4.2]{Tai:IHBCOIDHAAGCOTYP} on the total complex when restricted to the diagonal.
\end{rem}

\subsection{The Gerstenhaber-Schack complex as a cyclic operad with multiplication}
\label{klempnerimmernochda}

Since we showed in the preceding section that the cosimplicial structure on the family $\{\Hom_\K(H^{\otimes n}, H^{\otimes n})\}_{n \geq 0}$ of $\K$-modules induced by its operadic composition \rmref{aglio} coincides with the one of the diagonal complex described in \rmref{trockenfirma}, we will from now on also use the operadic notation
$$
C_\diag(H)(n) := \Hom_\K(H^{\otimes n}, H^{\otimes n}), \qquad \forall n \geq 0.
$$
The aim of this section consists in showing that this operad with multiplication is in fact a cyclic operad with multiplication by means of the 
cyclic operator $\tau_\diag$ introduced in \rmref{duschdas}. As a consequence, we have by a (by now) well-known theorem that the Gerstenhaber algebra structure on cohomology mentioned in Corollary \ref{fegato} is in particular Batalin-Vilkoviski\u\i, that is, fulfils the equation
\begin{equation*}
\{f, g\} = - (-1)^f B f \smallsmile g - f \smallsmile B g + (-1)^f B(f \smallsmile g),
\end{equation*}
for $f,g \in C_\diag(H)(p)$, 
see Appendix \ref{pamukkale2}.

\begin{theorem}
\label{japonvuduciel2}
For any Hopf algebra with involutive antipode, the diagonal cochain complex constitutes a cyclic operad with multiplication. 
\end{theorem}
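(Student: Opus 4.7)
The plan is to build on Theorems \ref{japonvuduciel1} and \ref{backspace}: the operadic structure with partial compositions $\circ_i$ is given in \rmref{aglio}, the cocyclic operator $\tau_\diag$ is given in \rmref{duschdas}, and $\tau_\diag^{n+1}=\id$ follows from involutivity of $S$. Thus, to upgrade the operad with multiplication from Theorem \ref{japonvuduciel1} to a cyclic operad with multiplication, it remains to verify the three standard compatibility axioms of Getzler--Kapranov: (i) $\tau_\diag(\mu)=\mu$ for the multiplication element \rmref{stabiloboss}; (ii) $\tau_\diag(f\circ_i g)=\tau_\diag(f)\circ_{i-1} g$ for $2\le i\le p$, where $f\in C_\diag(H)(p)$ and $g\in C_\diag(H)(q)$; and (iii) the wrap-around axiom $\tau_\diag(f\circ_1 g)=\tau_\diag(g)\circ_p\tau_\diag(f)$.

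For (i), I would just specialise \rmref{duschdas} to $n=2$ and $f=\mu$. The term $f^{(1)}(S^{-1}(u_{(3)}v_{(n)}),u_{(2)})$ becomes $S^{-1}(u_{(3)}v_{(n)})_{(1)}u_{(2),(1)}$, so after applying $S$ on the outside and using antipode identities, the left $\lact$-action collapses; one then uses $S^2=\id$ (and hence $S\circ S^{-1}=\id$) to telescope the remaining Sweedler legs and recover $\mu(u,v)$. For (ii), I would expand both sides with \rmref{aglio} and \rmref{duschdas}: since the slot $i$ with $i\ge 2$ does not interact with the first slot, the inverse antipode appearing in \rmref{aglio} sits on legs that are disjoint from those rotated by $\tau_\diag$, and the identity follows from multiple coassociativity, multiplicativity of $\Delta$, and the antipode axioms. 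I expect axiom (ii) to hold already at the para-cocyclic level (that is, for any Hopf algebra with invertible antipode), providing a useful sanity check.

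The main obstacle will be (iii), the wrap-around axiom, where slot $1$ of $f\circ_1 g$ is precisely the slot that $\tau_\diag$ rotates out. Here the $S^{-1}$ built into the composition \rmref{aglio} and the $S$ built into the cyclic operator \rmref{duschdas} collide, and the cancellations succeed only because $S^2=\id$. The strategy I would follow is to factor $\tau_\diag=\tau_\alg\circ\tau_\coalg$ as in Theorem \ref{backspace} and exploit the separate structural r\^oles of $\tau_\alg$ (governed by the adjoint action through the isomorphism $\xi$ of \rmref{sesam1}) and $\tau_\coalg$ (governed by the coadjoint coaction through the isomorphism $\eta$ of \rmref{mases1}). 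For $\tau_\coalg$, the compatibility with the $\Delta^{q-1}$-block and the tensor string of $u_{(i)}$'s in \rmref{aglio} follows from the definition of the coadjoint coaction \rmref{schnellschnell}, while for $\tau_\alg$ the compatibility with the $S^{-1}\lact(\cdots)$-block is dictated by the adjoint action. Carefully tracking the Sweedler legs of every $u^j$ through the $p+q-1$ outputs, and using the involutivity of $S$ exactly once to cancel the final $S\circ S^{-1}$, should yield the wrap-around identity. The calculation is ponderous but of the same flavour as the one carried out in the proof of Theorem \ref{backspace}, so I expect no conceptual surprise beyond the bookkeeping.

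Once (i)--(iii) are established, the theorem is proved. As remarked in the text, Menichi's theorem \cite[Thm.~1.4]{Men:BVAACCOHA} then yields the BV algebra structure on $H^\bullet_{GS}(H,H)$ claimed in Corollary \ref{nikolaus}, with BV operator $B=B_\diag$ of Remark \ref{cyclicgs} and Gerstenhaber bracket as in \rmref{naemlichhier}.
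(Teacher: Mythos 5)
Your outline is essentially the paper's own proof: the cyclic-operad structure is verified axiom by axiom, with \rmref{cycl3} already settled by Theorem \ref{backspace}, the compatibility $\tau_\diag\mu=\mu$ checked by a short direct computation, \rmref{cycl1} being the easy case, and the wrap-around identity \rmref{cycl2} being the substantial one. Be aware, though, that everything you defer with ``should yield the wrap-around identity'' is exactly where the mathematical content lies: the paper's Appendix \ref{panic} consists precisely of the general-arity expansion of both sides of \rmref{cycl2} using \rmref{aglio} and \rmref{duschdas}, and it does \emph{not} proceed via the factorisation $\tau_\diag=\tau_\alg\circ\tau_\coalg$ you propose. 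That factorisation is used in Theorem \ref{backspace} only to prove $\tau_\diag^{n+1}=\id$; since $\tau_\alg$ and $\tau_\coalg$ are separately only para-cocyclic and neither is individually compatible with the mixed algebra/coalgebra composition \rmref{aglio}, splitting the wrap-around check along that factorisation would itself require justification, so the safer (and the paper's) route is the brute-force Sweedler bookkeeping you describe as the fallback. Two small corrections: the wrap-around axiom reads $\tau_\diag(f\circ_1 g)=\tau_\diag(g)\circ_q\tau_\diag(f)$ with the index $q$ equal to the arity of $g$ (your $\circ_p$ is ill-typed when $p>q$); and the verification of $\tau_\diag\mu=\mu$ uses only $S\circ S^{-1}=\id$, i.e.\ invertibility of the antipode, not involutivity --- involutivity enters solely through \rmref{cycl3}, consistent with your (correct) expectation that \rmref{cycl1} and, in fact, also \rmref{cycl2} hold already at the para-cocyclic level.
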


\begin{proof}
To check that the cyclic operator \rmref{duschdas} is defining a cyclic operad with multiplication w.r.t.\ the operadic composition in \rmref{aglio} is a straightforward computation without any tricks, but nasty. 
As before, being merciful with our readers, we will move the full proof to Appendix \ref{panic} and verify at this point only a nontrivial example in low degrees. More precisely, in order to illustrate \rmref{cycl2}, let  $f \in C_\diag(H)(2)$ and $g \in C_\diag(H)(2)$ and note first that for $u,v,w \in H$, one has
\begin{footnotesize}
\begin{equation*}
\begin{split}
&S\big((f \circ_1 g)^{(1)}(u, v, w)\big) \lact \big( 
(f \circ_1 g)^{(2)}(u, v,w) \otimes (f \circ_1 g)^{(3)}(u, v, w)\big) 
\\
&=
S\big(g^{(1)}(u_{(1)}, v_{(1)})  w_{(1)} \big) \lact 
\\
&
\qquad
\Big( 
g^{(2)}(u_{(1)}, v_{(1)}) w_{(2)} 
\otimes \big(u_{(2)} v_{(2)} w_{(3)} S\big(f^{(1)}(u_{(3)} v_{(3)}, w_{(4)})\big)\big) \lact f^{(2)}(u_{(3)} v_{(3)}, w_{(4)})\Big).
\end{split}
\end{equation*}
\end{footnotesize}
Using this, one obtains
\begin{footnotesize}
\begin{equation*}
\begin{split}
(& \tau_\diag (f \circ_1 g))(u, v, w) 
\\
&= 
 \Big(u_{(1)} v_{(1)} S\big((f \circ_1 g)^{(1)}\big(S^{-1}(u_{(3)} v_{(3)} w_{(3)}), u_{(2)}, v_{(2)}\big)\big)\Big) \lact 
\\
&
\qquad
 \big((f \circ_1 g)^{(2)}\big(S^{-1}(u_{(3)} v_{(3)} w_{(3)}), u_{(2)}, v_{(2)} \big) w_{(1)}  
\\
&
\qquad
\otimes (f \circ_1 g)^{(3)}\big(S^{-1}(u_{(3)} v_{(3)} w_{(3)}), u_{(2)}, v_{(2)}  \big) w_{(2)} \otimes 1\big) 
\\
& 
=
\Big(u_{(1)} v_{(1)}S(v_{(2)})  
S\big(g^{(1)}(S^{-1}(u_{(5)} v_{(6)} w_{(3)})_{(1)}, u_{(2)})\big)\Big) \lact 
\Big( 
g^{(2)}(S^{-1}(u_{(5)} v_{(6)} w_{(3)})_{(1)}, u_{(2)}) v_{(3)} w_{(1)}  
\\
&
\qquad
\otimes \big(
S^{-1}(u_{(5)} v_{(6)}  w_{(3)})_{(2)} u_{(3)} v_{(4)} S\big(f^{(1)}(S^{-1}(u_{(5)} v_{(6)}  w_{(3)})_{(3)} u_{(4)}, v_{(5)})\big)\big) \lact 
\\
&
\qquad
\big( 
f^{(2)}(S^{-1}(u_{(5)} v_{(6)}  w_{(3)})_{(3)} u_{(4)}, v_{(5)})w_{(2)}  
\otimes 1 \big)
\Big)
\\
&=
 \Big(u_{(1)} S\big(g^{(1)}(S^{-1}(u_{(3)} v_{(6)} w_{(3)}), u_{(2)}\big)\Big) \lact 
\\
&
\qquad
\Big( 
g^{(2)}(S^{-1}(u_{(3)} v_{(6)} w_{(3)}), u_{(2)}) v_{(1)} w_{(1)}  
\otimes \big(
S^{-1}(v_{(5)} w_{(4)}) v_{(2)} S\big(f^{(1)}(S^{-1}(v_{(4)} w_{(3)}), v_{(3)})\big)\big) \lact 
\\
&
\qquad
\big( 
f^{(2)}(S^{-1}(v_{(4)} w_{(3)}),  v_{(3)})w_{(2)} \otimes 1 \big)
\Big).
\end{split}
\end{equation*}
\end{footnotesize}
On the other hand, 
\begin{footnotesize}
\begin{equation*}
\begin{split}
(& \tau_\diag g \circ_2 \tau_\diag f)(u, v, w) 
\\
&= [(\id \otimes \Delta) \tau_\diag g(u, v_{(4)} w_{(4)})]
[S^{-1}(v_{(3)} w_{(3)}) \lact (v_{(1)} w_{(1)} \otimes \cdots \otimes u^{q}_{(q-1)} \otimes \tau_\diag f(v_{(2)}, w_{(2)}))]
\\
&= [(\id \otimes \Delta) 
\Big(
 \Big(u_{(1)} S\big(g^{(1)}\big(S^{-1}(u_{(3)} v_{(7)} w_{(6)}), u_{(2)}\big)\big)\Big) \lact 
\\
&
\qquad
 \big(g^{(2)}
\big(S^{-1}(u_{(3)} v_{(7)} w_{(6)}), u_{(2)}\big)
   v_{(6)} w_{(5)} \otimes 1\big) 
\Big)
]
[S^{-1}(v_{(5)} w_{(4)}) \lact 
\\
&
\qquad
\Big(v_{(1)} w_{(1)} \otimes 
\Big(v_{(2)} S\big(f^{(1)}\big(S^{-1}(v_{(4)} w_{(3)}), v_{(3)}\big)\big)\Big) \lact 
 \big(f^{(2)}\big(S^{-1}(v_{(4)} w_{(3)}), v_{(3)}\big)
w_{(2)} \otimes 1\big)\Big) 
]
\\
&=
 \Big(u_{(1)} S\big(g^{(1)}(S^{-1}(u_{(3)} v_{(6)} w_{(3)}), u_{(2)}\big)\Big) \lact 
\\
&
\qquad
\Big( 
g^{(2)}(S^{-1}(u_{(3)} v_{(6)} w_{(3)}), u_{(2)}) v_{(1)} w_{(1)}  
\otimes \big(
S^{-1}(v_{(5)} w_{(4)}) v_{(2)} S\big(f^{(1)}(S^{-1}(v_{(4)} w_{(3)}), v_{(3)})\big)\big) \lact 
\\
&
\qquad
\big( 
f^{(2)}(S^{-1}(v_{(4)} w_{(3)}),  v_{(3)})w_{(2)} \otimes 1 \big)
\Big),
\end{split}
\end{equation*}
\end{footnotesize}
that is, the same expression as above.

Checking (or rather illustrating at this point) the identity \rmref{cycl1} is left to the reader and Eq.~\rmref{cycl3} was already checked in Theorem \ref{backspace}.

Finally, let us see that the operad is not only cyclic but indeed cyclic with multiplication, that is, that $\tau_\diag \circ \mu = \mu$. Indeed,
\begin{equation*}
\begin{split}
(&\tau_\diag \circ \mu)(u,v) 
\\
&= \Big(u_{(1)} S\big(\mu^{(1)}\big(S^{-1}(u_{(3)}v_{(2)}), u_{(2)}\big)\big)\Big) \lact \big(\mu^{(2)}\big(S^{-1}(u_{(3)}v_{(2)}), u_{(2)}\big) v_{(1)}  \otimes 1\big)
\\
&= \Big(u_{(1)} S\big(S^{-1}(u_{(4)}v_{(2)})_{(1)}u_{(2)}\big)\Big) \lact \big((S^{-1}(u_{(4)}v_{(2)})_{(2)}u_{(3)}) v_{(1)}  \otimes 1\big) 
\\
&= \Big(u_{(1)} S\big(S^{-1}(u_{(5)}v_{(3)}) u_{(2)}\big)\Big) \lact \big(S^{-1}(u_{(4)}v_{(2)}) u_{(3)} v_{(1)}  \otimes 1\big) 
\\
&= \big(uv \big) \lact (1  \otimes 1) 
\\
&= \mu(u,v),
\end{split}
\end{equation*}
which is what we wanted.
This concludes the proof.
\end{proof}

Being the structure of a cyclic operad with multiplication given on the diagonal complex that computes Gerstenhaber-Schack cohomology, 
Theorem 1.4 in \cite{Men:BVAACCOHA} then has two important consequences. Firstly, we immediately reproduce Theorem \ref{backspace}:

\begin{cor}
\label{forwardspace}
Together with the cocyclic operator \rmref{duschdas}, the cosimplicial $\K$-module from \rmref{gazette} becomes a cocyclic $\K$-module. 
\end{cor}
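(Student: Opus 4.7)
The plan is to deduce this corollary as a direct consequence of Theorem~\ref{japonvuduciel2} combined with the standard fact that any cyclic operad with multiplication gives rise to a cocyclic $\K$-module. Concretely, I would appeal to the general construction (as recorded, e.g., in \cite{Men:BVAACCOHA} and summarised in Appendix~\ref{pamukkale2}) which, starting from a cyclic operad $\cO$ with multiplication $\mu$, produces cofaces, codegeneracies and a cocyclic operator on the family $\{\cO(n)\}_{n \geq 0}$ via partial compositions with $\mu$ and the operadic cyclic operator. These data automatically satisfy the cocyclic axioms because the operad is cyclic and $\mu$ is compatible with the cyclic operator in the sense $\tau \circ \mu = \mu$.

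First, I would note that Theorem~\ref{japonvuduciel2} supplies exactly such a cyclic operad with multiplication on the family $C_\diag(H)(n) = \Hom_\K(H^{\otimes n}, H^{\otimes n})$ with multiplication element $\mu$ from \rmref{stabiloboss} and cyclic operator $\tau_\diag$ from \rmref{duschdas}. Hence the abstract construction yields a cocyclic $\K$-module structure on $\{C_\diag(H)(n)\}_{n\geq 0}$.

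Next, it remains to identify the cosimplicial data produced by this abstract construction with those of the Gerstenhaber-Schack diagonal complex $(C^\bullet_\diag(H), \gd^\diag, \gs^\diag)$ in \rmref{gazette}--\rmref{trockenfirma}. But this identification was already carried out in the last part of the proof of Theorem~\ref{japonvuduciel1}, where the differential on the diagonal complex was shown to coincide with $(-1)^{p+1}\{\mu,-\}$ and, more implicitly, where the cosimplicial structure was matched with the one induced by the operadic composition \rmref{aglio} and the multiplication element. Thus the cocyclic operator furnished by the abstract construction is precisely $\tau_\diag$.

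There is no real obstacle here, since all the hard work has been done in Theorem~\ref{japonvuduciel2}: that theorem already verifies the three defining compatibilities \rmref{cycl1}--\rmref{cycl3} between $\tau_\diag$ and the operadic composition, together with $\tau_\diag \circ \mu = \mu$, and these are exactly the inputs needed for the general machine to produce a cocyclic (as opposed to merely para-cocyclic) $\K$-module. The corollary is therefore a one-line consequence, and it reproduces Theorem~\ref{backspace} without requiring the separate bi-para-cocyclic/cylindrical argument of Section~\ref{steckdose}.
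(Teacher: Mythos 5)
Your proposal is correct and follows essentially the same route as the paper: Corollary \ref{forwardspace} is stated there as an immediate consequence of Theorem \ref{japonvuduciel2} via Menichi's Theorem 1.4 (the standard fact that a cyclic operad with multiplication yields a cocyclic $\K$-module), with the identification of the induced cosimplicial structure with \rmref{trockenfirma} already settled in Theorem \ref{japonvuduciel1} and the opening of \S\ref{klempnerimmernochda}. Your closing observation that this reproduces Theorem \ref{backspace} without the cylindrical-module argument is exactly the point the paper makes as well.
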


Secondly, we obtain:

\begin{cor}
\label{nikolaus}
For any Hopf algebra over a field with involutive antipode, its Gerstenhaber-Schack cohomology groups form a BV algebra.
\end{cor}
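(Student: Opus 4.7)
The plan is to invoke the general principle that any cyclic operad with multiplication induces a Batalin-Vilkoviski\u\i\ algebra structure on the cohomology of its associated (Hochschild-type) cosimplicial cochain complex. This is the content of \cite[Thm.~1.4]{Men:BVAACCOHA}, already recalled in Appendix \ref{pamukkale2}, and every one of its hypotheses has been checked in the preceding material, so the corollary should essentially reduce to a citation.

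First, I would identify the ingredients. By Theorem \ref{japonvuduciel1}, the family $\{C_\diag(H)(n)\}_{n \geq 0}$ is an operad with multiplication whose associated cosimplicial structure coincides with \rmref{trockenfirma}, hence whose cosimplicial cohomology is $H^\bullet_{GS}(H,H)$. Under the standing hypothesis $S^2 = \id$, Theorem \ref{japonvuduciel2} upgrades this to a \emph{cyclic} operad with multiplication, with cyclic operator $\tau_\diag$ as in \rmref{duschdas} and multiplication element $\mu$ as in \rmref{stabiloboss}. In particular the compatibility $\tau_\diag \circ \mu = \mu$ verified at the end of that proof is precisely the extra ingredient one needs beyond the operad with multiplication structure of Theorem \ref{japonvuduciel1}.

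Second, I would apply Menichi's theorem to this data to conclude. It yields on $H^\bullet_{GS}(H,H)$ a BV algebra structure whose underlying Gerstenhaber algebra coincides with the one of Corollary \ref{fegato} (with the cup product computed explicitly in Lemma \ref{haltnull}), and whose BV operator is the degree $-1$ endomorphism induced by the Connes cyclic coboundary associated with $\tau_\diag$, that is, precisely the operator $B_\diag$ of Remark \ref{cyclicgs}. The BV identity relating $\{-,-\}$, $\smallsmile$ and $B_\diag$ then holds automatically on cohomology, and $B^2_\diag = 0$ is already part of the cocyclic structure.

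The main (and indeed only) obstacle has already been overcome in Theorem \ref{japonvuduciel2}; once the cyclic operad with multiplication structure on the diagonal complex is in place, the BV algebra structure on Gerstenhaber-Schack cohomology follows with no further computation.
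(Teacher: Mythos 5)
Your proposal is correct and follows exactly the paper's own route: the corollary is obtained by applying Menichi's theorem \cite[Thm.~1.4]{Men:BVAACCOHA} to the cyclic operad with multiplication structure established in Theorem \ref{japonvuduciel2}, so that the Gerstenhaber structure of Corollary \ref{fegato} is generated by the Connes coboundary $B_\diag$ associated with $\tau_\diag$. No further argument is needed beyond this citation, which is precisely how the paper concludes.
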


\section{The finite dimensional case}
\label{finite}

In case the Hopf algebra $H$ is finitely generated as a module over $\K$, a 
result by Taillefer \cite[Thm.~3.4]{Tai:IHBCOIDHAAGCOTYP} implies that 
\begin{equation}
\label{indeed}
H^\bullet_{GS}(H,H) \simeq \Ext^\bullet_{{\bf YD}}(\K, \K) \simeq \Ext^\bullet_{D(H)}(\K,\K),
\end{equation}
where ${\bf YD}$ denotes the category of (left-left) Yetter-Drinfel'd modules over $H$ and $D(H)$ denotes the Drinfel'd double of $H$, see, for example, \cite[IX.4]{Kas:QG} for a description. 
As $D(H)$ is a Hopf algebra as well (the antipode of which is involutive if this is the case for $H$), 
the cohomology groups on the left hand side in \rmref{indeed} can be computed by the cochain complex $C^\bullet(D(H), \K)$ from \rmref{earlyinthemorning} with differential obtained from the cosimplicial pieces in \rmref{deepatnight2}. In particular, the underlying $\K$-modules in \rmref{earlyinthemorning} constitute an operad with multiplication (see \cite[Eq.~(5.1)]{Kow:WEIABVA}, which is cyclic if the antipode of $H$ is involutive) but the corresponding Gerstenhaber bracket formed as in Eq.~\rmref{naemlichhier} turns out to be zero when descending to cohomology \cite[Rem.~5.4]{Tai:IHBCOIDHAAGCOTYP}.
On the other hand, the customary Hom-tensor adjunction induces in each degree $n \geq 0$ an isomorphism 
\begin{equation}
\label{lonza}
C^n(D(H), \K) \simeq C^n_\diag(H)
\end{equation}
of $\K$-modules and hence the operadic structure on $C^n(D(H), \K)$ induces one on $C^n_\diag(H)$ as well, which, in general, is different from the one given in \rmref{aglio}. 

On top, the isomorphism \rmref{lonza} is in general {\em not} one of cochain complexes and hence the induced operadic structure on  $ C^n_\diag(H)$ does not produce the correct differential $\gd^\diag$ obtained from \rmref{trockenfirma} to compute $H^\bullet_{GS}(H,H)$ when seen as an operad with multiplication in the sense of \rmref{immaginedellacitta`}. Varying the isomorphism \rmref{lonza} by composing it with maps of the type in \rmref{sesam1} or \rmref{mases0} or combinations thereof does not help.

On the other hand, as both cochain complexes $C^\bullet(D(H), \K)$ and $C^\bullet_\diag(H)$ do compute the same cohomology, they are quasi-isomorphic; moreover, as $H$ is finitely generated as a module over $\K$, (\ref{lonza}) implies they must actually be isomorphic as cochain complexes. Cochain complex isomorphisms between  $C^\bullet(D(H), \K)$ and $C^\bullet_\diag(H)$ are, however, neither unique nor canonical. 
Although there is a priori no reason why among various such isomorphisms there should be also a morphism of operads with multiplication, one nevertheless might be tempted to expect 
that the Gerstenhaber bracket from Corollary \ref{fegato} vanishes in cohomology as well in case $H$ is finitely generated as a $\K$-module. 

We prove this by a direct computation:

\begin{theorem}
\label{schnief}
Let $H$ be a finite dimensional Hopf algebra over a field of characteristic zero. Then the Gerstenhaber bracket from Corollary \ref{fegato} vanishes in $H^\bullet_{GS}(H,H)$.
\end{theorem}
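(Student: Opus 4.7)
The plan is to pivot on Taillefer's isomorphism $H^\bullet_{GS}(H,H) \simeq \Ext^\bullet_{D(H)}(\K,\K)$ recalled in \rmref{indeed} and transport the vanishing of the bracket from the $D(H)$-side back to the diagonal complex. By the discussion preceding the statement, the classical operadic structure on $C^\bullet(D(H),\K)$ (as in \cite{Kow:WEIABVA}) yields a Gerstenhaber bracket which, by \cite[Rem.~5.4]{Tai:IHBCOIDHAAGCOTYP}, vanishes in cohomology. So the substance of the theorem is showing that the bracket induced from the operadic structure on $C^\bullet_\diag(H)$ from Theorem \ref{japonvuduciel1} agrees with the classical one on cohomology, despite the fact that the $\K$-module isomorphism \rmref{lonza} is neither a chain map nor a morphism of operads.

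The first step would be to spell out explicit cochain complex isomorphisms $\Phi^\bullet\colon C^\bullet(D(H),\K) \to C^\bullet_\diag(H)$, built by composing \rmref{lonza} with suitable Hom-tensor adjunctions and the cochain isomorphisms $\xi$ and $\eta$ from \rmref{sesam1} and \rmref{mases1}. Since both sides compute the same cohomology and are isomorphic in each degree (by finite dimensionality of $H$), any such $\Phi^\bullet$ can be modified by a chain homotopy to make it a genuine quasi-isomorphism of cochain complexes. The next step is to show that, after passing to cohomology, $\Phi^\bullet$ intertwines the two Gerstenhaber brackets. The cleanest route is to verify this for representatives of low-degree cocycles: for $f,g$ of bidegree $(p,p)$ and $(q,q)$ respectively, write down $\{f,g\}_\diag$ using \rmref{aglio} and $\{\Phi^{-1}(f),\Phi^{-1}(g)\}_{D(H)}$ using the composition from \cite[Eq.~(5.1)]{Kow:WEIABVA}, and exhibit an explicit primitive witnessing their difference.

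An alternative plan, and the one I would pursue in parallel, is a purely direct computation on $C^\bullet_\diag(H)$ that does not go through $D(H)$ at all. Here the idea is to use the finite dimensionality of $H$ to produce a two-sided integral $\int \in H$ (guaranteed by Larson–Sweedler), and then to write down an explicit cochain $h(f,g) \in C^{p+q-1}_\diag(H)$, built from $f,g$ and $\int$, whose $\gd^\diag$-image is $\{f,g\}$. The finite dimensionality would enter precisely at this stage, mirroring the structural reason alluded to in the introduction (namely, the absence of nonzero primitive elements in a finite dimensional Hopf algebra); once the integral is available, the various $S^{\pm 1}$ and $\Delta$ contortions in the bracket formula should collapse after suitable Hopf algebra manipulations.

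The main obstacle, in either approach, is the combinatorial mismatch between the operadic composition \rmref{aglio}—which encodes the bialgebra structure in an essential way through both $\Delta$ and $S^{-1}$—and the much simpler concatenation-type composition on $C^\bullet(D(H),\K)$. The chain-level bracket from Corollary \ref{fegato} is demonstrably nonzero (cf. Example \ref{pranzo-di-natale}), so the vanishing is genuinely a cohomology statement: one must produce explicit primitives, not merely zero elements. I expect the cleanest proof to arise from the second strategy, reducing the identity $\{f,g\} = \gd^\diag h(f,g)$ on cocycles to a single Hopf-algebraic identity involving the integral, the antipode, and a shuffle-type sum.
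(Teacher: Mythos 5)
Both of your strategies stop short of the point where the actual work lies, and each has a concrete defect. For the transfer strategy: the paper itself flags exactly the obstruction you would need to overcome, namely that the degreewise isomorphism \rmref{lonza} is not a map of cochain complexes, and that even though $C^\bullet(D(H),\K)$ and $C^\bullet_\diag(H)$ are (non-canonically) isomorphic as complexes in the finite dimensional case, there is no reason any such isomorphism should be a morphism of operads with multiplication; hence no reason it should intertwine the two brackets. Your plan to ``verify this for representatives of low-degree cocycles'' and to ``exhibit an explicit primitive witnessing their difference'' is precisely the missing content, not a method for producing it, and the claim that a degreewise isomorphism which fails to be a chain map ``can be modified by a chain homotopy'' into one is not a meaningful operation. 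For the direct strategy: Larson--Sweedler does not provide a \emph{two-sided} integral; a finite dimensional Hopf algebra over a field of characteristic zero need not be unimodular (Taft algebras), so the basic ingredient of your proposed homotopy $h(f,g)$ need not exist, and in any case no formula for $h(f,g)$ and no verification of $\gd^\diag h(f,g)=\{f,g\}$ is offered.

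The paper's proof uses neither route. It is a direct computation on $C^\bullet_\diag(H)$ whose engine is a normalization-and-commutation property of cocycles: for a $\gd^\diag$-cocycle $f$ one first shows the identities \rmref{minestra} (in degree one this reduces to showing $f(1)$ is primitive, whence $f(1)=0$ because a finite dimensional Hopf algebra in characteristic zero has no nonzero primitives --- this is the only place finite dimensionality and characteristic zero enter), and from these one deduces the key identity \rmref{recitadinatale}, i.e.\ $f(u^1_{(1)},\ldots,u^p_{(1)})\otimes u^1_{(2)}\cdots u^p_{(2)} = u^1_{(1)}\cdots u^p_{(1)}\otimes f(u^1_{(2)},\ldots,u^p_{(2)})$. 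Feeding this into the composition \rmref{aglio} collapses the $S^{-1}$- and $\Delta$-contortions and shows that $\{f,g\}$ of two cocycles is either identically zero (as happens already at chain level for two $1$-cocycles) or an explicit coboundary. So the structural fact about primitives that you mention only in passing is the actual mechanism, whereas the integral plays no role; if you want to salvage your proposal you should abandon the $D(H)$ transfer and the integral, and instead prove the cocycle identities \rmref{minestra}--\rmref{recitadinatale} in all degrees and substitute them into \rmref{aglio}.
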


\begin{proof}
This can be proven by a direct but annoying computation, which relies on the following observations: first, if $f \in C^p_\diag(H)$ is a cocycle with respect to the differential $\gd_\diag$, one can verify that 
\begin{equation}
\label{minestra}
\sum_{i=1}^{p} (-1)^i f(u^1, \ldots, u^{i-1}, 1, u^{i+1}, \ldots, u^p) = 0, 
\end{equation}
and also that $f(1, \ldots, 1) = 0$ for $p$ odd and  $f(1, \ldots, 1) = 1^{\otimes p}$ (or $\K$-multiples thereof) for $p$ even. From this follows that
\begin{equation}
\label{recitadinatale}
f(u^1_{(1)}, \ldots, u^p_{(1)}) \otimes u^1_{(2)} \cdots u^p_{(2)} = u^1_{(1)} \cdots u^p_{(1)} \otimes f(u^1_{(2)}, \ldots, u^p_{(2)}),
\end{equation}
and inserting this into the operadic composition \rmref{aglio} one can see (with some patience) that $\{f, g \}$ for two cocycles is either zero or a coboundary, hence vanishes in cohomology. For the sake of (paper) compactness, we will elucidate here how to do this for the lowest degree case of two $1$-cocycles, which nevertheless makes clear which kind of algebraic manipulations are used in order to tackle the general case. To give an idea of how the length of the computation increases while the kind of manipulations occurring in it are basically unchanged, the detailed computation of the bracket of a $1$-cocycle and a $2$-cocycle is given in Appendix \ref{favetti}. 

So, let both $f, g \in C^1_\diag(H)$ be both $1$-cocycles, that is, for $u, v$ in $H$ we have
\begin{equation}
\label{funzionepubblica}
f(u_{(1)}) v_{(1)} \otimes u_{(2)} v_{(2)} - \gD( f(uv)) + u_{(1)} v_{(1)} \otimes u_{(2)} f(v_{(2)}) = 0.
\end{equation}
Putting first $u =1$ and then $v = 1$ in Eq.~\rmref{funzionepubblica} and subtracting the two resulting equations one from another leads to
\begin{equation}
\label{hust}
f(u_{(1)})  \otimes u_{(2)} + u_{(1)} \otimes u_{(2)} f(1) 
-
f(1) u_{(1)} \otimes u_{(2)} - u_{(1)} \otimes f(u_{(2)}) = 0.
\end{equation}
Moreover, putting both $u=v=1$ in Eq.~\rmref{funzionepubblica} leads to $f(1) \otimes 1 - \gD(f(1)) + 1 \otimes f(1) = 0$, that is, $f(1)$ is a primitive element. However, a finite dimensional Hopf algebra over a field of characteristic zero does not have any nontrivial primitive elements \cite[Ex.~4.2.16]{DasNasRai:HAAI}; hence, $f(1) = 0$, which is Eq.~\rmref{minestra} in degree one. Putting this back into \rmref{hust}, we obtain \rmref{recitadinatale} in degree one. From this, one further obtains
\begin{equation}
\begin{split}
\label{quipure}
f(u_{(1)})  \otimes u_{(2)} \otimes u_{(3)} 
&= u_{(1)} \otimes \gD\big(f(u_{(2)})\big) 
\\
&=  u_{(1)} \otimes f(u_{(2)}) \otimes u_{(3)}
\\
&=  u_{(1)} \otimes u_{(2)} \otimes f(u_{(3)}),
\end{split}
\end{equation}
where the last two steps again follow from \rmref{funzionepubblica} evaluated for $u=1$ resp.\ $v=1$, along with $f(1) = 0$.
We then have 
\begin{equation*}
\begin{split}
f(u_{(3)}) S^{-1}(u_{(2)}) g(u_{(1)}) &=m^2_{\mathrm{op}}(g\otimes S^{-1}\otimes \id)(u_{(1)}  \otimes u_{(2)} \otimes f(u_{(3)}))\\
&=m^2_{\mathrm{op}}(g\otimes S^{-1}\otimes \id)(u_{(1)} \otimes f(u_{(2)}) \otimes u_{(3)})\\
&=m^2_{\mathrm{op}}(\id\otimes S^{-1}f\otimes \id)(g(u_{(1)}) \otimes u_{(2)} \otimes u_{(3)})\\
&=m^2_{\mathrm{op}}(\id\otimes S^{-1}f\otimes \id)(u_{(1)} \otimes u_{(2)} \otimes g(u_{(3)}))\\
&=m^2_{\mathrm{op}}(\id\otimes S^{-1}\otimes g)(u_{(1)} \otimes f(u_{(2)}) \otimes u_{(3)})\\
&=m^2_{\mathrm{op}}(\id\otimes S^{-1}\otimes g)(f(u_{(1)}) \otimes u_{(2)} \otimes u_{(3)})\\
&=g(u_{(3)}) S^{-1}(u_{(2)}) f(u_{(1)}).
\end{split}
\end{equation*}
Hence, $\{f,g\} = 0$, which concludes the proof.
\end{proof}

From Theorem 5.7 in \cite{FioKow:HBOCANCC} 
then follows at once:

\begin{cor}
\label{schnief2}
For any finite dimensional Hopf algebra over a field of characteristic zero, its Gerstenhaber-Schack cohomology is an $e_3$-algebra by means of the cup product from \rmref{herbst} and the degree $-2$ bracket
\begin{equation}
\label{fidenza}
\{\!\!\{ f,g\}\!\!\}:=(-1)^f (B f)\smallsmile(B g),
\end{equation}
for $f, g \in H^\bullet_{GS}(H,H)$,
where $B$ is Connes' cyclic coboundary obtained from the cocyclic operator \rmref{duschdas} as in Eq.~\rmref{e-mantra}. 
\end{cor}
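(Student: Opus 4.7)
The plan is essentially to chain together the results already established in the excerpt and apply the general mechanism of Theorem~5.7 of \cite{FioKow:HBOCANCC}. The structure of the argument is: BV algebra $+$ vanishing Gerstenhaber bracket $\Rightarrow$ $e_3$-algebra, where the secondary bracket is measured by the BV operator on the two arguments.

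More precisely, I would first invoke Corollary \ref{nikolaus} to equip $H^\bullet_{GS}(H,H)$ with a BV algebra structure, whose cup product is \rmref{herbst}, whose BV operator $B$ is obtained from the cocyclic operator $\tau_\diag$ via \rmref{e-mantra}, and whose Gerstenhaber bracket $\{-,-\}$ is the one coming from the operad with multiplication of Theorem \ref{B}. In particular, the BV identity
\begin{equation*}
\{f,g\} = -(-1)^{f} B f \smallsmile g - f\smallsmile B g + (-1)^{f} B(f\smallsmile g)
\end{equation*}
holds on $H^\bullet_{GS}(H,H)$. Second, I would use Theorem \ref{schnief}, valid precisely under the finite dimensionality hypothesis and in characteristic zero, to conclude that the Gerstenhaber bracket $\{-,-\}$ vanishes identically on $H^\bullet_{GS}(H,H)$.

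At this point we are exactly in the hypotheses of Theorem~5.7 of \cite{FioKow:HBOCANCC}: a BV algebra whose underlying Gerstenhaber bracket is zero. That theorem asserts that in this situation the formula
\begin{equation*}
\{\!\!\{ f,g\}\!\!\}:=(-1)^f (B f)\smallsmile(B g)
\end{equation*}
defines a graded Lie bracket of degree $-2$ which is a biderivation with respect to the cup product, and that together $(\smallsmile,\{\!\!\{-,-\}\!\!\})$ endow $H^\bullet_{GS}(H,H)$ with the structure of an $e_3$-algebra. Direct application of this result then yields the corollary.

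There is essentially no obstacle left to overcome here: the content of the corollary is almost entirely packaged in the cited Theorem~5.7, and the two remaining ingredients (the BV structure and the vanishing of the primary bracket) are precisely Corollary \ref{nikolaus} and Theorem \ref{schnief} respectively. The only check one could in principle want to perform is that $B$ appearing in \rmref{fidenza} truly coincides with the BV operator of the BV structure of Corollary \ref{nikolaus}, but this is built in by construction, since both are produced from $\tau_\diag$ via the standard formula \rmref{e-mantra}.
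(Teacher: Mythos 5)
Your proposal is correct and is essentially the paper's own proof: the paper likewise deduces the corollary ``at once'' from Theorem~5.7 of \cite{FioKow:HBOCANCC}, with the BV structure of Corollary \ref{nikolaus} and the vanishing of the Gerstenhaber bracket from Theorem \ref{schnief} as the two inputs, and with $B$ the operator built from $\tau_\diag$ via \rmref{e-mantra}. You have merely spelled out the chain of citations that the paper leaves implicit, so there is nothing to add.
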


\begin{rem}
The  $e_3$-algebra structure on $H_{GS}(H,H)$ from Corollary \ref{schnief2} is presumably different from the one exhibited in \cite{GinYal:DTOBHHCAF, Sho:DGCADC, Sho:TOABFATMC} as the latter does not rely on the finite dimensionality of $H$. In particular, the constructions in {\em op.~cit.}~would apply to the enveloping algebra of a nonabelian Lie algebra, whereas Example \ref{pranzo-di-natale} shows that this is not the case for the construction in Corollary \ref{schnief2}.
\end{rem}

%

\appendix

\section{Full proofs of Theorems \ref{japonvuduciel1} and \ref{japonvuduciel2}}
\label{panic}

\begin{proof}[Proof of Theorem \ref{japonvuduciel1}]
It remains to show in full generality that the composition defined in \rmref{aglio} indeed is an operadic one, that is, fulfils the identities in \rmref{danton}.
To prove the vertical composition axiom, that is,
the middle identity in \rmref{danton}, let 
$f \in \Hom_\K(H^{\otimes p}, H^{\otimes p})$,
$g \in  \Hom_\K(H^{\otimes q}, H^{\otimes q})$,
and $h \in  \Hom_\K(H^{\otimes r}, H^{\otimes r})$,
as well as $i \leq j \leq q + i -1$.
We compute
\begin{footnotesize}
\begin{equation*}
\begin{split}
\big(&(f \circ_i g) \circ_j h)(u^1, \ldots, u^{p+q+r-2}) 
\\
&= [(\id^{j-1} \otimes \Delta^{r-1} \otimes \id^{p+q-j-1}) (f \circ_i g)(u^1, \ldots, u^{j-1}, u^j_{(j+2)} \cdots  u^{j+r-1}_{(j+2)}, u^{j+r}_{(j+2)}, \ldots,  u^{p+q+r-2}_{(j+2)})]
\\
& 
\qquad
[S^{-1}(u^{j}_{(j+1)} \cdots u^{p+q+r-2}_{(j+1)}) \lact (u^{j}_{(1)} \cdots u^{p+q+r-2}_{(1)} \otimes \cdots \otimes u^{j}_{(j-1)} \cdots u^{p+q+r-2}_{(j-1)} 
\\
& 
\qquad 
\otimes h(u^{j}_{(j)}, \ldots, u^{j+r-1}_{(j)}) \ract (u^{j+r}_{(j)} \cdots u^{p+q+r-2}_{(j)})) \otimes 1^{\otimes p+q-j-1}]
\\
&=
 \big[\big(\id^{j-1} \otimes \Delta^{r-1} \otimes \id^{p+q-j-1}\big) 
[(\id^{i-1} \otimes \Delta^{q-1} \otimes \id^{p-i}) f(u^1, \ldots, u^{i-1}, 
\\
&
\qquad
u^i_{(i+2)} \cdots   u^{j-1}_{(i+2)} u^j_{(j+i+3)} 
\cdots  u^{i+q+r-2}_{(j+i+3)},  
 u^{i+r+q-1}_{(j+i+4)}, \ldots,  u^{p+q+r-2}_{(j+i+4) })]
\\
& 
\qquad
[S^{-1}(u^i_{(i+1)} \cdots   u^{j-1}_{(i+1)} u^j_{(j+i+2)} \cdots  u^{p+q+r-2}_{(j+i+2)}) \lact 
\\
&
\qquad
(u^i_{(1)} \cdots   u^{j-1}_{(1)} u^j_{(j+2)} 
\cdots u^{p+q+r-2}_{(j+2)}
\otimes \cdots 
\otimes 
u^i_{(i-1)} \cdots   u^{j-1}_{(i-1)} u^j_{(j+i)} 
\cdots u^{p+q+r-2}_{(j+i)}
\\
& 
\qquad 
\otimes g(u^i_{(i)}, \ldots,   u^{j-1}_{(i)}, u^j_{(j+i+1)} \cdots u^{j+r-1}_{(j+i+1)}, 
u^{j+r}_{(j+i+1)},  \ldots, u^{i+q+r-2}_{(j+i+1)}) 
\\
& 
\qquad
\ract (u^{i+r+q-1}_{(j+i+3)} \cdots  u^{p+q+r-2}_{(j+i+3)})) \otimes 1^{\otimes p-i}]
\big]
\big[S^{-1}(u^{j}_{(j+1)} \cdots u^{p+q+r-2}_{(j+1)}) \lact 
\\
&
\qquad
(u^{j}_{(1)} \cdots u^{p+q+r-2}_{(1)} \otimes \cdots \otimes u^{j}_{(j-1)} \cdots u^{p+q+r-2}_{(j-1)}  
\otimes h(u^{j}_{(j)}, \ldots, u^{j+r-1}_{(j)}) 
\\
&
\qquad
\ract (u^{j+r}_{(j)} \cdots u^{p+q+r-2}_{(j)})) \otimes 1^{\otimes p+q-j-1}\big]
\\
&=
\big[\big(\id^{i-1} \otimes \Delta^{q+r-2} \otimes \id^{p-i}\big)  
\\
& 
\qquad
f(u^1, \ldots, u^{i-1}, u^i_{(i+2)} \cdots  
 u^{j-1}_{(i+2)} u^j_{(j+4)} 
\cdots u^{i+q+r-2}_{(j+4)},  
 u^{i+r+q-1}_{(j+4)}, \ldots,  u^{p+q+r-2}_{(j+4)}) \big]
\\
&
\qquad
\big[S^{-1}(u^i_{(i+1)} \cdots   u^{j-1}_{(i+1)} u^j_{(j+3)} \cdots u^{p+q+r-2}_{(j+3)}) \lact (
u^i_{(1)} \cdots   u^{j-1}_{(1)} 
\otimes \cdots 
\otimes 
u^i_{(i-1)} \cdots   u^{j-1}_{(i-1)} 
\\
& 
\qquad 
\otimes (\id^{j-i} \otimes \Delta^{r-1} \otimes \id^{q+i-j-1})(g(u^i_{(i)}, \ldots,   u^{j-1}_{(i)}, u^j_{(j+2)} \cdots u^{j+r-1}_{(j+2)}, 
u^{j+r}_{(j+2)},  \ldots, u^{i+q+r-2}_{(j+2)})) 
\\
&
\qquad
\ract (u^{i+r+q-1}_{(j+2)} \cdots  u^{p+q+r-2}_{(j+2)}))  \otimes 1^{\otimes p-i} \big] 
\big[u^{j}_{(1)} \cdots u^{p+q+r-2}_{(1)} \otimes \cdots \otimes 
u^{j}_{(i-1)} \cdots u^{p+q+r-2}_{(i-1)}
\\
&
\qquad
\otimes 
S^{-1}(u^{j}_{(j+1)} \cdots u^{p+q+r-2}_{(j+1)}) \lact
(u^{j}_{(i)} \cdots u^{p+q+r-2}_{(i)} \otimes \cdots \otimes 
u^{j}_{(j-1)} \cdots u^{p+q+r-2}_{(j-1)} 
\\
& 
\qquad 
\otimes h(u^{j}_{(j)}, \ldots, u^{j+r-1}_{(j)}) \ract (u^{j+r}_{(j)} \cdots u^{p+q+r-2}_{(j)})) \otimes 1^{\otimes p+q-j-1}\big]
\\
&=
\big[\big(\id^{i-1} \otimes \Delta^{q+r-2} \otimes \id^{p-i}\big)  
\\
& 
\qquad
f(u^1, \ldots, u^{i-1}, u^i_{(i+2)} \cdots  
 u^{j-1}_{(i+2)} u^j_{(j+4)} 
\cdots u^{i+q+r-2}_{(j+4)},  
 u^{i+r+q-1}_{(j+5)}, \ldots,  u^{p+q+r-2}_{(j+5)}) \big]
\\
&
\qquad
\big[S^{-1}(u^i_{(i+1)} \cdots   u^{j-1}_{(i+1)} u^j_{(j+3)} \cdots  u^{i+r+q-2}_{(j+3)}  u^{i+r+q-1}_{(j+4)} \cdots u^{p+q+r-2}_{(j+4)}) \lact 
\\
& 
\qquad
\Big(u^{i}_{(1)} \cdots u^{p+q+r-2}_{(1)} \otimes \cdots \otimes 
u^{i}_{(i-1)} \cdots u^{p+q+r-2}_{(i-1)}
\\
&
\qquad
\otimes 
[(\id^{j-i} \otimes \Delta^{r-1} \otimes \id^{q+i-j-1}) g(u^i_{(i)}, \ldots,   u^{j-1}_{(i)}, u^j_{(j+2)} \cdots u^{j+r-1}_{(j+2)}, 
u^{j+r}_{(j+2)},  \ldots, u^{i+q+r-2}_{(j+2)}) 
\\
&
\qquad
\ract (u^{i+r+q-1}_{(j+2)} \cdots  u^{p+q+r-2}_{(j+2)} S^{-1}(u^{j}_{(j+1)} \cdots u^{p+q+r-2}_{(j+1)}))]
\\
&
\qquad
[u^{j}_{(i)} \cdots u^{p+q+r-2}_{(i)} \otimes \cdots \otimes 
u^{j}_{(j-1)} \cdots u^{p+q+r-2}_{(j-1)} 
\otimes h(u^{j}_{(j)}, \ldots, u^{j+r-1}_{(j)}) 
\\
&
\qquad
\ract (u^{j+r}_{(j)} \cdots u^{p+q+r-2}_{(j)}) \otimes (1^{\otimes q-j+i-1} \ract (u^{i+r+q-1}_{(j+3)} \cdots  u^{p+q+r-2}_{(j+3)}))]\Big) \otimes 1^{\otimes p-i}\big]
\\
&=
\big[\big(\id^{i-1} \otimes \Delta^{q+r-2} \otimes \id^{p-i}\big)  
\\
&
\qquad
f(u^1, \ldots, u^{i-1}, u^i_{(i+2)} \cdots  
 u^{j-1}_{(i+2)} u^j_{(j+4)} 
\cdots u^{i+q+r-2}_{(j+4)},  
 u^{i+r+q-1}_{(j+3)}, \ldots,  u^{p+q+r-2}_{(j+3)}) \big]
\\
&
\qquad
\big[S^{-1}(u^i_{(i+1)} \cdots   u^{j-1}_{(i+1)} u^j_{(j+3)} \cdots  u^{i+r+q-2}_{(j+3)}  u^{i+r+q-1}_{(j+2)} \cdots u^{p+q+r-2}_{(j+2)}) \lact 
\\
&
\qquad
\Big(u^{i}_{(1)} \cdots u^{p+q+r-2}_{(1)} \otimes \cdots \otimes 
u^{i}_{(i-1)} \cdots u^{p+q+r-2}_{(i-1)}
\\
&
\qquad
\otimes 
[(\id^{j-i} \otimes \Delta^{r-1} \otimes \id^{q+i-j-1}) g(u^i_{(i)}, \ldots,   u^{j-1}_{(i)}, u^j_{(j+2)} \cdots u^{j+r-1}_{(j+2)}, 
u^{j+r}_{(j+2)},  \ldots, u^{i+q+r-2}_{(j+2)}) 
\\
&
\qquad
\ract S^{-1}(u^{j}_{(j+1)} \cdots u^{i+q+r-2}_{(j+1)})]
[u^{j}_{(i)} \cdots u^{p+q+r-2}_{(i)} \otimes \cdots \otimes 
u^{j}_{(j-1)} \cdots u^{p+q+r-2}_{(j-1)} 
\\
& 
\qquad 
\otimes h(u^{j}_{(j)}, \ldots, u^{j+r-1}_{(j)}) \ract (u^{j+r}_{(j)} \cdots u^{p+q+r-2}_{(j)}) 
\\
&
\qquad
\otimes (1^{\otimes q-j+i-1} \ract (u^{i+r+q-1}_{(j+1)} \cdots  u^{p+q+r-2}_{(j+1)}))]\Big) \otimes 1^{\otimes p-i}\big],
\end{split}
\end{equation*}
\end{footnotesize}
where in case $i = j$ we read this by means of $u_i \cdots u_{j-1} =1$ and $\{u_i, \ldots, u_{j-1}\} = \emptyset$. On the other hand, one has

\begin{footnotesize}
\begin{equation*}
\begin{split}
\big(&f \circ_i (g \circ_{j-i+1} h))(u^1, \ldots, u^{p+q+r-2})
\\
&= [(\id^{i-1} \otimes \Delta^{q+r-2} \otimes \id^{p-i}) f(u^1, \ldots, u^{i-1}, u^i_{(i+2)} \cdots  u^{i+q+r-2}_{(i+2)}, u^{i+q+r-1}_{(i+2)}, \ldots,  u^{p+q+r-2}_{(i+2)})]
\\
& 
\qquad
[S^{-1}(u^{i}_{(i+1)} \cdots u^{p+q+r-2}_{(i+1)}) \lact (u^{i}_{(1)} \cdots u^{p+q+r-2}_{(1)} \otimes \cdots \otimes u^{i}_{(i-1)} \cdots u^{p+q+r-2}_{(i-1)} 
\\
& 
\qquad 
\otimes (g \circ_{j-i+1}h)(u^{i}_{(i)}, \ldots, u^{i+q+r-2}_{(i)}) \ract (u^{i+q+r-1}_{(i)} \cdots u^{p+q+r-2}_{(i)})) \otimes 1^{\otimes p-i}]
\\
&=
\big[(\id^{i-1} \otimes \Delta^{q+r-2} \otimes \id^{p-i}) 
\\
&
\qquad
f(u^1, \ldots, u^{i-1}, u^i_{(i+2)} \cdots   u^{j-1}_{(i+2)} u^j_{(j+4)} 
\cdots u^{i+q+r-2}_{(j+4)}, u^{i+q+r-1}_{(i+2)}, \ldots,  u^{p+q+r-2}_{(i+2)})\big]
\\
& 
\qquad
\big[S^{-1}(u^{i}_{(i+1)} \cdots  u^{j-1}_{(i+1)} u^j_{(j+3)} 
\cdots u^{i+q+r-2}_{(j+3)} u^{i+q+r-1}_{(i+1)} \cdots  u^{p+q+r-2}_{(i+1)}) \lact 
\\
&
\qquad
\Big(u^{i}_{(1)} \cdots u^{p+q+r-2}_{(1)} \otimes \cdots \otimes u^{i}_{(i-1)} \cdots u^{p+q+r-2}_{(i-1)} 
\\
& 
\qquad 
\otimes 
%
%
%
[(\id^{j-i} \otimes \Delta^{r-1} \otimes \id^{q+i-j-1}) g(u^i_{(i)}, \ldots, 
u^{j-1}_{(i)}, u^j_{(j+2)} \cdots u^{j+r-1}_{(j+2)}, u^{j+r}_{(j+2)}, \ldots,  u^{i+q+r-2}_{(j+2)})]
\\
& 
\qquad
[S^{-1}(u^j_{(j+1)} \cdots u^{i+q+r-2}_{(j+1)}) \lact (u^j_{(i)} \cdots u^{i+q+r-2}_{(i)} \otimes \cdots \otimes u^j_{(j-1)} \cdots u^{i+q+r-2}_{(j-1)} 
\\
& 
\qquad 
\otimes h(u^j_{(j)}, \cdots u^{j+r-1}_{(j)}) 
\ract ( u^{j+r}_{(j)} \cdots  u^{i+q+r-2}_{(j)})) \otimes 1^{\otimes q-j+i-1}]
\\
&
\qquad
 \ract 
 (u^{i+q+r-1}_{(i)} \cdots u^{p+q+r-2}_{(i)} )
\Big) \otimes 1^{\otimes p-i}\big]
\\
&=
\big[\big(\id^{i-1} \otimes \Delta^{q+r-2} \otimes \id^{p-i}\big)  
\\
&
\qquad
f(u^1, \ldots, u^{i-1}, u^i_{(i+2)} \cdots  
 u^{j-1}_{(i+2)} u^j_{(j+4)} 
\cdots u^{i+q+r-2}_{(j+4)},  
 u^{i+r+q-1}_{(j+3)}, \ldots,  u^{p+q+r-2}_{(j+3)}) \big]
\\
&
\qquad
\big[S^{-1}(u^i_{(i+1)} \cdots   u^{j-1}_{(i+1)} u^j_{(j+3)} \cdots  u^{i+r+q-2}_{(j+3)}  u^{i+r+q-1}_{(j+2)} \cdots u^{p+q+r-2}_{(j+2)}) \lact 
\\
&
\qquad
\Big(u^{i}_{(1)} \cdots u^{p+q+r-2}_{(1)} \otimes \cdots \otimes 
u^{i}_{(i-1)} \cdots u^{p+q+r-2}_{(i-1)}
\\
&
\qquad
\otimes 
[(\id^{j-i} \otimes \Delta^{r-1} \otimes \id^{q+i-j-1}) g(u^i_{(i)}, \ldots,   u^{j-1}_{(i)}, u^j_{(j+2)} \cdots u^{j+r-1}_{(j+2)}, 
u^{j+r}_{(j+2)},  \ldots, u^{i+q+r-2}_{(j+2)}) 
\\
&
\qquad
\ract S^{-1}(u^{j}_{(j+1)} \cdots u^{i+q+r-2}_{(j+1)})]
[u^{j}_{(i)} \cdots u^{p+q+r-2}_{(i)} \otimes \cdots \otimes 
u^{j}_{(j-1)} \cdots u^{p+q+r-2}_{(j-1)} 
\\
& 
\qquad 
\otimes h(u^{j}_{(j)}, \ldots, u^{j+r-1}_{(j)}) \ract (u^{j+r}_{(j)} \cdots u^{p+q+r-2}_{(j)}) 
\\
&
\qquad
\otimes (1^{\otimes q-j+i-1} \ract (u^{i+r+q-1}_{(j+1)} \cdots  u^{p+q+r-2}_{(j+1)}))]\Big) \otimes 1^{\otimes p-i}\big],
\end{split}
\end{equation*}
\end{footnotesize}
and by staring on this for a (probably not so) little while, one sees that this indeed coincides with the expression obtained above.

The remaining identities in \rmref{danton} that amount to the parallel composition axiom for an operad
are left to the reader. Hence, the family of $\K$-modules 
$C_\diag(H)(n) := \Hom_\K(H^{\otimes n}, H^{\otimes n})$ for $n \geq 0$
defines an operad in $\kmod$.
\end{proof}

\begin{proof}[Proof of Theorem \ref{japonvuduciel2}]
It remains to prove that Eqs.~\rmref{cycl1}--\rmref{cycl3} are fulfilled with respect to the operadic composition \rmref{aglio} and the cocyclic operator \rmref{duschdas}.
To start with,
let us check \rmref{cycl2}.
Note first that for $f \in C_\diag(H)(p)$ and $g \in C_\diag(H)(q)$, one has
\begin{footnotesize}
\begin{equation*}
\begin{split}
&S\big((f \circ_1 g)^{(1)}(u^1, \ldots, u^{p+q-1})\big) \lact \big( 
(f \circ_1 g)^{(2)}(u^1, \ldots, u^{p+q-1}) \otimes \cdots \otimes (f \circ_1 g)^{(p+q-1)}(u^1, \ldots, u^{p+q-1})\big) 
\\
&=
S\big(g^{(1)}(u^1_{(1)}, \ldots, u^{q}_{(1)})  u^{q+1}_{(1)} \cdots  u^{p+q-1}_{(1)} \big) \lact \Big( 
g^{(2)}(u^1_{(1)}, \ldots, u^{q}_{(1)}) u^{q+1}_{(2)} \cdots  u^{p+q-1}_{(2)}  \otimes \cdots
\\
&
\qquad
\otimes g^{(q)}(u^1_{(1)}, \ldots, u^{q}_{(1)}) u^{q+1}_{(q)} \cdots  u^{p+q-1}_{(q)}
\\
&
\qquad
\otimes \big(u^{1}_{(2)} \cdots u^{q}_{(2)}u^{q+1}_{(q+1)} \cdots  u^{p+q-1}_{(q+1)} S\big(f^{(1)}(u^1_{(3)} \cdots u^q_{(3)}, u^{q+1}_{(q+2)}, \ldots, u^{p+q-1}_{(q+2)})\big)\big) \lact 
\\
&
\qquad
\big( 
f^{(2)}(u^1_{(3)} \cdots u^q_{(3)}, u^{q+1}_{(q+2)}, \ldots, u^{p+q-1}_{(q+2)}) \otimes \cdots 
\otimes
f^{(p)}(u^1_{(3)} \cdots u^q_{(3)}, u^{q+1}_{(q+2)}, \ldots, u^{p+q-1}_{(q+2)})\big)\Big).
\end{split}
\end{equation*}
\end{footnotesize}

Using this, one has
\begin{footnotesize}
\begin{equation*}
\begin{split}
(& \tau_\diag (f \circ_1 g))(u^1, \ldots, u^{p+q-1}) 
\\
&= 
 \Big(u^1_{(1)} \cdots u^{p+q-2}_{(1)} S\big((f \circ_1 g)^{(1)}\big(S^{-1}(u^1_{(3)} \cdots  u^{p+q-2}_{(3)} u^{p+q-1}_{(p+q-1)}), u^1_{(2)}, \ldots, u^{p+q-2}_{(2)}\big)\big)\Big) \lact 
\\
&
\qquad
 \big((f \circ_1 g)^{(2)}\big(S^{-1}(u^1_{(3)} \cdots u^{p+q-2}_{(3)} u^{p+q-1}_{(p+q-1)}), u^1_{(2)}, \ldots, u^{p+q-2}_{(2)} \big) u^{p+q-1}_{(1)}  \otimes \cdots 
\\
&
\qquad
\otimes (f \circ_1 g)^{(p+q-1)}\big(S^{-1}(u^1_{(3)} \cdots u^{p+q-2}_{(3)} u^{p+q-1}_{(p+q-1)}), u^1_{(2)}, \ldots, u^{p+q-2}_{(2)}  \big) u^{p+q-1}_{(p+q-2)} \otimes 1\big) 
\\
& 
=
\Big(u^1_{(1)} \cdots u^{p+q-2}_{(1)}S(u^q_{(2)} \cdots u^{p+q-2}_{(2)})  
S\big(g^{(1)}(S^{-1}(u^1_{(5)} \cdots u^{q-1}_{(5)}  u^{q}_{(q+4)}  \cdots u^{p+q-2}_{(q+4)} u^{p+q-1}_{(p+q-1)})_{(1)}, u^1_{(2)}, \ldots, u^{q-1}_{(2)})\big)\Big) \lact 
\\
&
\qquad
\Big( 
g^{(2)}(S^{-1}(u^1_{(5)} \cdots u^{q-1}_{(5)}  u^{q}_{(q+4)}  \cdots u^{p+q-2}_{(q+4)}u^{p+q-1}_{(p+q-1)})_{(1)}, u^1_{(2)}, \ldots, u^{q-1}_{(2)}) u^{q}_{(3)} \cdots  u^{p+q-2}_{(3)} u^{p+q-1}_{(1)}  \otimes \cdots 
\\
&
\qquad
\otimes g^{(q)}(S^{-1}(u^1_{(5)} \cdots u^{q-1}_{(5)}  u^{q}_{(q+4)}  \cdots u^{p+q-2}_{(q+4)}u^{p+q-1}_{(p+q-1)})_{(1)}, u^1_{(2)}, \ldots, u^{q-1}_{(2)}) u^{q}_{(q+1)} \cdots  u^{p+q-2}_{(q+1)} u^{p+q-1}_{(q-1)} 
\\
&
\qquad
\otimes \big(
S^{-1}(u^1_{(5)} \cdots u^{q-1}_{(5)}  u^{q}_{(q+4)}  \cdots u^{p+q-2}_{(q+4)}u^{p+q-1}_{(p+q-1)})_{(2)} u^1_{(3)} \cdots u^{q-1}_{(3)}  u^q_{(q+2)} \cdots u^{p+q-2}_{(q+2)}
\\
&
\qquad
S\big(f^{(1)}(S^{-1}(u^1_{(5)} \cdots u^{q-1}_{(5)}  u^{q}_{(q+4)}  \cdots u^{p+q-2}_{(q+4)}u^{p+q-1}_{(p+q-1)})_{(3)} u^1_{(4)} \cdots u^{q-1}_{(4)}, u^q_{(q+3)}, \ldots, u^{p+q-2}_{(q+3)})\big)\big) \lact 
\\
&
\qquad
\big( 
f^{(2)}(S^{-1}(u^1_{(5)} \cdots u^{q-1}_{(5)}  u^{q}_{(q+4)}  \cdots u^{p+q-2}_{(q+4)}u^{p+q-1}_{(p+q-1)})_{(3)}u^1_{(4)} \cdots u^{q-1}_{(4)},  u^q_{(q+3)}, \ldots, u^{p+q-2}_{(q+3)} )u^{p+q-1}_{(q)}  \otimes \cdots 
\\
&
\qquad
\otimes f^{(p)}(S^{-1}(u^1_{(5)} \cdots u^{q-1}_{(5)}  u^{q}_{(q+4)}  \cdots u^{p+q-2}_{(q+4)}u^{p+q-1}_{(p+q-1)})_{(3)}u^1_{(4)} \cdots u^{q-1}_{(4)},  u^q_{(q+3)}, \ldots, u^{p+q-2}_{(q+3)} )u^{p+q-1}_{(p+q-2)} \otimes 1 \big)
\Big)
\\
&=
 \Big(u^1_{(1)} \cdots u^{q-1}_{(1)}  
S\big(g^{(1)}(S^{-1}(u^1_{(3)} \cdots u^{q-1}_{(3)}  u^{q}_{(q+4)}  \cdots u^{p+q-2}_{(q+4)}u^{p+q-1}_{(p+q+1)}), u^1_{(2)}, \ldots, u^{q-1}_{(2)})\big)\Big) \lact 
\\
&
\qquad
\Big( 
g^{(2)}(S^{-1}(u^1_{(3)} \cdots u^{q-1}_{(3)}  u^{q}_{(q+4)}  \cdots u^{p+q-2}_{(q+4)}u^{p+q-1}_{(p+q+1)}), u^1_{(2)}, \ldots, u^{q-1}_{(2)}) u^{q}_{(1)} \cdots  u^{p+q-1}_{(1)}  \otimes \cdots 
\\
&
\qquad
\otimes g^{(q)}(S^{-1}(u^1_{(3)} \cdots u^{q-1}_{(3)}  u^{q}_{(q+4)}  \cdots u^{p+q-2}_{(q+4)}u^{p+q-1}_{(p+q+1)}), u^1_{(2)}, \ldots, u^{q-1}_{(2)}) u^{q}_{(q-1)} \cdots  u^{p+q-1}_{(q-1)} 
\\
&
\qquad
\otimes \big(
S^{-1}(u^{q}_{(q+3)}  \cdots u^{p+q-2}_{(q+3)}u^{p+q-1}_{(p+q)}) u^q_{(q)} \cdots u^{p+q-2}_{(q)}
\\
&
\qquad
S\big(f^{(1)}(S^{-1}(u^{q}_{(q+2)}  \cdots u^{p+q-2}_{(q+2)}u^{p+q-1}_{(p+q-1)}), u^q_{(q+1)}, \ldots, u^{p+q-2}_{(q+1)})\big)\big) \lact 
\\
&
\qquad
\big( 
f^{(2)}(S^{-1}(u^{q}_{(q+2)}  \cdots u^{p+q-2}_{(q+2)}u^{p+q-1}_{(p+q-1)}),  u^q_{(q+1)}, \ldots, u^{p+q-2}_{(p+q-1)} )u^{p+q-1}_{(q)}  \otimes \cdots 
\\
&
\qquad
\otimes f^{(p)}(S^{-1}(u^{q}_{(q+2)}  \cdots u^{p+q-2}_{(q+2)}u^{p+q-1}_{(p+q-1)}),  u^q_{(q+1)}, \ldots, u^{p+q-2}_{(p+q-1)} )u^{p+q-1}_{(p+q-2)} \otimes 1 \big)
\Big).
\end{split}
\end{equation*}
\end{footnotesize}
On the other hand, one equally tediously computes
\begin{footnotesize}
\begin{equation*}
\begin{split}
(& \tau_\diag g \circ_q \tau_\diag f)(u^1, \ldots, u^{p+q-1}) 
\\
&= [(\id^{q-1} \otimes \Delta^{p-1}) \tau_\diag g(u^1, \ldots, u^{q-1}, u^q_{(q+2)} \cdots  u^{p+q-1}_{(q+2)})]
\\
& 
\qquad
[S^{-1}(u^{q}_{(q+1)} \cdots u^{p+q-1}_{(q+1)}) \lact (u^{q}_{(1)} \cdots u^{p+q-1}_{(1)} \otimes \cdots \otimes u^{q}_{(q-1)} \cdots u^{p+q-1}_{(q-1)} 
\\
& \qquad \otimes \tau_\diag f(u^{q}_{(q)}, \ldots, u^{p+q-1}_{(q)}))]
\\
&= [(\id^{q-1} \otimes \Delta^{p-1}) 
\Big(
 \Big(u^1_{(1)} \cdots u^{q-1}_{(1)} S\big(g^{(1)}\big(S^{-1}(u^1_{(3)} \cdots  u^{q-1}_{(3)}  u^{q}_{(2q+3)} \cdots u^{p+q-2}_{(2q+3)} u^{p+q-1}_{(p+2q)}), u^1_{(2)}, \ldots, u^{q-1}_{(2)}\big)\big)\Big) \lact 
\\
&
\qquad
 \big(g^{(2)}\big(S^{-1}(u^1_{(3)} \cdots  u^{q-1}_{(3)}  u^{q}_{(2q+3)} \cdots u^{p+q-2}_{(2q+3)} u^{p+q-1}_{(p+2q)}), u^1_{(2)}, \ldots, u^{q-1}_{(2)} \big)   u^{q}_{(q+4)} \cdots u^{p+q-2}_{(q+4)} u^{p+q-1}_{(p+q+1)} \otimes \cdots 
\\
&
\qquad 
\otimes (g^{(q)}\big( S^{-1}(u^1_{(3)} \cdots  u^{q-1}_{(3)} u^{q}_{(2q+3)} \cdots u^{p+q-2}_{(2q+3)} u^{p+q-1}_{(p+2q)}), u^1_{(2)}, \ldots, u^{q-1}_{(2)} \big)   u^{q}_{(2q+2)} \cdots u^{p+q-2}_{(2q+2)} u^{p+q-1}_{(p+2q-1)} \otimes 1\big) 
\Big)
]
\\
& 
\qquad
[S^{-1}(u^{q}_{(q+3)} \cdots u^{p+q-2}_{(q+3)} u^{p+q-1}_{(p+q)}) \lact \Big(u^{q}_{(1)} \cdots u^{p+q-1}_{(1)} \otimes \cdots \otimes u^{q}_{(q-1)} \cdots u^{p+q-1}_{(q-1)} 
\\
& \qquad \otimes 
\Big(u^q_{(q)} \cdots u^{p+q-2}_{(q)} S\big(f^{(1)}\big(S^{-1}(u^q_{(q+2)} \cdots  u^{p+q-2}_{(q+2)} u^{p+q-1}_{(p+q-1)}), u^q_{(q+1)}, \ldots, u^{p+q-2}_{(q+1)}\big)\big)\Big) \lact 
\\
&
\qquad
 \big(f^{(2)}\big(S^{-1}(u^q_{(q+2)} \cdots  u^{p+q-2}_{(q+2)} u^{p+q-1}_{(p+q-1)}), u^q_{(q+1)}, \ldots, u^{p+q-2}_{(q+1)}\big) u^{p+q-1}_{(q)}  \otimes \cdots 
\\
&
\qquad 
\otimes f^{(p)}\big(S^{-1}(u^q_{(q+2)} \cdots  u^{p+q-2}_{(q+2)} u^{p+q-1}_{(p+q-1)}), u^q_{(q+1)}, \ldots, u^{p+q-2}_{(q+1)}  \big) u^{p+q-1}_{(p+q-2)} \otimes 1\big)\Big) 
]
\\
&=
 \Big(u^1_{(1)} \cdots u^{q-1}_{(1)}  
S\big(g^{(1)}(S^{-1}(u^1_{(3)} \cdots u^{q-1}_{(3)}  u^{q}_{(q+4)}  \cdots u^{p+q-2}_{(q+4)}u^{p+q-1}_{(p+q+1)}), u^1_{(2)}, \ldots, u^{q-1}_{(2)})\big)\Big) \lact 
\\
&
\qquad
\Big( 
g^{(2)}(S^{-1}(u^1_{(3)} \cdots u^{q-1}_{(3)}  u^{q}_{(q+4)}  \cdots u^{p+q-2}_{(q+4)}u^{p+q-1}_{(p+q+1)}), u^1_{(2)}, \ldots, u^{q-1}_{(2)}) u^{q}_{(1)} \cdots  u^{p+q-1}_{(1)}  \otimes \cdots 
\\
&
\qquad
\otimes g^{(q)}(S^{-1}(u^1_{(3)} \cdots u^{q-1}_{(3)}  u^{q}_{(q+4)}  \cdots u^{p+q-2}_{(q+4)}u^{p+q-1}_{(p+q+1)}), u^1_{(2)}, \ldots, u^{q-1}_{(2)}) u^{q}_{(q-1)} \cdots  u^{p+q-1}_{(q-1)} 
\\
&
\qquad
\otimes \big(
S^{-1}(u^{q}_{(q+3)}  \cdots u^{p+q-2}_{(q+3)}u^{p+q-1}_{(p+q)}) u^q_{(q)} \cdots u^{p+q-2}_{(q)}
\\
&
\qquad
S\big(f^{(1)}(S^{-1}(u^{q}_{(q+2)}  \cdots u^{p+q-2}_{(q+2)}u^{p+q-1}_{(p+q-1)}), u^q_{(q+1)}, \ldots, u^{p+q-2}_{(q+1)})\big)\big) \lact 
\\
&
\qquad
\big( 
f^{(2)}(S^{-1}(u^{q}_{(q+2)}  \cdots u^{p+q-2}_{(q+2)}u^{p+q-1}_{(p+q-1)}),  u^q_{(q+1)}, \ldots, u^{p+q-2}_{(p+q-1)} )u^{p+q-1}_{(q)}  \otimes \cdots 
\\
&
\qquad
\otimes f^{(p)}(S^{-1}(u^{q}_{(q+2)}  \cdots u^{p+q-2}_{(q+2)}u^{p+q-1}_{(p+q-1)}),  u^q_{(q+1)}, \ldots, u^{p+q-2}_{(p+q-1)} )u^{p+q-1}_{(p+q-2)} \otimes 1 \big)
\Big),
\end{split}
\end{equation*}
\end{footnotesize}
which by half an hour staring on it hopefully without getting mad one can realise that this is the same expression as above.

Checking the identity \rmref{cycl1} is left to the reader and Eq.~\rmref{cycl3} was already checked in Theorem \ref{backspace}. This concludes the proof.
\end{proof}

\section{Addendum to the proof of Theorem \ref{schnief}}
\label{favetti}

In this appendix, we illustrate how the computational complexity increases if one passes to the next simple situation: we show that $\{f,g\} = 0$ in Gerstenhaber-Schack cohomology for a finite dimensional Hopf algebra over a field of characteristic zero, where $f \in C^1_\diag(H)$ is a $1$-cocycle and 
$g \in C^2_\diag(H)$ a $2$-cocycle.
The cocycle condition in degree two explicitly reads:
\begin{equation}
\begin{split}
\label{haydn}
& g(u_{(1)}, v_{(1)}) \ract w_{(1)}  \otimes u_{(2)} v_{(2)}w_{(2)} - (\gD \otimes \id) g(uv, w) 
\\
&
\qquad
+ (\id \otimes \gD)g(u,vw)
- u_{(1)} v_{(1)} w_{(1)}\otimes u_{(2)} \lact g(v_{(2)}, w_{(2)}) = 0,
\end{split}
\end{equation}
for $u,v,w \in H$.
Putting $u = v = w = 1$ in \rmref{haydn} and applying either $(\id \otimes \gve \otimes \gve)$ or  $(\gve \otimes \gve \otimes \id)$ to it yields the equations
$
g^{(1)}(1,1) \gve(g^{(2)}(1,1)) = \gve(g^{(1)}(1,1)) \gve(g^{(2)}(1,1)) = \gve(g^{(1)}(1,1)) g^{(2)}(1,1),
$
hence $g(1,1) = 1 \otimes 1$ (or a $\K$-multiple thereof). Putting $v = w = 1$ in \rmref{haydn} and applying $(\id \otimes \gve \otimes \gve)$ to it gives
$
g^{(1)}(u,1) \gve(g^{(2)}(u,1)) = u \big((\gve \otimes \gve) g(1,1)\big), 
$
whereas with $u = v = 1$, one obtains 
$
\gve(g^{(1)}(1,u))  g^{(2)}(1, u) 
= u ((\gve \otimes \gve) g(1,1)).
$
Setting again $v = w = 1$ resp.\ $u = v = 1$ in \rmref{haydn}, applying $(\id \otimes \id \otimes \gve)$ to it, and using the identities just obtained, yields 
$$
g(u,1) = ((\gve \otimes \gve) g(1,1)) \gD u = g(1,u), 
$$
which is Eq.~\rmref{minestra} in degree two. With similar manipulations one can even show that 
$u_{(1)} \otimes g(u_{(2)},1) = [\gD u][g(1,1)]  = [g(1,1)][\gD u]
= g(1,u_{(1)}) \otimes u_{(2)}$ and from this 
\begin{equation}
\label{trifonov}
(\Delta \otimes \id)g(u,v) = (\id \otimes \Delta)g(u,v).
\end{equation}
Take Eq.~\rmref{haydn} again and subtract one from another the three equations 
resulting from setting $u$ resp.\ $v$ resp.\ $w$ to be equal to $1$.
By inserting all of the above into the outcome of this subtraction, one {\em finally} ends up with
\begin{equation}
\label{scatola1}
g(u_{(1)}, v_{(1)}) \otimes u_{(2)} v_{(2)} = u_{(1)}v_{(1)} \otimes g(u_{(2)}, v_{(2)}), 
\end{equation}
which is Eq.~\rmref{recitadinatale} in degree two. Written in components $g(u,v) = g^{(1)}(u,v) \otimes g^{(2)}(u,v) \in H \otimes H$, this means:
\begin{equation}
\label{scatola2}
\begin{split}
& g^{(1)}(u_{(1)}, v_{(1)}) \otimes g^{(2)}(u_{(1)}, v_{(1)}) \otimes  u_{(2)} v_{(2)} 
\\
&
\quad
= u_{(1)}v_{(1)} \otimes g^{(1)}(u_{(2)}, v_{(2)}) \otimes  g^{(2)}(u_{(2)}, v_{(2)}),
\end{split}
\end{equation}
which is the crucial identity we are going to repeatedly use now:
the Gerstenhaber bracket of $f$ and $g$ reads as
$\{f,g\} = f \circ_1 g - g \circ_1 f - g \circ_2 f$,
where
\begin{equation}
\label{pelikan}
\begin{array}{rcl}
(f \circ_1 g)(u, v) &=& [\Delta f(u_{(3)}v_{(3)})][ S^{-1}(u_{(2)}v_{(2)}) \lact g(u_{(1)}, v_{(1)})], 
\\
(g \circ_1 f)(u, v) &=&  [g(u_{(3)}, v_{(3)})][ S^{-1}(u_{(2)}v_{(2)}) \lact f(u_{(1)}) \ract v_{(1)} \otimes  1],
\\
(g \circ_2 f)(u, v) &=&
 [g(u , v_{(4)})] [S^{-1}(v_{(3)}) \lact (v_{(1)} \otimes  f(v_{(2)}))].
\end{array}
\end{equation}
Using \rmref{recitadinatale} for $f$, the last one of these equations becomes:
$$
(g \circ_2 f)(u, v) 
=
 [g(u , v_{(4)})] [S^{-1}(v_{(3)}) \lact (f(v_{(1)}) \otimes  v_{(2)})]
=
 [g(u , v_{(4)})] [S^{-1}(v_{(2)}) f(v_{(1)}) \otimes  1],
$$
whereas using the cocycle condition \rmref{funzionepubblica} for $f$ along with \rmref{quipure} and \rmref{scatola2}, the first equation in \rmref{pelikan} becomes:
\begin{small}
\begin{equation*}
\begin{split}
&(f \circ_1 g)(u, v) 
\\
&= [f(u_{(3)})v_{(3)} \otimes u_{(4)} v_{(4)} + u_{(3)} v_{(3)} \otimes u_{(4)} f(v_{(4)})][ S^{-1}(u_{(2)}v_{(2)}) \lact g(u_{(1)}, v_{(1)})] 
\\
&= 
[f(u_{(3)})v_{(3)} \otimes u_{(4)} v_{(4)} + u_{(3)} v_{(3)} \otimes u_{(4)} f(v_{(4)})]
\\
&
\qquad
[ S^{-1}(g^{(2)}(u_{(2)}, v_{(2)})) \lact (u_{(1)} v_{(1)} \otimes g^{(1)}(u_{(2)}, v_{(2)}))]
\\
 &= 
[f(u_{(3)})v_{(3)} \otimes u_{(4)} v_{(4)} + u_{(3)} v_{(3)} \otimes u_{(4)} f(v_{(4)})]
\\
&
\qquad
[ S^{-1}(g^{(2)}(u_{(2)}, v_{(2)})) \lact (u_{(1)} v_{(1)} \otimes g^{(1)}(u_{(2)}, v_{(2)}))]
\\
&= 
[\Delta(u_{(3)} v_{(3)})] [ S^{-1}(g^{(2)}(u_{(2)}, v_{(2)})) \lact (f(u_{(1)}) v_{(1)} \otimes g^{(1)}(u_{(2)}, v_{(2)}))]
\\
&
\quad
+
[\Delta(u_{(3)} v_{(3)})] [ S^{-1}(g^{(2)}(u_{(2)}, v_{(2)})) \lact (u_{(1)} f(v_{(1)}) \otimes g^{(1)}(u_{(2)}, v_{(2)}))].
\end{split}
\end{equation*}
\end{small}
From this it is now clear that for having $\{f,g\} = 0$ it is enough to show:
\begin{footnotesize}
$$
[\Delta(u_{(2)}v_{(2)})][ S^{-1}(g^{(2)}(u_{(1)}, v_{(1)})) \lact (1 \otimes g^{(1)}(u_{(1)}, v_{(1)}))]
= [g(u_{(2)}, v_{(2)})][ S^{-1}(u_{(1)} v_{(1)}) \otimes 1],
$$
\end{footnotesize}
which is seen using repeatedly \rmref{trifonov}, \rmref{scatola1}, and the two-sided counitality of the underlying coalgebra, 
that is, $\gve(u_{(1)}) u_{(2)} = u_{(1)} \gve(u_{(2)})$: 
\begin{small}
\begin{equation*}
\begin{split}
&[\Delta(u_{(2)}v_{(2)})][ S^{-1}(g^{(2)}(u_{(1)}, v_{(1)})) \lact (1 \otimes g^{(1)}(u_{(1)}, v_{(1)}))]
\\
&= 
[\Delta(u_{(2)}v_{(2)})][ S^{-1}(g^{(2)}(u_{(1)}, v_{(1)})_{(2)}) \otimes  S^{-1}(g^{(2)}(u_{(1)}, v_{(1)})_{(1)}) g^{(1)}(u_{(1)}, v_{(1)})]
\\
&= 
[\Delta(u_{(2)}v_{(2)})][ S^{-1}(g^{(2)}(u_{(1)}, v_{(1)})) \otimes  \gve(g^{(1)}(u_{(1)}, v_{(1)}))]
\\
&=
[\Delta(g^{(2)}(u_{(2)},v_{(2)}))][ S^{-1}(g^{(1)}(u_{(2)}, v_{(2)})) \otimes  \gve(u_{(1)} v_{(1)})]
\\
&=
[\Delta(g^{(2)}(u,v))][ S^{-1}(g^{(1)}(u, v)) \otimes  1]
\\
&=
[\Delta(g^{(2)}(u_{(1)},v_{(1)}))\gve(u_{(2)}v_{(2)}][ S^{-1}(g^{(1)}(u_{(1)}, v_{(1)})) \otimes  1]
\\
&=
[\Delta(g^{(1)}(u_{(2)},v_{(2)}))\gve(g^{(2)}(u_{(2)}, v_{(2)})][ S^{-1}(u_{(1)} v_{(1)}) \otimes  1]
\\
&=
[g^{(1)}(u_{(2)},v_{(2)}) \otimes g^{(2)}(u_{(2)}, v_{(2)})_{(1)} \gve(g^{(2)}(u_{(2)}, v_{(2)})_{(2)})][ S^{-1}(u_{(1)} v_{(1)}) \otimes  1]
\\
&= [g(u_{(2)}, v_{(2)})][ S^{-1}(u_{(1)} v_{(1)}) \otimes 1].
\end{split}
\end{equation*}
\end{small}
Hence, we have $\{f, g\} = 0$ for a $1$-cocycle $f$ and a $2$-cocycle $g$, as claimed.

\section{Cocyclic and para-cocyclic modules}
\label{defilambda}
{\em Para-cocyclic $\K$-modules} \cite[p.~164]{GetJon:TCHOCPA} generalise 
cocyclic $\K$-modules by dropping the
condition that the cyclic operator
implements an action of
$\mathbb{Z}/(n+1)\mathbb{Z}$ on the
degree $n$ part. Thus, a para-cocyclic $\K$-module is a
cosimplicial $\K$-module
$(C^\bullet,\delta_\bull,\sigma_\bull)$
together with $\K$-linear maps 
$\tau_n : C^n \rightarrow C^n$
subject to
\begin{equation*}
\label{belleville}
\begin{array}{cc}
\begin{array}{rcll}
\tau_n \circ \gd_i &\!\!\!\!\!=&\!\!\!\!\! \left\{\!\!\!
\begin{array}{l}
\gd_{i-1}\circ \tau_{n-1} \\
 \gd_n 
\end{array}\right. & \!\!\!\!\!\!\!\!\! 
\begin{array}{l} \mbox{if} \ 1
\leq i \leq n, 
 \\ \mbox{if} \ i = 0,
 \end{array}
\end{array}
&
\begin{array}{rcll}
\tau_n \circ \sigma_i &\!\!\!\!\!=&\!\!\!\!\! \left\{\!\!\!
\begin{array}{l}
\sigma_{i-1} \circ \tau_{n+1} \\
 \sigma_n \circ \tau^2_{n+1} 
\end{array}\right. & \!\!\!\!\!\!\!\!\!
\begin{array}{l} \mbox{if} \ 1 \leq i
 \leq n,  
\\ \mbox{if} \ i = 0. 
\end{array} 
\end{array}
\end{array}
\end{equation*}
These relations imply that $\tau_n^{n+1}$
commutes with all the cofaces and
codegeneracies. 
If one additionally has 
$$
\tau_n^{n+1} = \id,
$$
the para-cocyclic module is called {\em cyclic}. 
As common, we also introduce the {\em cyclic coboundary} 
\begin{equation}
\label{e-mantra}
B := N \gs_{-1} (1 - (-1)^n \tau_{n+1}),
\end{equation}
where $\gs_{-1} := \gs_n \tau_{n+1}$ is the {\em extra degeneracy} and $N := \sum^n_{i=0} (-1)^{i+n} \tau^i_n$ is the {\em norm operator}.


\section{Algebraic operads}
\label{pamukkale}

\subsection{Operads and Gerstenhaber algebras}
\label{pamukkale1}
By a {\em non-$\gS$ operad} $\cO$ in the category 
of $\K$-modules we mean a sequence $\{\cO(n)\}_{n \geq 0}$ of $\K$-modules 
equipped with $\K$-bilinear operations $\circ_i: \cO(p) \otimes \cO(q) \to \cO({p+q-1})$, $i = 1, \ldots, p$,
that respect the following compatibility (or associativity) relations (see, for example, \cite[Def.~1.1]{Mar:MFO}): 
\begin{eqnarray}
\label{danton}
\nonumber
\gvf \circ_i \psi &=& 0 \qquad \qquad \qquad \qquad \qquad \! \mbox{if} \ p < i \quad \mbox{or} \quad p = 0, \\
(\varphi \circ_i \psi) \circ_j \chi &=& 
\begin{cases}
(\varphi \circ_j \chi) \circ_{i+r-1} \psi \qquad \mbox{if} \  \, j < i, \\
\varphi \circ_i (\psi \circ_{j-i +1} \chi) \qquad \hspace*{1pt} \mbox{if} \ \, i \leq j < q + i, \\
(\varphi \circ_{j-q+1} \chi) \circ_{i} \psi \qquad \mbox{if} \ \, j \geq q + i.
\end{cases}
\end{eqnarray}

An operad is called {\em unital} if there is an {\em identity} $\mathbb{1} \in \cO(1)$ such that 
$
\gvf \circ_i \mathbb{1} = \mathbb{1} \circ_1 \gvf = \gvf
$ 
for all $\gvf \in \cO(p)$ and $i \leq p$, and the operad is {\em with multiplication} if there exist elements called the {\em multiplication}  $\mu \in \cO(2)$ and the {\em unit} $e \in \cO(0)$ such that $\mu \circ_1 \mu = \mu \circ_2 \mu$ and 
$\mu \circ_1 e = \mu \circ_2 e = \mathbb{1}$. We denote such an object by the triple $(\cO, \mu, e)$.

An operad with multiplication  $(\cO, \mu, e)$ naturally defines a cosimplicial $\K$-module \cite{McCSmi:ASODHCC} given by $\cO^p := \cO(p)$ with faces and degeneracies $\gvf \in \cO(p)$ given by $\gd_0 \gvf := \mu \circ_2 \gvf$, $\gd_i \gvf := \gvf \circ_i \mu$ for $i = 1, \ldots, p$, and $\gd_{p+1} \gvf := \mu \circ_1 \gvf$, along with $\sigma_j(\gvf) := \gvf \circ_{j+1} e$ for $j = 0, \ldots, p-1$. Hence (by the Dold-Kan correspondence), one obtains a cochain complex,
which we will denote by the same symbol $\cO$, with $\cO(n)$ in degree $n$, 
 with differential $\gd: \cO(n) \to \cO({n+1})$ given by $\gd := \sum^{n+1}_{i=0} (-1)^i \gd_i$, and cohomology defined by 
$
H^\bullet(\cO) := H(\cO, \gd).
$ 

The {\em cup product}, on the other hand, is defined as 
\begin{equation}
\label{nuvole}
\psi \smallsmile \gvf := (\mu \circ_2 \psi) \circ_1 \gvf \in \cO(p+q),
\end{equation}  
for $\gvf \in \cO(p)$ and $\psi \in \cO(q)$, and then $(\cO, \smallsmile, \gd)$ yields a dg algebra. One also defines the {\em Gerstenhaber bracket} by means of 
\begin{equation}
\label{naemlichhier}
{\{} \varphi,\psi \}
:= \varphi\{\psi\} - (-1)^{(p-1)(q-1)} \psi\{\varphi\}, 
\end{equation}
where $\varphi\{\psi\} := \sum^{p}_{i=1}
        (-1)^{(q-1)(i-1)} \varphi \circ_i \psi  \in \cO({p+q-1})$ is the {\em brace} \cite{Ger:TCSOAAR, Get:BVAATDTFT}, the sum over all possible partial compositions.
 Notice that one has $\{\mu,\mu\} = 0$ as well as 
\begin{equation}
\label{immaginedellacitta`}
\gd \varphi = (-1)^{p+1} \{\mu, \varphi \}.
\end{equation}
Descending to cohomology, it is not too difficult to verify that $(H^\bullet(\cO), \smallsmile, \{\cdot, \cdot\})$ forms a Gerstenhaber algebra \cite{Ger:TCSOAAR, GerSch:ABQGAAD, McCSmi:ASODHCC}.

\subsection{Cyclic operads and Batalin-Vilkoviski\u\i\ algebras}
\label{pamukkale2}
A {\em cyclic operad} $(\cO, \tau)$, probably first introduced in \cite{GetKap:COACH}, is an operad $\cO$ in the sense of the preceding paragraph that comes with a degree-preserving linear map $\tau: \cO(p) \to \cO(p)$ for $p \in \N$ such that 
\begin{eqnarray}
\label{cycl1}
\tau(\gvf \circ_i \psi) &=& \tau\gvf \circ_{i-1} \psi, \qquad \mbox{for} \ p \geq 2, q \geq 0, 2 \leq i \leq p, \\
\label{cycl2}
\tau(\gvf \circ_1 \psi) &=& \tau \psi \circ_q \tau\gvf, \qquad \mbox{for} \ p \geq 1, q \geq 1, \\ 
\label{cycl3}
\tau^{p+1} &=& \id_{\cO(p)},
\end{eqnarray}
for each $\gvf \in O(p)$ and $\psi \in \cO(q)$. A {\em cyclic operad with multiplication}  $(\cO, \mu, e, \tau)$ as introduced in \cite{Men:BVAACCOHA}
is both an operad with multiplication and a cyclic operad, such that 
$$
\tau \mu = \mu.
$$
Again, descending to cohomology, it is a not too easy check that the $(H^\bullet(\cO), \smallsmile, \{\cdot, \cdot\}, B)$ forms a Batalin-Vilkoviski\u\i\ algebra 
\cite{Men:BVAACCOHA}, that is, a Gerstenhaber algebra whose bracket is ``generated'' by the cyclic coboundary $B$ in the sense of
$$
\{\gvf, \psi\} = - (-1)^\gvf B \gvf \smallsmile \psi - \gvf \smallsmile B \psi + (-1)^\phi B(\gvf \smallsmile \psi).
$$

\providecommand{\bysame}{\leavevmode\hbox to3em{\hrulefill}\thinspace}
\providecommand{\MR}{\relax\ifhmode\unskip\space\fi M`R }
\providecommand{\MRhref}[2]{%
  \href{http://www.ams.org/mathscinet-getitem?mr=#1}{#2}}
\providecommand{\href}[2]{#2}

\end{document}